\tikzstyle{V}=[fill=black,circle,scale=0.4, outer sep = 4pt]
\newtheorem{thm}{Theorem}[section]
\newtheorem{prop}[thm]{Proposition}
\newtheorem{cor}[thm]{Corollary}
\newtheorem{lemma}[thm]{Lemma}
\theoremstyle{remark}
\newtheorem{rmk}[thm]{Remark}
\newtheorem{example}[thm]{Example}
\theoremstyle{definition}
\newtheorem{defn}[thm]{Definition}
\numberwithin{equation}{section}
\newcommand{\clsp}{\overline{\operatorname{span}}}
\newcommand{\llangle}{\mathopen{\langle\!\langle}}
\newcommand{\rrangle}{\mathclose{\rangle\!\rangle}}
\newcommand{\bi}{\begin{itemize}}
\newcommand{\ei}{\end{itemize}}
\newcommand{\be}{\begin{enumerate}}
\newcommand{\ee}{\end{enumerate}}
\newcommand{\C}{\mathbb{C}}
\newcommand{\T}{\mathbb{T}}
\newcommand{\K}{\mathcal{K}}
\newcommand{\Ll}{\mathcal{L}}
\newcommand{\M}{\mathcal{M}}
\newcommand{\N}{\mathbb{N}}
\newcommand{\Z}{\mathbb{Z}}
\newcommand{\hatimes}{\mathbin{\widehat{\otimes}}}
\newcommand{\psX}{X}
\title[Homotopy of product systems, and $K$-theory]{Homotopy of product systems and $K$-theory of Cuntz--Nica--Pimsner algebras}
\author{James Fletcher, Elizabeth Gillaspy, and Aidan Sims}
\address[J. Fletcher]{School of Mathematics and Statistics, Victoria University of Wellington, PO Box 600, Wellington 6140, New Zealand}
\email{jef336@uowmail.edu.au}
\address[E. Gillaspy]{Department of Mathematical Sciences, University of Montana, 32 Campus Drive \#0864, Missoula, MT 59812, USA}
\email{elizabeth.gillaspy@mso.umt.edu}
\address[A. Sims]{School of Mathematics and Applied Statistics, University of Wollongong, Northfields Ave Wollongong NSW 2522, Australia}
\email{asims@uow.edu.au}
\date{\today}
\subjclass{46L05}
\keywords{Product system; Cuntz--Nica--Pimsner; higher-rank graph; $K$-theory}
\thanks{We thank the anonymous referee for helpful comments and suggestions. This research was
supported in part by Marsden grant 15-UOO-071 from the Royal Society of New Zealand, by the Small
Grants Program of the University of Montana, by the US National Science Foundation
(grant DMS-1800749 to E.G.), and by Australian Research Council grant DP180100595.}
\begin{document}

\begin{abstract}
We introduce the notion of a homotopy of product systems, and show that the Cuntz--Nica--Pimsner
algebras of homotopic product systems over $\N^k$ have isomorphic $K$-theory. As an application,
we give a new proof that the $K$-theory of a $2$-graph $C^*$-algebra is independent of the
factorisation rules, and we further show that the $K$-theory of any twisted $2$-graph
$C^*$-algebra is independent of the twisting 2-cocycle. We also explore applications to
$K$-theory for the $C^*$-algebras of single-vertex $k$-graphs, reducing the question of whether
the $K$-theory is independent of the factorisation rules to a question about path-connectedness
of the space of solutions to an equation of Yang--Baxter type.
\end{abstract}

\maketitle

\section{Introduction}
The close link between the structure of a directed graph $E$ and that of its associated $C^*$-algebra $C^*(E)$, together with the structural restrictions on the $C^*$-algebras of directed graphs (for example \cite{KPR}, if $C^*(E)$ is simple, it must be either approximately finite-dimensional or purely infinite), spurred Kumjian and Pask to introduce higher-rank graphs ($k$-graphs) in \cite{kp2} as more general combinatorial models for $C^*$-algebras.
Since then, substantial work has
gone into the study of the structure of higher-rank graph $C^*$-algebras.
Significant progress has been made on properties like simplicity \cite{RS}, ideal structure
\cite{Si1, Si2}, pure infiniteness \cite{Si2}, stable finiteness and quasidiagonality \cite{CaHS},
and topological dimension and real rank \cite{PSS2}. However, there has been little progress  on
calculating the $K$-theory of a $k$-graph $C^*$-algebra since the initial work of
Robertson and Steger \cite{RobSt} on higher-rank Cuntz--Krieger algebras, and Evans'
generalisation \cite{Evans} of their work to the $C^*$-algebras of 2-graphs (higher-rank graphs of rank 2).

Higher-rank graphs, or $k$-graphs, are a $k$-dimensional generalization of directed graphs.  One can view a $k$-graph as a directed graph with $k$ colours of edges, together with a {\em factorisation rule} which gives  an equivalence relation  on the set of paths in the graph: each path from a vertex $v$ to another vertex $w$ consisting of a red edge followed by a blue edge (a {\em red-blue path}) must be equivalent to precisely one blue-red path from $v$ to $w.$ While the $C^*$-algebra of a $k$-graph depends on both the underlying  edge-coloured directed graph (its {\em skeleton}) and on the factorisation rule,
Evans' theorem shows that the $K$-theory of the $C^*$-algebra of a row-finite $2$-graph with no
sources depends only on the skeleton.

While  $k$-graph $C^*$-algebras can be used to realise many classes of $C^*$-algebras up to Morita
equivalence \cite{PRRS, ruiz-sims-sorensen, BOS}), many fundamental examples of $C^*$-algebras,
such as the rotation algebras $A_\theta$, cannot be realised as $k$-graph $C^*$-algebras
\cite[Corollary 5.7]{EvansSims}.  However, \cite[Example 7.7]{kps3} established that $A_\theta$ is
a {\em twisted} $k$-graph $C^*$-algebra. Furthermore, \cite{KPS5} and \cite{gillaspy-PJM}
identified sufficient conditions under which the $K$-theory of a twisted $k$-graph $C^*$-algebra
$C^*(\Lambda, c)$ agrees with that of its untwisted counterpart. Thus, substantial evidence
suggests that (at least when $k=2$) the $K$-theory of a (twisted) $k$-graph $C^*$-algebra should
depend only on the $k$-graph's skeleton.

Evans' techniques from \cite{Evans} also suggest the more general possibility that the $K$-theory of any $k$-graph
$C^*$-algebra is independent of the  factorisation rules,
and this was cast as a conjecture in \cite{BOS}. Intriguingly, this question arose in \cite{BOS}
not through the study of $k$-graphs themselves, but because single-vertex $k$-graphs arise
naturally as a framework to describe the ``torsion subalgebra" $\mathcal{A_S}$ of the
$C^*$-algebra $\mathcal{Q}_S$ associated to the multiplicative action on $\Z$ of the subsemigroup
of $\N^\times$ generated by a finite set $S$ of primes. In the single vertex case, every skeleton
admits one set of factorisation rules under which the associated $C^*$-algebra is isomorphic to a tensor product
$\bigotimes_{p \in S} \mathcal{O}_p$ of Cuntz algebras, but it also admits many other factorisation rules. Since the $C^*$-algebras
$\mathcal{Q}_S$ and $\mathcal{A}_S$ are UCT Kirchberg algebras \cite{BOS}, the arguments of
\cite{BOS} reduce the question of understanding the $C^*$-algebras $\mathcal{Q}_S$ to the question
of understanding $K_*(\mathcal{A}_S)$. It is therefore very interesting, even in the 1-vertex
case, to develop new techniques for investigating the conjecture that the $K$-theory of a
$k$-graph $C^*$-algebra does not depend on its factorisation rules.

In this paper, we present an approach to this problem that reduces it to a question about
path-connectedness of the space of unitary solutions to an equation of Yang--Baxter type (see, for example,
\cite{jimbo}). Our idea is based on Elliott's technique \cite{elliott} for computing the $K$-theory
of a noncommutative torus. Elliott's technique exploits the fact that the noncommutative tori of a
fixed rank assemble as the fibres of the $C^*$-algebra of a higher-rank integer Heisenberg group.
The base space for the fibration is a torus, hence path connected, so any two fibres can be connected
by a path, putting them at either end of a $C([0,1])$-algebra. This $C([0,1])$-algebra can be
viewed as a crossed product of $C([0,1])$ by $\Z^k$, and hence as an iterated crossed-product
$C([0,1]) \rtimes \Z \rtimes \cdots \rtimes \Z$. Since evaluation at each point in $[0,1]$ induces
an isomorphism $K_*(C([0,1])) \to K_*(\C)$, it therefore suffices, by induction, to prove that if
$A$ is a $C([0,1])$-algebra such that the quotient maps $A\mapsto A_t$ induce isomorphisms in
$K$-theory, and if $\alpha$ is an automorphism of $A$ that respects the $C([0,1])$-algebra
structure, then $A \rtimes \Z$ is again a $C([0,1])$-algebra in which the evaluation maps induce
isomorphisms in $K$-theory. Elliott proved this by applying the Five Lemma to the
Pimsner--Voiculescu exact sequence for the crossed product. A similar technique was used in
\cite{KPS5, gillaspy-PJM} to examine the $K$-theory of twisted $k$-graph $C^*$-algebras, and show that homotopic
cocycles yield twisted $k$-graph $C^*$-algebras with isomorphic $K$-theory.

Here, we employ a similar technique, but expand the notion of homotopy to which our results apply
by regarding product systems of Hilbert modules as generalised cocycles. Product systems (see
Section~\ref{sec:product-systems} below) and their $C^*$-algebras were introduced by Fowler in
\cite{Fowler}.  These families $\{ \psX_p\}_{p \in P}$ of Hilbert $A-A$-bimodules, indexed by a
semigroup $P$ and carrying a multiplication compatible with  that of $P$, give rise to a class of
$C^*$-algebras including $k$-graph $C^*$-algebras and crossed products by endomorphisms.

We introduce homotopies of product systems with coefficient algebra $A$, defined as product
systems $X$ with coefficient algebra $C([0,1], A)$ in which the canonical left and right actions
of $C([0,1])$ on each fibre coincide. Proposition~\ref{prp:homotopy equiv rel} verifies that this
notion of homotopy defines an equivalence relation on (isomorphism classes of) product systems,
while Lemma~\ref{lem:bundle of product systems} establishes that a homotopy $X$ of  product
systems decomposes into fibres $X^t$ indexed by $t \in [0,1]$ such that each $X^t$ is itself a
product system over $A$. Lemma~\ref{lem:bundle of product systems} also establishes  that if the
left action on a homotopy of product systems is injective and/or by compacts then the same is true
in each fibre. Using this, we verify in Proposition~\ref{pr:evaluation-maps-commute} that for
product systems over a quasi-lattice ordered semigroup in which each pair of elements has an upper
bound, the Cuntz--Nica--Pimsner algebra of a homotopy of product systems in which the left action
is injective and by compacts itself decomposes as a $C([0,1])$-algebra whose fibres are the
Cuntz--Nica--Pimsner algebras of the fibres of the homotopy. We then focus on product systems over
$\N^k$, and use Elliott's strategy described above, replacing the Pimsner--Voiculescu exact
sequence with Pimsner's six-term exact sequence for Cuntz--Pimsner algebras \cite{Pimsner}, to
show in Theorem~\ref{thm:htopy-KK} that if $X$ is a homotopy of product systems over $\N^k$, then
the quotient maps $\mathcal{NO}_X \to \mathcal{NO}_{X^t}$ all induce isomorphisms in $K$-theory.

We conclude the paper by applying this machinery to the setting of $k$-graphs. Given a $k$-graph
$\Lambda$, its $C^*$-algebra $C^*(\Lambda)$ is the Cuntz--Nica--Pimsner algebra of a product
system $X(\Lambda)$ over $\N^k$ (\cite[Proposition 5.4]{sims--yeend}; see also \cite{fs, RaeS}).
In fact, if $E_i$ is the graph with vertices $\Lambda^0$ and edges the edges of colour $i$ in
$\Lambda$, then the coordinate fibre of $X(\Lambda)$ over the $i$th generator $e_i$ of $\N^k$ is
the graph module $X(E_i)$. Thus, given two $k$-graphs with the same skeleton but different
factorisation rules, or the same skeleton and factorisation rules but different 2-cocycles, one naturally asks
whether  there is a homotopy of product systems linking them. In Section \ref{sec:k-graphs}, we
reduce this question to a question about the path-connectedness of a certain space of systems of
unitary matrices satisfying a cocycle-like condition. To be precise, we first show (using results
of \cite{fs}) that product systems over $\N^k$ with coordinate fibres isomorphic to the graph
modules $X(E_i)$ are determined by systems of unitary matrices $U_{i,j}(v,w) : \C^{v E^1_jE^1_i w}
\to \C^{v E^1_iE^1_j w}$, indexed by pairs $v,w$ of vertices and pairs $1 \le i < j \le k$ of
indices, that collectively satisfy a cocycle condition reminiscent of the Yang--Baxter equation.
We call such a system a {\em unitary cocycle} for $E$. For example, Proposition~\ref{prp:k-graph
to cocycle} establishes that for every row-finite $k$-graph $\Lambda$ with no sources, and every
$\T$-valued 2-cocycle $c$ on $\Lambda$, there is a unitary cocycle so that the
Cuntz--Nica--Pimsner algebra of the associated product system coincides with the twisted $k$-graph
$C^*$-algebra $C^*(\Lambda, c)$. We then show that a family $\{U^t: t \in [0,1]\}$ of unitary
cocycles for $E$ determines a homotopy of product systems if, for each fixed $u,v,i,j$, the map $t
\mapsto U^t_{i,j}(u,v)$ is continuous; see Proposition \ref{prp:path of cocycles}. We call such a
family a {\em continuous path} of unitary cocycles for $E$. Using our main result (Theorem
\ref{thm:htopy-KK}), we then deduce in Corollary~\ref{cor:connected->K-theory} that if
$\Lambda,\Gamma$ are $k$-graphs with the same skeleton $E$, and $c, c'$ are $\T$-valued 2-cocycles
on $\Lambda$ and $\Gamma$, and if the unitary cocycles for $E$ determined by $(\Lambda, c)$ and
$(\Gamma, c')$ are connected by a continuous path of unitary cocycles for $E$, then $C^*(\Lambda,
c)$ and $C^*(\Gamma, c')$ have isomorphic $K$-theory.

We deduce two main consequences. First, if $E = (E_1, E_2)$ is a skeleton of a 2-graph, then the cocycle
condition for a unitary cocycle for $E$ is vacuous, and so a unitary cocycle is simply a system of
unitary isomorphisms $U_{1,2}(v,w)$, indexed by $v,w \in E^0$, of finite-dimensional vector
spaces. Since the space of $n \times n$ complex unitary matrices is path-connected, we deduce that
the space of unitary cocycles for $E$ is path connected, and therefore that any two product
systems over $\N^2$ with coordinate fibres $X(E_1)$ and $X(E_2)$ are connected by a homotopy of
product systems. Hence the $K$-theory of the Cuntz--Nica--Pimsner algebra of any such product
system is determined by Evans' calculation \cite{Evans}
of $K$-theory for $2$-graph $C^*$-algebras. In
particular, if $\Lambda$ and $\Gamma$ are row-finite $2$-graphs with no sources and the same
skeleton, then $K_*(C^*(\Lambda, c)) \cong K_*(C^*(\Gamma, c'))$ for any $\T$-valued cocycles $c$
on $\Lambda$ and $c'$ on $\Lambda'$. Second, given positive integers $n_1, \dots, n_k$, a unitary
cocycle for the $k$-skeleton $(E_1, \dots E_k)$ in which each graph $E_i$ has one vertex and $n_i$ edges
reduces to a family of unitary matrices $U_{i,j} : \C^{n_j} \otimes \C^{n_i} \to \C^{n_i} \otimes
\C^{n_j}$, $i < j$, satisfying the Yang--Baxter type equations
\[
(U_{i,j} \otimes 1_l)(1_j \otimes U_{i,l})(U_{j,l} \otimes 1)
    = (1_i \otimes U_{j,l})(U_{i,l} \otimes 1_j)(1_l \otimes U_{i,j})
\]
for $1 \le i < j < l \le k$. In particular, if the collection of all such families of unitary
matrices is connected, then any twisted $C^*$-algebra of any $1$-vertex $k$-graph $\Lambda$ in
which each $|\Lambda^{e_i}| = n_i$ has $K$-theory isomorphic to $K_*(\bigotimes^k_{i=1}
\mathcal{O}_{n_i})$.

\smallskip

The structure of the paper is as follows. We summarize, in Section \ref{sec:background}, the
relevant background and known results on Hilbert bimodules and  product systems. In
Section~\ref{sec:main} we introduce the notion of homotopy of product systems, and prove our main
result, Theorem~\ref{thm:htopy-KK}, which states that the Cuntz--Nica--Pimsner algebras of
homotopic product systems have isomorphic $K$-theory. In Section~\ref{sec:k-graphs} we present
background on $k$-graphs and their twisted $C^*$-algebras, introduce the notion of a unitary
cocycle for a $k$-skeleton $E$, and use this to apply our earlier results to twisted $k$-graph
$C^*$-algebras.

Throughout the paper, we use the word ``homomorphism'' to denote a $*$-preserving, multiplicative, norm-decreasing linear map between $C^*$-algebras.

\section{Background on Hilbert bimodules and associated \texorpdfstring{$C^*$}{C*}-algebras}\label{sec:background}

We give a quick summary of the structure of Hilbert modules and their $C^*$-algebras. For details
on Hilbert modules, see \cite{Lance, tfb} and for details on the associated $C^*$-algebras, see
\cite{Pimsner, FR, Katsura}.

Let $A$ be a $C^*$-algebra. An {\em inner product $A$-module} is a complex vector space $X$ equipped
with a map $\langle \cdot, \cdot \rangle_A:X\times X\rightarrow A$, linear in its second argument,
and a right action of $A$, such that for any $x,y\in X$ and $a\in A$, we have
\begin{enumerate}
\item[(i)] $\langle x,y\rangle_A=\langle y,x\rangle_A^*$;
\item[(ii)] $\langle x,y\cdot a \rangle_A=\langle x,y\rangle_Aa$;
\item[(iii)] $\langle x,x\rangle_A\geq 0$ in $A$; and
\item[(iv)] $\langle x,x\rangle_A=0$ if and only if $x=0$.
\end{enumerate}
By \cite[Proposition~1.1]{Lance}, the formula $\|x\|_X:=\|\langle x,x\rangle_A\|_A^{1/2}$ defines
a norm on $X$, and we say that $X$ is a Hilbert $A$-module if $X$ is complete with respect to this
norm.

We say that a map $T:X\rightarrow X$ is {\em adjointable} if there exists a map $T^*:X\rightarrow
X$ such that $\langle Tx, y\rangle_A=\langle x, T^*y\rangle_A$ for all $x, y\in X$. Every
adjointable operator $T$ is automatically linear and continuous, and the adjoint $T^*$ is unique.
Equipping the collection of adjointable operators on $X$, denoted by $\mathcal{L}(X)$, with the
operator norm gives a $C^*$-algebra. For each $x,y\in X$, the formula $\Theta_{x,y}(z) := x\cdot
\langle y, z\rangle_A$ defines an adjointable operator with adjoint $\Theta_{x,y}^* =
\Theta_{y,x}$. The closed subspace $\mathcal{K}(X):=\overline{\mathrm{span}}\{\Theta_{x,y}:x,y\in
X\}$ is an essential ideal of $\mathcal{L}(X)$, whose elements we refer to as compact operators.

A {\em Hilbert $A$-bimodule} is a Hilbert $A$-module $X$ equipped with a left action of $A$ by
adjointable operators (i.e.~a homomorphism $\phi:A\rightarrow \mathcal{L}(X)$). We frequently
write $a\cdot x$ for $\phi(a)(x)$. Letting $A$ act on itself by left and right multiplication, and
defining an $A$-valued inner product on $A$ by $\langle a,b\rangle_A:=a^*b$, gives a Hilbert
$A$-bimodule that we denote by ${}_A A_A$.

By the Hewitt--Cohen factorisation theorem, every Hilbert $A$-bimodule $X$ is automatically
right-nondegenerate in the strong sense that $X = X \cdot A$. So we will say that the Hilbert
$A$-bimodule is \emph{nondegenerate} if the homomorphism $\phi : A \to \Ll(X)$ that implements the
left action is nondegenerate; that is if $X = \overline{\phi(A)X}$.

\begin{example}\label{ex:graph module}
Let $E = (E^0, E^1, r, s)$ be a row-finite graph with no sources. Let $A := C_0(E^0)$. Define
$\langle \cdot, \cdot \rangle_A$ on $C_c(E^1)$ by $\langle \xi, \eta\rangle_A(v) = \sum_{e \in E^1
v} \overline{\xi(e)}\eta(e)$, and define left and right actions of $A$ on $C_c(E^1)$ by $(a \cdot
\xi \cdot b)(e) = a(r(e))\xi(e)b(s(e))$. Then $\|\xi\| := \|\langle \xi, \xi\rangle_A\|^{1/2}$
defines a norm on $C_c(E^1)$. The completion $X = X(E)$ of $C_c(E)$ in this norm is a Hilbert
$A$-bimodule with $A$-actions extending those on $C_c(E^1)$. This is called the \emph{graph
bimodule} of $E$.
\end{example}

A {\em Toeplitz representation} $(\psi,\pi)$ of a Hilbert $A$-bimodule $X$ in a $C^*$-algebra $B$
consists of a linear map $\psi:X\rightarrow B$ and a homomorphism $\pi:A\rightarrow B$ such that
\begin{enumerate}
\item[(i)] $\psi(a\cdot x)=\pi(a)\psi(x)$ for each $a\in A$, $x\in X$;
\item[(ii)] $\psi(x)^*\psi(y)=\pi(\langle x,y\rangle_A)$ for each $x,y\in X$.
\end{enumerate}
These relations imply that $\psi(x \cdot a) = \psi(x)\pi(a)$ for all $a \in A$ and $x \in X$, and
also that $\psi$ is norm-decreasing, and is isometric if and only $\pi$ is injective on $\langle
X, X\rangle_A$. The universal $C^*$-algebra for Toeplitz representations of $X$ is called the
{\em Toeplitz algebra} of $X$. We write $\mathcal{T}_X$ for this $C^*$-algebra and denote the universal
Toeplitz representation of $X$ that generates it by $(i_X,i_A)$. By
\cite[Proposition~8.11]{Raeburn}, if $(\psi, \pi)$ is a Toeplitz representation of $X$ in $B$,
then there is a homomorphism $(\psi,\pi)^{(1)}:\mathcal{K}(X)\rightarrow B$ such that
$(\psi,\pi)^{(1)}\left(\Theta_{x,y}\right)=\psi(x)\psi(y)^*$ for all $x,y\in X$. We say that a
Toeplitz representation $(\psi,\pi)$ is {\em Cuntz--Pimsner covariant} if
$(\psi,\pi)^{(1)}(\phi(a))=\pi(a)$ for all $a\in J_X:=\phi^{-1}(\mathcal{K}(X))\cap
\mathrm{ker}(\phi)^\perp$.\footnote{Given an ideal $I$ of a $C^*$-algebra $A$, we write $I^\perp$
for the annihilator $\{a \in A : aI = 0\}$.} We call the universal $C^*$-algebra for
Cuntz--Pimsner covariant Toeplitz representations of $X$ the Cuntz--Pimsner algebra of $X$. We
denote this $C^*$-algebra by $\mathcal{O}_X$ and write $(j_X,j_A)$ for the universal
Cuntz--Pimsner covariant Toeplitz representation of $X$.

The $K$-theory of the Toeplitz algebra of a Hilbert $A$-bimodule $X$ is easy to compute: by
\cite[Proposition~8.2]{Katsura} the homomorphism $i_A:A\rightarrow \mathcal{T}_X$ induces an
isomorphism at the level of $K$-theory (in fact if $A$ is separable and $X$ is countably
generated, then this homomorphism induces a $KK$-equivalence \cite[Theorem~4.4]{Pimsner}). In
general, the $K$-theory of the Cuntz--Pimsner algebra $\mathcal{O}_X$ is much more complicated;
the primary tool for computing it is the following $6$-term exact sequence
\cite[Theorem~8.6]{Katsura}
\[
\begin{tikzpicture}
    \node (00) at (0,0) {$K_1(\mathcal{O}_X)$};
    \node (40) at (4,0) {$K_1(A)$};
    \node (80) at (8,0) {$K_1(J_X)$.};
    \node (82) at (8,2) {$K_0(\mathcal{O}_X)$};
    \node (42) at (4,2) {$K_0(A)$};
    \node (02) at (0,2) {$K_0(J_X)$};
    \draw[-stealth] (02)-- node[above] {$\iota_*-[X]$} (42);
    \draw[-stealth] (42)-- node[above] {$(j_A)_*$} (82);
    \draw[-stealth] (82)-- node[right] {$$} (80);
    \draw[-stealth] (80)-- node[below] {$\iota_*-[X]$} (40);
    \draw[-stealth] (40)-- node[below] {$(j_A)_*$} (00);
    \draw[-stealth] (00)-- node[left] {$$} (02);
\end{tikzpicture}
\]

Given Hilbert $A$-bimodules $X$ and $Y$, we can form the balanced tensor product $X\otimes_A Y$
as follows (see \cite{Lance} or \cite{tfb}). We endow the algebraic tensor product $X \odot Y$
with the canonical actions of $A$ given by $a \cdot (x \odot y) \cdot b = (a \cdot x) \odot (y
\cdot b)$, and with the sesquilinear form $[x \odot y, x' \odot y']_A := \langle y, \langle x,
x'\rangle_A \cdot y'\rangle_A$. The space $N = \{\zeta \in X \odot Y : [\zeta, \zeta]_A = 0\}$ is
a closed submodule, and $[\cdot, \cdot]_A$ descends to an inner-product $\langle \cdot,
\cdot\rangle_A$ on $(X \odot Y)/N$. The balanced tensor product $X \otimes_A Y$ is the completion
of $(X \odot Y)/N$ in the inner-product norm, which is itself a Hilbert $A$-bimodule. We write $x
\otimes y$ for the image $x \odot y + N$ of $x \odot y$ in $X \otimes_A Y$. A simple calculation
shows that $[x\cdot a \odot y, \zeta]_A = [x \odot a\cdot y, \zeta]$ for all $x \in X$, $y \in Y$,
$a \in A$ and $\zeta \in X \odot Y$, and it follows that $x \cdot a \otimes y = x \otimes a \cdot
y$ for all $x,y,a$. If $\phi : A \to \Ll(Y)$ is the homomorphism implementing the left action, we
sometimes write $X \otimes_\phi Y$ rather than $X \otimes_A Y$.

We define the balanced tensor powers of $X$ as follows: $X^{\otimes 0}:={}_A A_A$, $X^{\otimes
1}:=X$, and $X^{\otimes n}:=X\otimes_A X^{\otimes n-1}$ for $n\geq 2$. Given Hilbert $A$-bimodules
$X$ and $Y$ and an adjointable operator $S\in \mathcal{L}(X)$, the formula $x\otimes y \mapsto
(Sx)\otimes y$ extends to an adjointable operator on all of $X\otimes_A Y$, which we denote by
$S\otimes_A 1_Y$, or by $S \otimes_\phi 1_Y$ if $\phi : A \to \Ll(X)$ is the homomorphism
implementing the left action.

We can also combine a collection $\{X_j:j\in J\}$ of Hilbert $A$-bimodules by forming their direct
sum. We define $\bigoplus_{j\in J}X_j$ to be the set of sequences $(x_j)_{j\in J}$ with $x_j\in
X_j$, such that $\sum_{j\in J}\langle x_j, x_j\rangle_A$ converges in $A$. We define an $A$-valued
inner product on $\bigoplus_{j\in J}X_j$ by $\left\langle (x_j)_{j\in J}, (y_j)_{j\in
J}\right\rangle_A:=\sum_{j\in J}\langle x_j, y_j\rangle_A$, which converges by
\cite[Proposition~1.1]{Lance}. It follows that $\bigoplus_{j\in J}X_j$ is complete with respect to
the norm induced by this inner product. Letting $A$ act componentwise on the left and right then
gives $\bigoplus_{j\in J}X_j$ the structure of a Hilbert $A$-bimodule.

\subsection{Product systems of Hilbert bimodules and their associated \texorpdfstring{$C^*$}{C*}-algebras}
\label{sec:product-systems}
A {\em quasi-lattice ordered group} $(G,P)$ consists of a group $G$ and a subsemigroup $P$ of $G$ such
that $P\cap P^{-1}=\{e\}$, and such that, with respect to the partial order on $G$ given by $p\leq
q \Leftrightarrow p^{-1}q\in P$, any two elements $p,q\in G$ which have a common upper bound in
$P$ have a least common upper bound $p \vee q$ in $P$. We write $p\vee q=\infty$ if $p$ and $q$
have no common upper bound in $P$, and $p\vee q<\infty$ otherwise. We say that $P$ is directed if
$p\vee q<\infty$ for every $p,q\in P$.

Let $(G,P)$ be a quasi-lattice ordered group and $A$ a $C^*$-algebra. A {\em product system} over
$P$ with coefficient algebra $A$ is a semigroup $\psX=\bigsqcup_{p\in P}\psX_p$ such that:
\begin{enumerate}
\item[(i)] $\psX_e = {_A A_A}$, and $\psX_p\subseteq \psX$ is a Hilbert $A$-bimodule for each
    $p\in P$;
\item[(ii)] for each $p, q \in P$ with $p \not= e$, there exists a Hilbert $A$-bimodule
    isomorphism $M_{p,q}:\psX_p\otimes_A \psX_q\rightarrow \psX_{pq}$ which is associative in
    the sense that
    \[ M_{r, pq} \circ 1_{\psX_r} \otimes M_{p,q} = M_{rp, q} \circ M_{r,p} \otimes 1_{\psX_q}\]
     for each $p,q, r\in P$;
     and
\item[(iii)] multiplication in $\psX$ by elements of $\psX_e=A$ implements the left and right
    actions of $A$ on each $\psX_p$; that is $xa=x\cdot a$ and $ax=a\cdot x$ for each $p\in P$,
    $a\in A$, and $x\in \psX_p$.
\end{enumerate}
We will often write $M_{p,q}(x \otimes y) =: xy$.

We will say that the product system $\psX$ is \emph{nondegenerate} if each fibre $\psX_p$ is
nondegenerate as a Hilbert $A$-bimodule; that is, if $\psX_p = \overline{\phi_p(A)\psX_p}$ for all
$p$. Note that since the multiplication maps $\psX_e \times \psX_p \to \psX_p$ are given by the
left action, the product system $\psX$ is nondegenerate if and only if condition~(ii) above also
holds when $p = e$.

We write $\phi_p:A\rightarrow \mathcal{L}(\psX_p)$ for the homomorphism that implements the left
action of $A$ on $\psX_p$. Since multiplication in $\psX$ is associative,
$\phi_{pq}(a)(xy)=(\phi_p(a)x)y$ for all $p,q\in P$, $a\in A$, $x\in \psX_p$, and $y\in \psX_q$.
We write $\langle \cdot,\cdot\rangle_A^p$ for the $A$-valued inner-product on $\psX_p$.

For each $p,q\in P$, we define a homomorphism
$\iota_p^{pq}:\mathcal{L}\left(\psX_p\right)\rightarrow \mathcal{L}\left(\psX_{pq}\right)$ by
\[
\iota_p^{pq}(S):=M_{p,q}\circ (S\otimes_A 1_{\psX_q})\circ M_{p,q}^{-1}
\]
for each $S\in \mathcal{L}\left(\psX_p\right)$. Equivalently, $\iota_p^{pq}$ is characterised by
the formula $\iota_p^{pq}(S)(xy)=(Sx)y$ for each $S\in \mathcal{L}\left(\psX_p\right)$, $x\in
\psX_p$, $y\in \psX_q$. We define $\iota_p^r: \mathcal{L}\left(\psX_p\right)\rightarrow
\mathcal{L}\left(\psX_r\right)$ to be the zero map whenever $p\not\leq r$. We say that $\psX$ is
{\em compactly aligned} if $\iota_p^{p\vee q}(S)\iota_q^{p\vee q}(T)\in \mathcal{K}(\psX_{p\vee q})$
whenever $S\in \mathcal{K}(\psX_p)$ and $T\in \mathcal{K}(\psX_q)$ for some $p,q\in P$ with $p\vee
q<\infty$. By \cite[Proposition~5.8]{Fowler}, if $A$ acts compactly on each fibre of $\psX$ (i.e.
$\phi_p(A)\subseteq \mathcal{K}(\psX_p)$ for each $p\in P$) or $(G,P)$ is totally ordered by
$\leq$, then $\psX$ is automatically compactly aligned.

A representation of a compactly aligned product system $\psX$ over $P$ in a $C^*$-algebra $B$ is a
map $\psi:\psX\rightarrow B$ such that
\begin{enumerate}
\item[(i)] each $\psi_p:=\psi|_{\psX_p}$ is a linear map, and $\psi_e$ is a homomorphism;
\item[(ii)] $\psi_p(x)\psi_q(y)=\psi_{pq}(xy)$ for all $p,q\in P$ and $x\in \psX_p$, $y\in
    \psX_q$; and
\item[(iii)] $\psi_p(x)^*\psi_p(y)=\psi_e(\langle x,y\rangle_A^p)$ for all $p\in P$ and $x,y\in
    \psX_p$.
\end{enumerate}
For each $p\in P$, it follows that $(\psi_p,\psi_e)$ is a Toeplitz representation of the Hilbert
$A$-bimodule $\psX_p$. We write $\psi^{(p)}$ for the resulting homomorphism
$(\psi_p,\psi_e)^{(1)}:\mathcal{K}\left(\psX_p\right)\rightarrow B$.

If $\rho : B \to C$ is a homomorphism of $C^*$-algebras and $\psi : \psX \to B$ is a
representation, then $\rho \circ \psi : \psX \to C$ is a representation. For $x,y \in \psX_p$, we
have $(\rho \circ \psi)^{(p)}(\theta_{x,y}) = \rho(\psi(x)) \rho(\psi(y))^* =
\rho(\psi(x)\psi(y)^*) = \rho \circ \psi^{(p)}(\theta_{x,y})$. So linearity and continuity give
\begin{equation}\label{eq:induced composition}
(\rho \circ \psi)^{(p)} = \rho \circ \psi^{(p)}.
\end{equation}

We say that $\psi$ is {\em Nica covariant} if, for any $p,q\in P$ and $S\in \mathcal{K}(\psX_p)$, $T\in
\mathcal{K}(\psX_q)$,
\begin{align*}
\psi^{(p)}(S)\psi^{(q)}(T)=
\begin{cases}
\psi^{(p\vee q)}\left(\iota_p^{p\vee q}(S)\iota_q^{p\vee q}(T)\right) & \text{if $p\vee q<\infty$}\\
0 & \text{otherwise.}
\end{cases}
\end{align*}
We denote the universal $C^*$-algebra for Nica covariant representations by $\mathcal{NT}_\psX$
(the {\em Nica--Toeplitz algebra} of $\psX$) and write $ \{ i_p :\psX_p \rightarrow \mathcal{NT}_\psX\}_{p \in P}$ for
its generating representation. It then follows from relations (i)--(iii) that
$\mathcal{NT}_\psX=\overline{\mathrm{span}}\left\{i_\psX(x)i_\psX(y)^*:x,y\in \psX\right\}$.

For representations of product systems we also have a notion of Cuntz--Pimsner covariance (first
introduced by Sims and Yeend in \cite{sims--yeend}).
To formulate this covariance relation we
first require some additional definitions.
We start by defining a collection of ideals of $A$ by
setting $I_e:=A$ and $I_p:=\bigcap_{e<q\leq p}\ker(\phi_q)$ for each $p\in P\setminus \{e\}$. For
each $p\in P$, we then define a Hilbert $A$-bimodule
\[
\widetilde{\psX}_p:=\bigoplus_{q\leq p} \psX_q\cdot I_{q^{-1}p},
\]
and write $\widetilde{\phi}_p:A\rightarrow \mathcal{L}\big(\widetilde{\psX}_p\big)$ for the
homomorphism defined by
\[
\big(\widetilde{\phi}_p(a)(x)\big)_q:=\phi_q(a)(x_q)
\quad
\text{for $a\in A$, $x\in \widetilde{\psX}_p$, $q\leq p$.}
\]
We say that $\psX$ is $\widetilde{\phi}$-injective if each homomorphism $\widetilde{\phi}_p$ is
injective. For various examples (in particular, for product systems over $\N^k$, or where each
$\phi_p$ is injective) $\widetilde{\phi}$-injectivity is automatic by
\cite[Lemma~3.15]{sims--yeend}.
For each $p,q\in P$, we  have a homomorphism
$\widetilde{\iota}_p^{\,q}:\mathcal{L}(\psX_p)\rightarrow \mathcal{L}\big(\widetilde{\psX}_q\big)$
characterised by the formula
\[
\big(\widetilde{\iota}_p^{\,q}(S)(x)\big)_r=\iota_p^r(S)(x_r)
\quad
\text{for $S\in \mathcal{L}(\psX_p)$, $x\in \widetilde{\psX}_p$, $r\leq q$.}
\]
 Given a predicate statement $\mathcal{P}(s)$ (where
$s\in P$), we say that $\mathcal{P}(s)$ is true for large $s$ if, given any $p\in P$, there exists
$q\geq p$, such that $\mathcal{P}(s)$ is true whenever $s\geq q$. We then say that a Nica covariant representation  $\psi$ of a compactly aligned $\widetilde \phi$-injective product system $\psX$ is
{\em Cuntz--Pimsner covariant} if, for any finite set $F\subseteq P$ and any choice of compact operators
$\left\{T_p\in \mathcal{K}\left(\psX_p\right):p\in F\right\}$, we have that
\[
\sum_{p\in F} \widetilde{\iota}_p^{\,s} (T_p)=0\in \mathcal{L}\big(\widetilde{\psX}_s\big) \quad \text{for large $s$} \quad \Rightarrow \quad \sum_{p\in F} \psi^{(p)}(T_p)=0.
\]
We say that $\psi$ is Cuntz--Nica--Pimsner covariant if it is both Nica covariant and
Cuntz--Pimsner covariant.

If $P$ is directed and each $\phi_p$ is injective and takes values in $\K(X_p)$, then
\cite[Corollary~5.2]{sims--yeend} shows that a representation $\psi$ of $\psX$ is
Cuntz--Nica--Pimsner covariant if and only if $\psi^{(p)}\circ\phi_p=\psi_e$ for each $p\in P$.

We denote the universal $C^*$-algebra for  Cuntz--Nica--Pimsner covariant covariant
representations by $\mathcal{NO}_\psX$ (the {\em Cuntz--Nica--Pimsner algebra} of $\psX$) and
write $\{j_p:\psX_p \rightarrow \mathcal{NO}_\psX\}_{p \in P}$ for its generating representation.
It follows that $\mathcal{NO}_\psX$ is a quotient of $\mathcal{NT}_\psX$, and we write
$q:\mathcal{NT}_\psX\rightarrow \mathcal{NO}_\psX$ for the quotient homomorphism (characterised by
$q\circ i_\psX=j_\psX$). We have
$\mathcal{NO}_\psX=\overline{\mathrm{span}}\left\{j_\psX(x)j_\psX(y)^*:x,y\in \psX\right\}$.

\section{Homotopies of product systems}\label{sec:main}

To define our notion of a homotopy of product systems, we begin with a little background. Suppose
that $A$ is a $C^*$-algebra, and that $X$ is a right-Hilbert $(A \otimes C([0,1]))$-module.
Identifying $A \otimes C([0,1])$ with $C([0,1], A)$ as usual, Cohen factorisation shows that each
$x \in X$ can be written as $x = y \cdot f$ for some $y \in X$ and $f \in C([0,1], A)$. It follows
that there is an action of $C([0,1])$ on the right of $X$ such that $(x \cdot f) \cdot g = x \cdot
(fg)$ for all $x \in X$, $f \in C([0,1], A)$ and $g \in C([0,1])$.

With $X$ and $A$ as above, for each $t \in [0,1]$, we write $I_t$ for the ideal $A \otimes
C_0([0,1] \setminus\{t\}) \subseteq A \otimes C([0,1])$, and we write $X^t$ for the quotient
right-Hilbert $A$-module $X/(X \cdot I_t)$. It is standard that there is a unique topology on
$\mathcal{X} := \bigsqcup_t X^t$ under which $\mathcal{X}$ is a continuous Banach bundle in which,
for each $x \in X$, the map
\[
\gamma_x : t \mapsto x + X \cdot I_t
\]
is continuous. With respect to this topology, the map $X \owns x \mapsto \gamma_x \in
\Gamma([0,1], \mathcal{X})$ is an isomorphism of $X$ onto the module of continuous sections of
$\mathcal{X}$.\footnote{For example, one could prove this by applying
\cite[Theorem~C.26]{Williams} to the linking algebra $\big(\begin{smallmatrix} \K(X) & X \\ X^* &
A\end{smallmatrix}\big)$, and then take the sub-bundle of the resulting bundle of $C^*$-algebras
consisting of sections corresponding to elements of the form $\big(\begin{smallmatrix} 0 & \xi
\\ 0 & 0\end{smallmatrix}\big)$. This is a continuous, rather than upper-semicontinous, bundle
because $t \mapsto \|f(t)\|$ is continuous for $f \in C([0,1],A)$, and so for $x \in X$, the map
$t \mapsto \|x + X \cdot I_t\|^2 = \|\langle x, x\rangle_{A \otimes C([0,1])}(t)\|$ is
continuous.}

Now suppose that $X$ is a nondegenerate Hilbert bimodule over $A \otimes C([0,1])$. Then $\phi$
extends to a homomorphism from $\M(A \otimes C([0,1])) = \M(A) \otimes C([0,1])$ to $\Ll(X)$, and
in particular determines a left action of $C([0,1])$ on $X$. We say that $X$ is \emph{fibred over
$[0,1]$} if $f \cdot x = x \cdot f$ for all $x \in X$ and $f \in C([0,1])$. If $X$ is fibred over
$[0,1]$, then for each $t \in [0,1]$, the right-Hilbert module $X^t$ becomes a Hilbert
$A$-bimodule with left action satisfying $a \cdot (x + I_t) = (f \cdot x) + I_t$ for any $f \in
C([0,1], A)$ satisfying $f(t) = a$.

If $P$ is a semigroup, and $X$ is a product system over $P$ with coefficient algebra $A \otimes
C([0,1])$, we will say that $X$ is \emph{fibred over $[0,1]$} if each $X_p$ is fibred over
$[0,1]$.

\begin{lemma}\label{lem:bundle of product systems}
Let $P$ be a semigroup, and let $X$ be a nondegenerate product system over $P$ with coefficient
algebra $C([0,1], A)$ that is fibred over $[0,1]$. For each $t \in [0,1]$ the system $X^t :=
\{X_p^t : p \in P\}$ is a product system over $P$ with coefficient algebra $A$, with
multiplication given by $(x + X_p \cdot I_t)(y + X_q \cdot I_t) = xy + X_{pq} \cdot I_t$ for all
$p,q \in P$, $x \in X_p$ and $y \in X_q$. For any $p \in P$, if the left action of $C([0,1], A)$
on $X_p$ is by compacts, then the left action of $A$ on each $X^t_p$ is by compacts. For any $p
\in P$, if the action of $A$ on each $X^t_p$ is faithful then the action of $C([0,1], A)$ on $X_p$
is faithful; and if the left action of $C([0,1], A)$ on $X_p$ is by compacts, then the converse
holds.
\end{lemma}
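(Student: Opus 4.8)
The statement has several parts; I will address them in order, keeping an eye on what structures carry over to the quotients $X^t = X/(X \cdot I_t)$.

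First, the product system structure on $X^t$. The quotient maps $q_t^p : X_p \to X_p^t$ are surjective homomorphisms of right-Hilbert modules, and each $X_p^t$ is a Hilbert $A$-bimodule by the paragraph preceding the lemma (using that $X$ is fibred over $[0,1]$, so that the left $C([0,1])$-action and the right $C([0,1])$-action agree, making the left $A$-action on $X_p^t$ well-defined). To get the multiplication, I first note that for $x \in X_p$ and $y \in X_q$ we have $xy \cdot I_t = (x \cdot I_t) y$ and also $xy \cdot I_t = x (y \cdot I_t)$, where in the second identity I use that the right $I_t$-action on $X_q$ coincides with the left $I_t$-action (fibredness) and that the product system multiplication intertwines left and right $A$-actions on adjacent fibres, i.e. $x (a \cdot y) = (x \cdot a) y$. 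Hence the bilinear map $(x,y) \mapsto xy + X_{pq} \cdot I_t$ descends to a well-defined map $X_p^t \times X_q^t \to X_{pq}^t$, and one checks it extends to a Hilbert-bimodule isomorphism $M_{p,q}^t : X_p^t \otimes_A X_q^t \to X_{pq}^t$: it is isometric because $\langle x \otimes y, x' \otimes y'\rangle$ in the tensor product is computed from $\langle x, x'\rangle$ and $\langle y, y'\rangle$, which are intertwined by $q_t$, and it has dense range because $M_{p,q}$ does and $q_t^{pq}$ is surjective. Associativity of $M^t$ is inherited from associativity of $M$ by applying $q_t$. Condition (iii) of the product system axioms for $X^t$ follows from that for $X$. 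So $X^t$ is a product system over $A$.

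Next, the statement about compacts. Suppose $\phi_p(C([0,1],A)) \subseteq \K(X_p)$. The quotient map $q_t^p : X_p \to X_p^t$ induces a homomorphism $\K(X_p) \to \K(X_p^t)$ sending $\Theta_{x,y} \mapsto \Theta_{q_t^p(x), q_t^p(y)}$ (this is the standard fact that a surjection of Hilbert modules compatible with the coefficient algebras induces a map on compacts; concretely it is implemented by $T \mapsto q_t^p \circ T$ followed by the identification $X_p^t = X_p/(X_p \cdot I_t)$, valid because $X_p \cdot I_t$ is $T$-invariant for $T$ compact). Then for $a \in A$, picking $f \in C([0,1],A)$ with $f(t) = a$, the left action of $a$ on $X_p^t$ is the image of $\phi_p(f) \in \K(X_p)$ under this map, hence compact. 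This handles the third sentence.

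Finally, the faithfulness equivalence. For the forward direction, suppose the $A$-action on every $X_p^t$ is faithful, and let $f \in C([0,1],A)$ with $\phi_p(f) = 0$ on $X_p$. Then for each $t$, the induced action of $f(t)$ on $X_p^t$ is zero (it is $q_t^p \circ \phi_p(f) \circ (\text{lift})$), so $f(t) = 0$ by faithfulness; as this holds for all $t$, $f = 0$. For the converse, assume additionally $\phi_p(C([0,1],A)) \subseteq \K(X_p)$, and suppose $\phi_p$ is faithful. Fix $t$ and $a \in A$ acting as $0$ on $X_p^t$; I want $a = 0$. Choose $f \in C([0,1],A)$ with $f(t) = a$; then $\phi_p(f) \in \K(X_p)$ maps to $0$ in $\K(X_p^t)$, i.e. $\phi_p(f) \in \K(X_p) \cap \ker(\K(X_p) \to \K(X_p^t))$. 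The kernel of $\K(X_p) \to \K(X_p^t)$ is $\K(X_p \cdot I_t)$, which equals $\K(X_p) \cdot I_t$ under the left $C([0,1])$-module structure on $\K(X_p)$ (here I use the bundle picture: $\K(X)$ is itself a $C([0,1])$-algebra with fibres $\K(X^t)$, from the footnote's linking-algebra construction). So $\phi_p(f) = \phi_p(f) \cdot g$ for appropriate $g \in C_0([0,1]\setminus\{t\})$-type elements, and evaluating at $s \ne t$ doesn't immediately give $f = 0$ — rather, I argue: $\phi_p(f) \in \K(X_p) \cdot I_t$ means $\phi_p(f)$ lies in the kernel of evaluation at $t$ in the $C([0,1])$-algebra $\K(X_p)$. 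But $\phi_p$ is a $C([0,1])$-linear faithful map $C([0,1],A) \to \K(X_p)$, hence an injection of $C([0,1])$-algebras, so $\phi_p(f)$ vanishing in the fibre at $t$ forces $f$ to vanish in the fibre of $C([0,1],A)$ at $t$, i.e. $f(t) = a = 0$.

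\textbf{Main obstacle.} The delicate point is the converse faithfulness claim: one genuinely needs the compactness hypothesis, and the cleanest route is to recognise $\K(X_p)$ as a $C([0,1])$-algebra whose fibre at $t$ is $\K(X_p^t)$ — with the kernel of evaluation being $\K(X_p)\cdot I_t$ — and then observe that $\phi_p$ restricts to a $C([0,1])$-linear (faithful, hence fibrewise-respecting) $*$-homomorphism $C([0,1],A) \to \K(X_p)$. Establishing that $\ker(\K(X_p) \to \K(X_p^t)) = \K(X_p \cdot I_t)$ and that this is the evaluation ideal is the technical heart; everything else is bookkeeping with the quotient maps $q_t^p$ and the defining identities of a product system.
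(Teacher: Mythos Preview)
Your argument for the product-system structure on $X^t$ and for the preservation-of-compacts claim tracks the paper's proof closely; the paper uses the identification $X_p \otimes_{\varepsilon_t} A \cong X^t_p$ and invokes \cite[Lemma~A.2]{KPS7} to handle compacts, which amounts to the same induced map $\K(X_p) \to \K(X^t_p)$ that you describe. The forward faithfulness direction is likewise identical.

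For the converse faithfulness under the compactness hypothesis, your route diverges from the paper's. You reduce to the abstract assertion that an injective $C([0,1])$-linear $*$-homomorphism between $C([0,1])$-algebras is automatically fibrewise injective. This is true, but you do not prove it, and it is not a one-liner: its proof uses upper-semicontinuity of fibre norms together with a bump-function argument, and that is exactly what the paper carries out in concrete form. Specifically, the paper first establishes (via the $\K(X_p)$--$C([0,1],A)$-imprimitivity bimodule structure of $X_p$ and the Rieffel homeomorphism, invoking \cite[Theorem~C.26]{Williams}) that $\K(X_p)$ is a \emph{continuous} $C([0,1])$-bundle, so that $t \mapsto \|\phi^t_p(f(t))\|$ is continuous; it then argues by contrapositive: if $\phi^t_p(a) = 0$ with $\|a\| = 1$, pick a neighbourhood $U$ of $t$ on which $\|\phi^s_p(a)\| < 1/2$, take $g \in C_0(U)$ with $g(t) = 1$, and observe $\|ga\| = 1$ while $\|\phi_p(ga)\| < 1/2$, contradicting isometry of $\phi_p$. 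Your abstract formulation is conceptually clean and you correctly isolate the identification of the fibres of $\K(X_p)$ as the technical heart, but as written the step ``injection of $C([0,1])$-algebras $\Rightarrow$ fibrewise injective'' is asserted without justification; filling it in essentially reproduces the paper's bump-function argument.
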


Our proof of this lemma uses one direction of a result of \cite{KPS7} which is stated there as an
if and only if. The other implication is flawed, so we pause to explain the issue and why the
implication we need to use is nevertheless correct. This text originated in a sequel
to~\cite{KPS7} currently in preparation by Patterson, Sierakowski, Taylor and the third author; we
thank the other three authors for allowing us to publish it here instead.

\begin{rmk}\label{rmk:KPS7}
In our next proof, we use the ``$\Rightarrow$'' implication of \cite[Lemma~A.2]{KPS7}. The lemma
asserts that given a right-Hilbert $C([0,1], A)$-module $X$, an element $T \in \Ll(X)$ is compact
if and only if each $\widetilde{\varepsilon}_t(T)$ is compact. This is incorrect: the
``$\Leftarrow$'' implication fails even for full right-Hilbert $C([0,1], A)$-modules: take, for
example, $A = \C$ and let $X = C_0([0,1)) \oplus C_0((0,1])$ under its natural $C([0,1])$-valued
inner-product. Then $\widetilde{\varepsilon}_t(1_X)$ is compact for all $t$, but $1_X$ is not
compact. However, the ``$\Rightarrow$'' implication---that if $T \in \K(X)$, then each
$\widetilde{\varepsilon}_t(T) \in \K(X \otimes_{\varepsilon_t} A)$---and the proof of this
implication given in \cite{KPS7} are correct; and this is the implication that we invoke below.

Fortunately, the error  mentioned above has no flow-on effects in \cite{KPS7}. The incorrect characterisation
of compact operators on a right-Hilbert $C([0,1], A)$-module is only used in the proof of
\cite[Proposition~A.1]{KPS7} to show that if $X, Y$ are right-Hilbert $C([0,1], A)$-modules and $U
: X \otimes_{\varepsilon_1} A \to Y \otimes_{\varepsilon_0} A$ is an isomorphism, and if $S \in
\K(X)$ and $T \in \K(Y)$ satisfy $U \widetilde{\varepsilon}_1(S) U^{-1} =
\widetilde{\varepsilon}_0(T)$, then the operator $S \oplus T \in \K(X \oplus Y)$ restricts to a
compact operator on the right-Hilbert $C([0,1], A)$-module $Z = \{(\xi,\eta) \in X \oplus Y :
U(\xi \otimes_{\varepsilon_1} 1) = \eta \otimes_{\varepsilon_0} 1\}$ with $\xi\mapsto \xi
\otimes_{\varepsilon_t} 1$ denoting the quotient map of $X$ into $X \otimes_{\varepsilon_t} A\cong
X/\{x: \varepsilon_t(\langle x,x\rangle_A)=0\}$ and similarly for $\eta\mapsto \eta
\otimes_{\varepsilon_t} 1$. A correct proof of this statement appears as \cite[Lemma~4.5]{Kaad} (a
more direct proof that does not invoke Kasparov's stabilisation theorem is also possible), and
this fixes the gap in the proof of \cite[Proposition~A.1]{KPS7}.
\end{rmk}

\begin{proof}[Proof of Lemma~\ref{lem:bundle of product systems}]
Fix $t \in [0,1]$. Each $X_t$ is a Hilbert $A$-bimodule as discussed above, so we just have to
show that the formula $(x + X_p \cdot I_t)(y + X_q \cdot I_t) = xy + X_{pq} \cdot I_t$ determines
a well-defined multiplication that induces isomorphisms $X^t_p \otimes_A X^t_q \cong X^t_{pq}$. For
$p,q \in P$ let $M_{p,q} : X_p \otimes_{C([0,1], A)} X_q \to X_{pq}$ be the multiplication map.
Fix $x \in X_p \cdot I_t$, and write $x = x' \cdot f$ where $f(t) = 0$. Since $M_{p,q}$ is an
isomorphism and since $X_q$ is fibred over $[0,1]$, for any $y \in X^t_q$, we have
\begin{align*}
xy &= M_{p,q}(x \otimes y)
    = M_{p,q}(x' \otimes f \cdot y)\\
    &= M_{p,q}\big((x' \otimes y) \cdot f\big)
    = M_{p,q}(x' \otimes y)\cdot f = x'y \cdot f \in X_{pq} \cdot I_t.
\end{align*}
So if $x + X_p \cdot I_t = x' + X_p \cdot I_t$ then $xy - x'y = (x - x')y \in X_{pq} \cdot I_t$.
A simple argument using associativity and distributivity of multiplication shows that if $y + X_q
\cdot I_t = y' + X_q \cdot I_t$, then for any $x \in X_q$ we have $xy = xy'$, and it follows that
the formula for multiplication is well-defined.

For $x, x' \in X_p$ and $y, y' \in X_q$, blurring, where appropriate, the distinction betwen
$C([0,1], A)/I_t$ and $A$, we have
\begin{align*}
\langle xy + X_{pq} \cdot I_t, {}&x'y' + X_{pq} \cdot I_t\rangle_A\\
    &= \langle xy, x'y'\rangle_{C([0,1], A)} + I_t\\
    &= \big\langle x \otimes y, x' \otimes y'\big\rangle_{C([0,1], A)} + I_t\\
    &= \big\langle x \otimes y + (X_p \otimes_{C([0,1], A)} X_q) \cdot I_t,\\
    &\hskip8em x' \otimes y' + (X_p \otimes_{C([0,1], A)} X_q) \cdot I_t\big\rangle_{C([0,1], A)} + I_t\\
    &= \big\langle (x + X_p \cdot I_t) \otimes (y + X_q \cdot I_t), (x' + X_p \cdot I_t) \otimes (y' + X_q \cdot I_t)\big\rangle_A.
\end{align*}
So the multiplication map described above defines isomorphisms $X^t_p \otimes_A X^t_q \cong
X^t_{pq}$ of Hilbert modules. Associativity follows from associativity of multiplication in $X$.

For the second-last assertion, first recall from the ``$\Rightarrow$'' implication of
\cite[Lemma~A.2]{KPS7} (see Remark~\ref{rmk:KPS7}) that if $T \in \K (X_p)$ is compact and
$\varepsilon_t: C([0,1], A) \to A$ is induced by evaluation at $t$, then $T
\otimes_{\varepsilon_t} 1$ belongs to $\mathcal{K}(X_p \otimes_{\varepsilon_t} A)$ for all $t \in
[0,1]$. The map $x \otimes a \mapsto (x + X \cdot I_t) \cdot a$ is an isomorphism $X_p
\otimes_{\varepsilon_t} A \cong X^t_p$ that intertwines $\phi_p(f) \otimes 1$ with
$\phi^t_p(f(t))$ for $f \in C([0,1], A)$ and $t \in [0,1]$. So we deduce that if $\phi_p(f)$ is
compact then each $\phi^t_p(f(t))$ is compact. Thus if $A \otimes C([0,1])$ acts by compacts on
$X_p$, then $A$ acts by compacts on each $X^t_p$.

For the last assertion, first note that if each $\phi^t_p$ is injective, then each $\phi^t_p$ is
isometric, and so $\|\phi_p(f)\| = \sup_{t \in [0,1]} \|\phi_p^t(f(t))\| = \sup_{t \in [0,1]}
\|f(t)\| = \|f\|$. For the converse implication, suppose that $C([0,1], A)$ acts by compacts on
$X_p$. We will first show that for any $f \in C([0,1], A)$ and any $p \in P$, the map $t \mapsto
\| \phi_p^t(f(t))\|$ is continuous. To this end, note that the second paragraph of the proof of
\cite[Lemma~A.2]{KPS7} shows that, writing $I_{X_p} := \langle X_p, X_p\rangle_{C([0,1], A)}
\subseteq C([0,1], A)$, the module $X_p$ is a $\mathcal{K}(X_p)$--$I_{X_p}$-imprimitivity
bimodule, and the open map $\operatorname{Prim}(\mathcal{K}(X_p)) \to [0,1]$ arising from the
$C([0,1])$-structure as in \cite[Theorem~C.26]{Williams} is the composition of the Rieffel
homeomorphism for $X_p$ with the canonical map $\operatorname{Prim}(C([0,1], A)) \to [0,1]$. Since
this final map  is open, so is the composition, and we deduce from the final assertion of
\cite[Theorem~C.26]{Williams} that $\mathcal{K}(X_p)$ is a $C([0,1])$-algebra with norm-continuous
sections. Consequently, if $T \in \K(X_p)$, the map $t \mapsto \| T \otimes_{\varepsilon_t} 1\|$
is continuous.  Since we are assuming that the left action of $C([0,1], A)$ on $X_p$ is by
compacts, we see that $t \mapsto \|\phi^t_p(f(t))\|$ is continuous for each $f \in C([0,1], A)$.

Now suppose that there exists $t \in [0,1]$ such that $\phi^t_p$ is not injective. Then there
exists $a \in A$ with $\|a\| = 1$ such that $\phi^t_p(a) = 0$. By continuity of the norm, there
exists an open neighbourhood $U$ of $t$ in $[0,1]$ such that $\|\phi_p^s(a)\| < 1/2$ for all $s
\in U$. Choose $f \in C_0(U) \subseteq C([0,1])$ with $0 \le f \le 1$ and $f(t) = 1$. The function
$fa \in C([0,1], A)$ given by $(fa)(s) = f(s)a$ satisfies $\|fa\| = \|a\| = 1$, and
$\|\phi_p(fa)\| = \sup_{s \in [0,1]} \|\phi^s_p((fa)(s))\| = \sup_{s \in U} f(s)\|\phi_p^s(a)\| <
1/2$. So $\phi_p$ is not isometric and therefore not injective.
\end{proof}

The previous Lemma enables us to adopt the following definition of a homotopy of product systems.

\begin{defn} \label{def:htopy-prod-syst}
Let $A$ be a $C^*$-algebra, and let $P$ be a semigroup. Let $Y$ and $Z$ be product systems over
$P$ with coefficient algebra $A$. A \emph{homotopy of product systems} from $Y$ to $Z$ is a
nondegenerate product system $X$ over $P$ with coefficient algebra $C([0,1], A)$ that is fibred
over $[0,1]$ such that the product system $X^0$ is isomorphic to $Y$ and the product system $X^1$
is isomorphic to $Z$. We say that $Y$ and $Z$ are \emph{homotopic}.
\end{defn}

\begin{example}\label{eg:trivial homotopy}
Let $\psX$ be a product system over $P$ with coefficient algebra $A$.  For each $p \in P$, define
$Y_p := X_p \otimes C([0,1])$. Define multiplication on $Y = \{Y_p\}_{p \in P}$ by
\[
    (x \otimes f) (y \otimes g) := xy \otimes fg.
\]

We first claim that $Y$ is a homotopy of product systems from $X$ to $X$. It is easy to check that
$Y$ is a nondegenerate product system over $A \otimes C([0,1])$, using that the external tensor
product factors through the internal tensor product of Hilbert bimodules. It is then a homotopy of
product systems by definition, and it is standard that the quotient $X_p \otimes C([0,1])/(X_p
\otimes C_0([0,1] \setminus \{t\}) \cong X_p$, so $Y$ is a homotopy of product systems from $X$ to
$X$.

We now claim that
\[
    \mathcal{NO}_{Y} \cong \mathcal{NO}_\psX \otimes C([0,1]) \quad \text{ and } \quad \mathcal{NT}_{Y} \cong \mathcal{NT}_\psX \otimes C([0,1]).
\]
The universal property of $\mathcal{NO}_X$ gives a homomorphism $\rho: \mathcal{NO}_X \to
\mathcal{NO}_Y$ such that $\rho(j_p(x)) = j_p(x \otimes 1_{C([0,1])})$ for all $p \in P$ and $x
\in X$. It is routine to check that there is a homomorphim $\sigma : C([0,1]) \to
\mathcal{ZM}(\mathcal{NO}_Y)$ such that $\sigma(f)j_p(x \otimes g) = j_p(x \otimes fg)$. The
universal property of the tensor product then gives a homomorphism $(\rho \otimes \pi) :
\mathcal{NO}_X \otimes C([0,1]) \to \mathcal{NO}_Y$ such that $(\rho \otimes \pi)(j_p(x) \otimes
f) = \rho(j_p(x))\sigma(f) = j_p(x \otimes f)$.

For each $p$ the map $x \otimes f \mapsto j_p(x) \otimes f$ is a module map from $Y_p$ to
$\mathcal{NO}_X \otimes C([0,1])$, and routine calculations show that these assemble into a
representation $\tau$ of $Y$ in $\mathcal{NO}_X \otimes C([0,1])$. The modules $\widetilde{Y}_p$
invoked in the definition of Cuntz--Pimsner covariance are canonically isomorphic with the
corresponding modules $\widetilde{X}_p \otimes C([0,1])$. Likewise, each $\mathcal{K}(X_p \otimes
C([0,1]))$ is canonically isomorphic to $\mathcal{K}(X_p) \otimes C([0,1])$. A sum $\sum_{p \in F}
\widetilde{\iota}_p^{s}(T_p \otimes f_p)$ is zero in $\Ll(\widetilde{Y_s})$ for large $s$ if and
only if each $\sum_{p \in F} \iota_p^{s}(f_p(t)T_p)$ is eventually zero in $\Ll(\widetilde{X}_s)$,
and we deduce that $\tau$ is Cuntz--Nica--Pimsner covariant, and therefore induces a homomorphism
$\Pi \tau : \mathcal{NO}_Y \to \mathcal{NO}_X \otimes C([0,1])$. It is routine to see that $\Pi
\tau$ and $(\rho \otimes \pi)$ are mutually inverse, and hence isomorphisms.
\end{example}

\begin{prop}\label{prp:homotopy equiv rel}
Let $A$ be a $C^*$-algebra and let $P$ be a semigroup. Then homotopy of product systems is an
equivalence relation on the collection of product systems over $P$ of Hilbert bimodules over $A$.
\end{prop}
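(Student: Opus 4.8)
The plan is to verify the three axioms of an equivalence relation—reflexivity, symmetry, and transitivity—for the relation ``is homotopic to'' on (isomorphism classes of) product systems over $P$ with coefficient algebra $A$.

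\textbf{Reflexivity.} This is already essentially done in Example~\ref{eg:trivial homotopy}: given a product system $\psX$ over $A$, the system $Y_p := X_p \otimes C([0,1])$ with multiplication $(x \otimes f)(y \otimes g) := xy \otimes fg$ is a nondegenerate product system over $C([0,1], A)$, is fibred over $[0,1]$ since the left and right actions of $f \in C([0,1])$ both act as $1_{X_p} \otimes f$, and has $Y^t \cong \psX$ for every $t$ via the standard identification $X_p \otimes C([0,1]) / (X_p \otimes C_0([0,1] \setminus \{t\})) \cong X_p$. So $\psX$ is homotopic to itself.

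\textbf{Symmetry.} Suppose $X$ is a homotopy of product systems from $Y$ to $Z$, so $X^0 \cong Y$ and $X^1 \cong Z$. Let $\theta : [0,1] \to [0,1]$ be the flip $\theta(t) = 1-t$, which induces a $C^*$-algebra automorphism $\theta^* : C([0,1], A) \to C([0,1], A)$ by $\theta^*(f) = f \circ \theta$. I would define $\overline{X}_p$ to be $X_p$ with the same underlying Banach space and the same left action, but with right action and inner product twisted by $\theta^*$: that is, $x \cdot_{\overline{X}} f := x \cdot \theta^*(f)$ and $\langle x, y\rangle^{\overline{X}}_{C([0,1],A)} := \theta^*\big(\langle x, y\rangle_{C([0,1],A)}\big)$, with left action $a \cdot_{\overline X} x := \theta^*(a) \cdot x$. (Equivalently, $\overline{X}$ is the pullback of the bundle of product systems along $\theta$; compare Lemma~\ref{lem:bundle of product systems}.) Each $\overline{X}_p$ is again a Hilbert $C([0,1],A)$-bimodule, the multiplication maps $M_{p,q}$ of $X$ remain $C([0,1],A)$-bilinear and inner-product preserving for the twisted structures, so $\overline{X}$ is a product system; it is nondegenerate because $X$ is; and it is fibred over $[0,1]$ because $X$ is and $\theta^*$ fixes no nontrivial structure that would break $f \cdot x = x \cdot f$. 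Finally $(\overline{X})^t = X_p / (X_p \cdot \theta^*(I_t)) = X_p/(X_p \cdot I_{1-t}) = (X^{1-t})_p$, so $(\overline X)^0 \cong X^1 \cong Z$ and $(\overline X)^1 \cong X^0 \cong Y$, giving a homotopy from $Z$ to $Y$.

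\textbf{Transitivity.} This is the main obstacle. Suppose $X$ is a homotopy from $Y$ to $W$ and $X'$ is a homotopy from $W$ to $Z$, with isomorphisms $\alpha : X^1 \xrightarrow{\cong} W \xleftarrow{\cong} (X')^0 : \beta$. The idea is to glue $X$ and $X'$ at the endpoint using these isomorphisms. Concretely, for each $p$ I would set
\[
  V_p := \{ (x, x') \in X_p \oplus X'_p : \alpha(x + X_p \cdot I_1) = \beta(x' + X'_p \cdot I_0) \in W_p \},
\]
which is a right-Hilbert module over $\{(f, f') \in C([0,1],A)^{\oplus 2} : f(1) = f'(0)\} \cong C([0,2], A) \cong C([0,1], A)$ (the last isomorphism via an affine reparametrisation of $[0,2]$ onto $[0,1]$). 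The left action, inner product, and multiplication maps are defined coordinatewise, and one checks $V = \{V_p\}$ is a product system over $C([0,1],A)$; it is nondegenerate and fibred over $[0,1]$ because $X$ and $X'$ are and the gluing is compatible with the $C([0,1])$-actions. The point of the fibre condition and the compatibility built into the pullback is that $V^t \cong X^{2t}$ for $t \in [0, 1/2]$ and $V^t \cong (X')^{2t - 1}$ for $t \in [1/2, 1]$ (the two descriptions agree at $t = 1/2$ precisely because of the defining condition on $V$, using $\alpha, \beta$); hence $V^0 \cong X^0 \cong Y$ and $V^1 \cong (X')^1 \cong Z$. The bookkeeping to confirm that $V_p$ is complete, that the inner product is $C([0,2],A)$-valued and positive definite, that $M^V_{p,q} : V_p \otimes V_q \to V_{pq}$ is an isomorphism, and that the fibre identifications hold, is routine but is where essentially all the work lies; I would present it by reducing, wherever possible, to the already-established fibrewise statements of Lemma~\ref{lem:bundle of product systems} rather than re-deriving Hilbert-module facts from scratch.
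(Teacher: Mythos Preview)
Your proposal is correct and follows essentially the same approach as the paper: reflexivity via the trivial homotopy $X \otimes C([0,1])$, symmetry via pulling back along the flip $t \mapsto 1-t$, and transitivity via the fibre-product (pullback) gluing over $\{(f,f') : f(1)=f'(0)\} \cong C([0,1],A)$. The only wrinkle is a small wording slip in your symmetry argument (you say ``the same left action'' and then give the twisted left action $a \cdot_{\overline X} x := \theta^*(a)\cdot x$); the formula is the correct one and matches the paper's $F_*X$, so this is cosmetic.
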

\begin{proof}
Example~\ref{eg:trivial homotopy} shows that the external tensor product $Y \otimes C([0,1])$ is a
homotopy from $Y$ to $Y$. let $F : [0,1] \to [0,1]$ be the flip homeomorphism $F(t) = 1-t$, and
let $X$ be a homotopy from $Y$ to $Z$. Then there is a product system $F_* X$ given by setting
$(F_* X)_p = X_p$ as a vector space, but with inner-product and actions given by $a \mathbin{:} x
= (a \circ F) \cdot x$, $x \mathbin{:} a = x\cdot (a \circ F)$ and $\llangle x,
y\rrangle_{C([0,1], A)} = \langle x, y\rangle_{C([0,1], A)} \circ F$. This $F_*(X)$ is a homotopy
from $Z$ to $Y$.

Finally, suppose that $W$ is a homotopy from $Y_1$ to $Y_2$, and $X$ is a homotopy from $Y_2$ to
$Y_3$. Then, by definition, there are isomorphisms of product systems $U : W^1 \to Y_2$ and $V :
X^0 \to Y_2$. Define subspaces $Z_p \le W_p \oplus X_p$ by
\[
Z_p := \{(w,x) \in W_p \oplus X_p : U(w + W_p\cdot I_1) = V(x + X_p \cdot I_0)\}.
\]
Routine calculations show that each $Z_p$ is invariant for the left and right actions of the
subalgebra $B := \{(a,b) \in C([0,1], A) \oplus C([0,1], A) : a(1) = b(0)\}$, and also that
$\langle Z_p, Z_p\rangle_{C([0,1], A) \oplus C([0,1], A)} \subseteq B$. It is straightforward to
check that the product-system structure on $W \oplus X$ restricts to give $Z$ the structure of a
nondegenerate product system over $P$ of Hilbert bimodules over $B$. We have an isomorphism
$\varphi : B \to C([0,1], A)$ given by
\[
\varphi(a,b)(t) = \begin{cases}
    a(2t)   &\text{ if $t \le 1/2$}\\
    b(2t-1) &\text{ if $t > 1/2$.}
    \end{cases}
\]
Using this to view $Z$ as a product system of Hilbert bimodules over $C([0,1], A)$, we see that $Z$
is fibred over $[0,1]$ and that $Z^0 = W^0 \cong Y_1$ while $Z^1 = X^1 \cong Y_3$; so $Z$ is a
homotopy from $Y_1$ to $Y_3$.
\end{proof}

We now begin exploring the relation between $\mathcal{NT}_X$ and $\mathcal{NT}_{X^t}$ for a
homotopy $X$ of product systems.  Our goal is to show the commutativity of the following diagram,
where $\varepsilon_t: X \to X^t$ is induced by the ``evaluation at $t$'' map  $C([0,1], A) \to A$
on the coefficient algebras.
\[
\begin{tikzpicture}
    \node (33) at (3,2) {$\mathcal{NT}_X$};
    \node (63) at (6,2) {$\mathcal{NO}_X$};
    \node (30) at (3,0) {$\mathcal{NT}_{X^t}$};
    \node (60) at (6,0) {$\mathcal{NO}_{X^t}$};
    \node (03) at (0,2) {$X$};
    \node (00) at (0,0) {$X^t$};
    \draw[-stealth] (03)-- node[below] {$i_X$} (33);
    \draw[-stealth] (00)-- node[above] {$i_{X^t}$} (30);
    \draw[-stealth] (30)-- node[above] {$q_t$} (60);
    \draw[-stealth] (33)-- node[below] {$q$} (63);
    \draw[-stealth] (03) -- node[left] {$\varepsilon_t$} (00);
    \draw[-stealth] (33)-- node[left] {$\varepsilon_{t*}$} (30);
    \draw[-stealth] (63)--node[right]{$\widetilde{\varepsilon}_{t*}$} (60);
    \draw[-stealth] (03) edge [out=30, in=150] node[above]{$j_X$} (63);
    \draw[-stealth] (00) edge [out=330, in=210] node[below]{$j_{X^t}$} (60);
\end{tikzpicture}
\]

\begin{prop}
\label{pr:evaluation-maps-commute} Let $A$ be a $C^*$-algebra, let $P$ be a quasi-lattice ordered
semigroup, and let $X$ be a nondegenerate, compactly aligned product system over $P$ of
Hilbert bimodules over $C([0,1], A)$. Suppose that $X$ is fibred over $[0,1]$.
\begin{enumerate}
\item There is a unique homomorphism $\iota : C([0,1]) \to \mathcal{ZM}(\mathcal{NT}_X)$ such
    that $\iota(f) i_e(g) = i_e(t \mapsto f(t)g(t))$ for all $g \in C([0,1], A)$, and this
    $\iota$ is unital.
\item For each $t \in [0,1]$, let $I^\mathcal{T}_t$ be the ideal $\{\iota(f) b : f \in C([0,1]),
    f(t) = 0, \text{ and } b \in \mathcal{NT}_X\}$. There is a surjective homomorphism
    $\varepsilon_{t*} : \mathcal{NT}_X \to \mathcal{NT}_{X^t}$ such that
    $\varepsilon_{t*}(i_p(x)) = i_p(x + X_p \cdot I_t)$ for all $p \in P$ and $x \in X_p$, and
    we have $\ker(\varepsilon_{t*}) = I^\mathcal{T}_t$.
\item Let $q : \mathcal{NT}_X \to \mathcal{NO_X}$ and $q_t : \mathcal{NT}_{X^t} \to
    \mathcal{NO}_{X^t}$ be the quotient maps. Then $\jmath := q \circ \iota$ is a unital
    homomorphism $\jmath : C([0,1]) \to \mathcal{ZM}(\mathcal{NO}_X)$, each $\varepsilon_{t*}$
    descends to a homormophism $\tilde\varepsilon_{t*} : \mathcal{NO}_X \to \mathcal{NO}_{X^t}$,
    and if we write $I^\mathcal{O}_t := q(I^{\mathcal{T}}_t)$ then we have $I^\mathcal{O}_t =
    \{\jmath(f) b : f \in C([0,1]), f(t) = 0, \text{ and } b \in \mathcal{NO}_X\}$.
\item Suppose that $P$ is directed, and that for each $p \in P$ the left action of $C([0,1], A)$
    on $X_p$ is injective and by compacts. Then $\ker(\tilde\varepsilon_{t*}) =
    I^\mathcal{O}_t$.
\end{enumerate}
\end{prop}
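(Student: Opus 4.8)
The plan is to prove the four assertions in order, since each builds on the previous. The main structural idea is that everything reduces to the corresponding, already-understood, statement about the $C([0,1])$-structure on a single Hilbert bimodule $X_e = C([0,1],A)$ and the fact that $\mathcal{NT}_X$ and $\mathcal{NO}_X$ are spanned by elements of the form $i_X(x)i_X(y)^*$ (resp.\ $j_X(x)j_X(y)^*$).

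\textbf{Part (1).} First I would build $\iota$. The fibre $X_e = C([0,1],A)$ carries the evident central left action of $C([0,1])$ (multiplication in the first variable), which agrees with the right action since $X$ is fibred over $[0,1]$ — in fact the fibredness hypothesis is exactly what makes the two actions of $C([0,1])$ on each $X_p$ coincide, so $C([0,1])$ acts centrally on every fibre. Since $i_e : C([0,1],A) \to \mathcal{NT}_X$ is a homomorphism and $C([0,1],A)$ is nondegenerate (so $i_e$ extends to multipliers, or one uses an approximate identity), $i_e$ extends to a unital homomorphism $\bar\imath_e : \M(C([0,1],A)) = \M(A)\otimes C([0,1]) \to \M(\mathcal{NT}_X)$; restrict this to $1_{\M(A)}\otimes C([0,1])$ to get $\iota$. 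That $\iota(f)$ is central: using Nica covariance, $\mathcal{NT}_X = \overline{\operatorname{span}}\{i_p(x)i_q(y)^*\}$, and $\iota(f)i_p(x) = i_p(f\cdot x) = i_p(x\cdot f) = i_p(x)\iota(f)$ because the $C([0,1])$-action is central on $X_p$ and compatible with the product-system multiplication (this is where fibredness and condition (iii) of a product system are used); a symmetric computation handles $i_q(y)^*$. Uniqueness is immediate since $\iota(f)$ is determined on $i_e(C([0,1],A))$, which is nondegenerate in $\mathcal{NT}_X$.

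\textbf{Parts (2) and (3).} For part (2), Lemma~\ref{lem:bundle of product systems} gives that $X^t$ is a product system over $A$, and it is nondegenerate and compactly aligned (compact alignment of $X^t$ follows from that of $X$ via the intertwiner $X_p\otimes_{\varepsilon_t}A \cong X^t_p$ from the Lemma, together with \cite[Lemma~A.2]{KPS7}). The maps $i^{X^t}_p \circ (\text{quotient})$ assemble to a Nica covariant representation of $X$ in $\mathcal{NT}_{X^t}$ — Nica covariance passes to the quotient because $\iota_p^{p\vee q}$ is compatible with the quotient maps $X_p\to X^t_p$ — so the universal property yields $\varepsilon_{t*}$; it is surjective because the $i^{X^t}_p(X^t_p)$ generate $\mathcal{NT}_{X^t}$ and the quotient maps $X_p\to X^t_p$ are onto. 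Clearly $I^\mathcal{T}_t \subseteq \ker(\varepsilon_{t*})$ since $\varepsilon_{t*}(\iota(f)) = i^{X^t}_e(f(t)\cdot) = 0$ when $f(t)=0$. For the reverse inclusion, the standard move: $\mathcal{NT}_X/I^\mathcal{T}_t$ is generated by the images of the $i_p(x)$, these satisfy the relations of a Nica covariant representation of the quotient product system $X_p/X_p\cdot I_t = X^t_p$ (one checks that $i_p(x\cdot f)\in I^\mathcal{T}_t$ when $f(t)=0$, using $i_p(x\cdot f) = i_p(x)\iota(f)$), and this gives an inverse homomorphism $\mathcal{NT}_{X^t} \to \mathcal{NT}_X/I^\mathcal{T}_t$; hence $\ker(\varepsilon_{t*}) = I^\mathcal{T}_t$. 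Part (3) is then formal: $\jmath = q\circ\iota$ is a unital central homomorphism since $q$ is a surjective homomorphism; $\varepsilon_{t*}$ descends because $q_t\circ\varepsilon_{t*}$ kills the Cuntz--Nica--Pimsner relations of $X$ (the relation $\sum\widetilde\iota^s_p(T_p)=0$ for large $s$ in $X$ maps to the corresponding relation in $X^t$, since $\widetilde\iota$ is compatible with the quotient, using Example~\ref{eg:trivial homotopy}-style identifications of $\widetilde{X^t}_p$ with quotients of $\widetilde X_p$); and $I^\mathcal{O}_t = q(I^\mathcal{T}_t)$ has the stated form because $q$ is surjective and intertwines $\iota$ with $\jmath$.

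\textbf{Part (4) — the main obstacle.} Here the point is that $\widetilde\varepsilon_{t*}$ has \emph{no larger} kernel than $I^\mathcal{O}_t$, i.e.\ passing to Cuntz--Nica--Pimsner quotients on both sides does not collapse anything extra. The plan is to show that the composite $\mathcal{NO}_X/I^\mathcal{O}_t \to \mathcal{NO}_{X^t}$ induced by $\widetilde\varepsilon_{t*}$ is injective by constructing an inverse. Since $P$ is directed and each $\phi_p$ is injective and by compacts, \cite[Corollary~5.2]{sims--yeend} tells us that Cuntz--Nica--Pimsner covariance is equivalent to the single simple relation $\psi^{(p)}\circ\phi_p = \psi_e$ for all $p$; and by Lemma~\ref{lem:bundle of product systems} the same hypotheses hold for each $X^t$, so the same criterion applies there. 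So I would take the generating CNP representation $j^{X^t}$ of $X^t$, precompose with the quotient maps $X_p\to X^t_p$ to get a Nica covariant representation $\psi$ of $X$ in $\mathcal{NO}_{X^t}$, and check that $\psi$ satisfies $\psi^{(p)}\circ\phi_p = \psi_e$: for $a = f\in C([0,1],A)$, $\psi^{(p)}(\phi_p(f))$ is the image of $\phi^t_p(f(t))$ under $(j^{X^t})^{(p)}$ — here one needs that the isomorphism $X_p\otimes_{\varepsilon_t}A\cong X^t_p$ carries $\phi_p(f)\otimes 1$ to $\phi^t_p(f(t))$ and hence carries compacts to compacts compatibly — so it equals $j^{X^t}_e(f(t)) = \psi_e(f)$. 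Thus $\psi$ is CNP-covariant and factors through $\mathcal{NO}_X$, giving $\theta : \mathcal{NO}_X \to \mathcal{NO}_{X^t}$ with $\theta\circ q = q_t\circ\varepsilon_{t*}$, i.e.\ $\theta = \widetilde\varepsilon_{t*}$; and $\theta$ visibly kills $I^\mathcal{O}_t$, so it descends to $\bar\theta : \mathcal{NO}_X/I^\mathcal{O}_t \to \mathcal{NO}_{X^t}$. Conversely, the images in $\mathcal{NO}_X/I^\mathcal{O}_t$ of the $j^X_p(x)$ form a Nica covariant representation of $X^t$ (the $C([0,1])$-relation $j^X_p(x\cdot f) \in I^\mathcal{O}_t$ for $f(t)=0$ is what lets us descend to $X^t_p$) satisfying the criterion $\psi^{(p)}\circ\phi^t_p = \psi_e$ (pull back along $q$ the relation $(j^X)^{(p)}\circ\phi_p = j^X_e$, which holds in $\mathcal{NO}_X$ by \cite[Corollary~5.2]{sims--yeend}), hence induces a homomorphism $\mathcal{NO}_{X^t} \to \mathcal{NO}_X/I^\mathcal{O}_t$ that is inverse to $\bar\theta$. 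I expect the fiddly part to be the bookkeeping showing $j^X_p(x\cdot f)\in I^\mathcal{O}_t$ whenever $f(t)=0$ — this requires knowing that $I^\mathcal{O}_t$ contains not just $\jmath(f)\mathcal{NO}_X$ but is exactly that ideal (from part (3)) and that $j^X_p(x\cdot f) = j^X_p(x)\jmath(f)$, which follows from $\varepsilon_{t*}(i_p(x\cdot f)) = i_p(x\cdot f + X_p\cdot I_t) = 0$ — together with verifying that the quotient product system relations for $X^t$ are exactly the ones generating $I^\mathcal{O}_t$, so that nothing is over- or under-counted when building the inverse map.
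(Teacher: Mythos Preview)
Your proposal is correct and follows essentially the same strategy as the paper's proof: extend $i_e$ to multipliers for~(1), construct $\varepsilon_{t*}$ and its inverse via mutually inverse Nica-covariant representations for~(2), and for~(4) invoke the simplified CNP criterion of Sims--Yeend (the paper cites \cite[Proposition~5.1(2)]{sims--yeend}, you cite the equivalent Corollary~5.2) to build the inverse map $\mathcal{NO}_{X^t}\to\mathcal{NO}_X/I^\mathcal{O}_t$. The only inefficiency is that the first half of your part~(4)---reconstructing $\tilde\varepsilon_{t*}$ by checking CNP-covariance of $j^{X^t}\circ(\text{quotient})$ as a representation of $X$---duplicates what part~(3) already provides, so only the construction of the inverse is actually needed there.
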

\begin{proof}
(1) Since $X$ is nondegenerate, any approximate identity $(a_i)$ for $C([0,1], A)$ satisfies $a_i
\cdot x, x \cdot a_i \to x$ for all $x \in X$. Hence $i_e(a_i)i_p(x), i_p(x) i_e(a_i) \to i_p(x)$
for all $p \in P$ and $x \in X_p$, and we deduce that $i_e : C([0,1], A) \to \mathcal{NT}_X$ is
nondegenerate, and so extends to a unital homomorphism $\tilde{i}_e : \mathcal{M}(C([0,1], A)) \to
\mathcal{M}(\mathcal{NT}_X)$. Since $\mathcal{M}(C([0,1], A)) = C([0,1], \mathcal{M}(A))$, this
$\tilde{i}_e$ restricts to a unital homomorphism $\iota : C([0,1]) \to
\mathcal{M}(\mathcal{NT}_X)$. The range of $\iota$ is central because for each $x \in X_p$ we have
$\iota(f)i_p(x) = i_p(f \cdot x) = i_p(x \cdot f) = i_p(x) \iota(f)$ because $X_p$ is fibred over
$[0,1]$. This homomorphism clearly satisfies $\iota(f) i_e(g) = i_e(t \mapsto f(t)g(t))$, and for
uniqueness, we observe that if $\iota' : C([0,1]) \to \mathcal{ZM}(\mathcal{NT}_X)$ satisfies the
same formula, then for any $x \in X$ we can use Cohen factorisation to write $x = y \cdot g$ for
some $g \in C([0,1], A)$, and then $\iota'(f)i_p(x) = \iota'(f)i_p(y)i_e(g) =
i_p(y)\iota'(f)i_e(g)$ by centrality, and since $\iota'(f)i_e(g) = i_e(fg)$ by assumption, we then
have $\iota'(f)i_p(x) = i_p(y)i_e(fg) = i_p(y \cdot (fg)) = i_p(x \cdot f)$, and the same
calculation applied to $\iota$ instead of $\iota'$ shows that this is also equal to
$\iota(f)i_p(x)$.  Since elements of the form $i_p(x) i_q(y)^*$ are dense in $\mathcal{NT}_X$, it follows that $\iota = \iota'$.

(2) We first show that $\{ \varepsilon_{t*} \circ i_p\}_{p \in P} $ is a Nica covariant
representation of $X$ in $\mathcal{NT}_{X^t}$. It is a representation because $i$ is a
representation; in particular, if $x \in X_p, y \in X_q$ we have
\[
(\varepsilon_{t*} \circ i_p(x)) (\varepsilon_{t*} \circ i_q(y))
	 = i_p(x + X_p \cdot I_t) i_q (y+ X_q \cdot I_t)
     = i_{pq}(xy + X_{pq} \cdot I_t)
     = \varepsilon_{t*}(i_{pq}(xy)),
\]
and (for $x, y \in X_p$)
\begin{align*}
i_e(\langle x, y \rangle_{A \otimes C([0,1])} + X_e \cdot I_t)
 & =   i_p(x + X_p \cdot I_t)^* i_p(y + X_p \cdot I_t)
    = (\varepsilon_{t*} \circ i_p(x))^* (\varepsilon_{t*} \circ i_p(y)) \\
  & = \varepsilon_{t*} \circ i_e( \langle x, y\rangle_{A \otimes C([0,1])}.
\end{align*}
To see that it is Nica covariant, first note that for $x,y \in X_p$, Equation~\eqref{eq:induced
composition} shows that $(\varepsilon_{t*} \circ i)^{(p)} = \varepsilon_{t*} \circ i^{(p)}$. Now
fix $S \in \mathcal{K}(X_p)$ and $T \in \mathcal{K}(X_q)$. Then
\[(\varepsilon_{t*} \circ i)^{(p)}(S)
(\varepsilon_{t*} \circ i)^{(q)}(T) = \varepsilon_{t*}(i^{(p)}(S)i^{(q)}(T))\] and
$(\varepsilon_{t*} \circ i)^{(p \vee q)}(\iota_p^{p \vee q}(S)\iota_q^{p \vee q}(T)) =
\varepsilon_{t*}(i^{(p \vee q)}(\iota_p^{p \vee q}(S)\iota_q^{p \vee q}(T)))$, and so
$\varepsilon_{t*} \circ i$ is Nica covariant because $i$ is.

The universal property of $\mathcal{NT}_X$ now yields a homomorphism $\varepsilon_{t*}: \mathcal{NT}_X \to
\mathcal{NT}_{X^t}$. This homomorphism is surjective because each $i_p(x + X_p \cdot I_t) =
\varepsilon_{t*}(i_p(x))$ and each $i_e(a) = \varepsilon_{t*}(i_e(a \otimes 1_{C([0,1])}))$
belongs to its range.

To see that $\ker(\varepsilon_{t*}) = I^{\mathcal T}_t$, we first observe that for any $f \in
C_0([0,1]\setminus\{t\}), x \in X_p$,
\[ \varepsilon_{t*}(\iota(f) i_p(x)) = \varepsilon_{t*} (i_p((f \otimes 1_A) \cdot x) = i_p( x \cdot (f \otimes 1_A) + X_p \cdot I_t) = 0,\]
since $x \cdot (f \otimes 1_A) \in X_p \cdot I_t$.  Consequently, $I^{\mathcal T}_t \subseteq \ker \varepsilon_{t*}$.  To establish the other containment, we
 will construct a Nica-covariant
representation $\psi$ of $X^t$ in $\mathcal{NT}_X/I^{\mathcal T}_t$, such that the resulting
homomorphism $\Pi\psi : \mathcal{NT}_{X^t} \to \mathcal{NT}_X/I^{\mathcal T}_t$ satisfies
$  \varepsilon_{t*} \circ \Pi\psi  = \operatorname{id}$. We aim to define
\[
    \psi(i_p(x + X_p \cdot I_t)) = i_p(x) + I^{ \mathcal T}_t.
\]
To see that this is well defined, observe that for all $x,y \in X_p$, we have
\begin{align*}
(i_p(x) + I^{\mathcal T}_t)^*(i_p(y) + I^{\mathcal T}_t)
    &= i_p(x)^*i_p(y) + I^{ \mathcal T}_t
    = i_e(\langle x, y\rangle_{A \otimes C([0,1])}) + I^{\mathcal{T}}_t\\
    &= \langle i_p(x + X_p \cdot I_t), i_p(y + X_p \cdot I_t)\rangle_{A}.
\end{align*}
Hence the formula for $\psi$ determines an isometric map from $X^t_p$ to $\mathcal{NT}_X / I^{
\mathcal T}_t$. An argument like that given for $\varepsilon_{t*} \circ i$ above shows that $\psi$
is a Nica-covariant representation. To see that $\varepsilon_{t*} \circ \psi = \operatorname{id}$,
we compute:
\[
    \varepsilon_{t*}(\psi(i_p(x+ X_p \cdot I_t)) = \varepsilon_{t*}(i_p(x) + I_t^{\mathcal T}) = \varepsilon_{t*}(i_p(x)) = i_p(x+ X_p \cdot I_t).
\]

(3) To see that $\varepsilon_{t*}$ descends to a homomorphism $\tilde \varepsilon_{t*} :
\mathcal{NO}_X \to \mathcal{NO}_{X^t}$, it suffices to show that if $q_t : \mathcal{NT}_{X^t} \to
\mathcal{NO}_{X^t}$ is the quotient map, then $q_t \circ \varepsilon_{t*} \circ i_p$ is a
Cuntz--Pimsner covariant representation of $X$. Since the actions of $A \otimes C([0,1])$ on the
modules $X_p$ are implemented by injective homomorphisms, the modules $\widetilde{X}_p$ of
\cite{sims--yeend} are just the original modules $X_p$, and the homomorphisms
$\tilde{\iota}^{pq}_p : \Ll(X_p) \to \Ll(\widetilde{X}_{pq})$ are the standard inclusions
$\iota^{pq}_p : \Ll(X_p) \to \Ll(X_{pq})$ characterised by $\iota_p^{pq}(S)(xy) = (Sx)y$ for $x
\in X_p$ and $y \in X_q$. Write $\iota[t]^{pq}_p : \Ll(X^t_p) \to \Ll(X^t_{pq})$ for the
corresponding inclusions for the product system $X^t$; and as usual for $T \in \Ll(X_p)$ write
$T_t$ for the map in $\Ll(X^t_p)$ given by $T_t(x + X \cdot I_t) = Tx + X \cdot I_t$. A routine
calculation shows that for $T \in \Ll(X_p)$ we have $\iota[t]^{pq}_p(T_t) = \iota^{pq}_p(T)_t$. So
if $F \subseteq P$ is finite and $T_p \in \mathcal{K}(X_p)$ for each $p \in F$, then
\begin{equation}\label{eq:shift the ts}
\Big(\sum_{p \in F} \iota^s_p(T_p)\Big)_t = \sum_{p \in F} \iota[t]^s_p((T_p)_t)\qquad\text{ for every $s \ge
p$ in $P$.}
\end{equation}
Assume $\sum_{p \in F} \iota^s_p(T_p) = 0$ for large $s$; we will show that $\sum_{p \in F} (q_t
\circ \varepsilon_{t*} \circ i)^{(p)}(T_p) = 0$. From~\eqref{eq:shift the ts} we see that $\sum_{p
\in F} \iota[t]^s_p((T_p)_t) = 0$ for large $s$, and consequently
\[
q_t\Big(\sum_{p \in F} i^{(p)}_t((T_p)_t)\Big)
    = q_t\Big( \sum_{p \in F} \iota_p^s[t]((T_p)_t)\Big)= 0.
\]
Equation~\ref{eq:induced composition} shows that  $(q_t \circ \varepsilon_{t*} \circ i)^{(p)} =
(q_t \circ \varepsilon_{t*}) \circ i^{(p)}$. Hence
\[
\sum_{p \in F} (q_t \circ \varepsilon_{t*} \circ i)^{(p)}((T_p)_t)
    = \sum_{p \in F} (q_t \circ \varepsilon_{t*}) \circ i^{(p)}(T_p) = 0.
\]
That $I^{\mathcal{O}}_t = \{\jmath(f) b : f \in C([0,1]), f(t) = 0, \text{ and } b \in
\mathcal{NO}_X\}$ follows from the definition of $I^{\mathcal{T}}_t$.

(4) Since $I_t^{\mathcal O} = q(I_t^{\mathcal T})= q(\ker(\varepsilon_{t*}))$, we clearly have
$I_t^{\mathcal O} \subseteq \ker (\tilde \varepsilon_{t*})$. To see that $\ker(\tilde
\varepsilon_{t*})= q(I_t^{\mathcal T})$, it suffices to show that if $q_t :
\mathcal{NT}_X/I^{\mathcal{T}}_t \to \mathcal{NO}_X/I^{\mathcal{O}}_t$ is the quotient map, then
the Nica-covariant representation $\psi : X^t \to \mathcal{NT}_X/I^{\mathcal T}_t$ defined above
has the property that $q_t \circ \psi$ is Cuntz--Nica--Pimsner covariant. For each $p \in P$, let
$\phi^t_p : A \to \mathcal{K}(X_p^t)$ be the injective homomorphism that implements the left
action; and let $\phi_p : A \otimes C([0,1]) \to \mathcal{K}(X_p)$ be the corresponding map for
$X$. We invoke \cite[Proposition~5.1(2)]{sims--yeend}, which says that it suffices to show that
for each $a \in A$ and each $p \in P$, we have $(q_t \circ \psi)^{(p)}(\phi^t_p(a)) - (q_t \circ
\psi_e)(a) = 0$. By \cite[Proposition~5.1(2)]{sims--yeend}, we have $i^{(p)}(\phi_p(a \otimes
1_{[0,1]})) - i_e(a \otimes 1_{[0,1]}) \in \ker(q)$. Since the identification of $A \otimes
C([0,1]))/A \otimes C_0([0,1] \setminus\{t\})$ with $A$ carries $a \otimes 1_{[0,1]}$ to $a$, we
have
\begin{align*}
(q_t \circ \psi)^{(p)}(\phi^t_p(a)) - (q_t \circ \psi_e)(a)
    &= q_t(\psi^{(p)}(\phi^t_p(a)) - \psi_e(a))\\
    &= q_t\big(i^{(p)}(\phi_p(a \otimes 1_{[0,1]})) - i_e(a \otimes 1_{[0,1]}) + I^{\mathcal{T}}_t\big)\\
    &= q\big(i^{(p)}(\phi_p(a \otimes 1_{[0,1]})) - i_e(a \otimes 1_{[0,1]})\big) + I^{\mathcal{O}}_t
     = 0.\qedhere
\end{align*}
\end{proof}

\begin{rmk}
It is unclear whether the hypothesis that $p \vee q < \infty$ for all $p,q \in P$ is necessary for
the final assertion of Proposition~\ref{pr:evaluation-maps-commute}(3). The issue is that to
establish the final assertion without this hypothesis, we would need to verify the Cuntz--Pimsner
relation for the map $q \circ \psi$ in the final paragraph of the proof. That is, given a finite
$F \subseteq P$ and elements $T^t_p \in \mathcal{K}(X^t_p)$ such that $\sum_p \iota[t]^s_p(T^t_p)
= 0$ for large $s$, we would need to show that we could find representatives $T_p \in
\mathcal{K}(X_p)$ of the operators $T^t_p$ with the property that $\sum_p \iota^s_p(T_p) = 0$ for
large $s$; it is not clear to us whether this is so.
\end{rmk}

We now prove that the evaluation maps $\varepsilon_t$ for a homotopy of product
systems over $\N^k$ induce $KK$-equivalences of Nica--Toeplitz algebras.

\begin{cor}
Let $X$ be a homotopy of product systems over $\N^k$ with coefficient algebra $A$. Suppose that
$A$ is $\sigma$-unital and each $X_n$ is countably generated. For any $t \in [0,1]$, the class of
the homomorphism $\varepsilon_{t*}$ of Proposition~\ref{pr:evaluation-maps-commute}(2) is a
$KK$-equivalence from $\mathcal{NT}_X$ to $\mathcal{NT}_{X^t}$, and in particular defines an
isomorphism
\[
    K_*(\mathcal{NT}_{X}) \cong K_*(\mathcal{NT}_{X^t}).
\]
\end{cor}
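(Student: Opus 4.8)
The plan is to deduce the statement from the fact---which I establish separately by a short induction---that for any product system $Z$ over $\N^k$ whose coefficient algebra $B$ is $\sigma$-unital and whose fibres $Z_n$ are countably generated, the inclusion $i^Z_e\colon B\hookrightarrow\mathcal{NT}_Z$ of the coefficient algebra is a $KK$-equivalence. Granting this, I apply it to the homotopy $X$ (whose coefficient algebra $C([0,1],A)=C([0,1])\otimes A$ is $\sigma$-unital because $A$ is, and whose fibres are countably generated by hypothesis) and to each $X^t$ (whose coefficient algebra is $A$, and whose fibres $X^t_n=X_n/(X_n\cdot I_t)$ are quotients of countably generated modules, hence countably generated). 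Taking $p=e$ in Proposition~\ref{pr:evaluation-maps-commute}(2), and using that the identification of $X^t_e=C([0,1],A)/I_t$ with $A$ is evaluation at $t$, we obtain $\varepsilon_{t*}\circ i^X_e=i^{X^t}_e\circ\varepsilon_t$, where $\varepsilon_t\colon C([0,1],A)\to A$ is the evaluation homomorphism; hence $[\varepsilon_{t*}]\circ[i^X_e]=[i^{X^t}_e]\circ[\varepsilon_t]$ in $KK$.

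Now $\varepsilon_t$ is a $KK$-equivalence: the inclusion $A\to C([0,1],A)$ as constant functions is a homotopy inverse for it, so homotopy invariance of $KK$ makes $[\varepsilon_t]$ invertible. Thus the right-hand side of the displayed identity is invertible, and since $[i^X_e]$ is invertible too, $[\varepsilon_{t*}]=[i^{X^t}_e]\circ[\varepsilon_t]\circ[i^X_e]^{-1}$ is a $KK$-equivalence from $\mathcal{NT}_X$ to $\mathcal{NT}_{X^t}$, which in particular induces the asserted isomorphism on $K$-theory.

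It remains to prove that $i^Z_e$ is a $KK$-equivalence, by induction on $k$. For $k=0$ we have $\mathcal{NT}_Z=B$ and there is nothing to prove. For the inductive step, let $Z_{<k}$ be the restriction of $Z$ to $\N^{k-1}$, which is a product system over $\N^{k-1}$ with coefficient algebra $B$; by the Nica--Toeplitz analogue of \cite[Theorem~4.7]{fletcher-NYJM} (see also \cite[Theorem~3.4.21]{fletcher-thesis-1}) there is a countably generated Hilbert bimodule $W$ over $\mathcal{NT}_{Z_{<k}}$, together with an isomorphism $\mathcal{NT}_Z\cong\mathcal{T}_W$ carrying the canonical inclusion $\mathcal{NT}_{Z_{<k}}\to\mathcal{NT}_Z$ to the coefficient inclusion $\mathcal{NT}_{Z_{<k}}\hookrightarrow\mathcal{T}_W$; in particular $i^Z_e$ factors as the composite of $i^{Z_{<k}}_e\colon B\to\mathcal{NT}_{Z_{<k}}$ with this coefficient inclusion. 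Since $\mathcal{NT}_{Z_{<k}}$ is again $\sigma$-unital, \cite[Proposition~8.2]{Katsura} together with \cite[Theorem~4.4]{Pimsner} show the coefficient inclusion is a $KK$-equivalence; composing with the inductive hypothesis completes the induction.

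I expect the main obstacle to be the precise invocation of the decomposition $\mathcal{NT}_Z\cong\mathcal{T}_W$: one must confirm that the Nica--Toeplitz counterpart of \cite[Theorem~4.7]{fletcher-NYJM} is available in the required generality and is compatible with the coefficient-algebra inclusions, and one must check that $\sigma$-unitality and countable generation are inherited by $\mathcal{NT}_{Z_{<k}}$ and $W$, so that Pimsner's $KK$-equivalence theorem applies at each stage. It is worth noting that, unlike in the proof of Theorem~\ref{thm:htopy-KK}, no Five Lemma argument is needed here: the Pimsner six-term sequence for a Toeplitz algebra degenerates, so each inductive step collapses to a single composition of $KK$-equivalences and the whole argument reduces to the two-out-of-three property in $KK$.
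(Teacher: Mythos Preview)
Your argument is correct and follows essentially the same route as the paper: both proofs use the commuting square $\varepsilon_{t*}\circ i^X_e = i^{X^t}_e\circ\varepsilon_t$, the contractibility of $[0,1]$, and the fact that the coefficient inclusions $i^X_e$ and $i^{X^t}_e$ are $KK$-equivalences, concluding via two-out-of-three. The only difference is that the paper obtains the $KK$-equivalence of the coefficient inclusions by a direct citation of \cite[Corollary~4.18]{fletcher-NYJM}, whereas you reprove that result by induction via the decomposition $\mathcal{NT}_Z\cong\mathcal{T}_W$; your concern about the availability of the ``Nica--Toeplitz analogue'' is therefore moot, since the packaged statement you need is already \cite[Corollary~4.18]{fletcher-NYJM}.
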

\begin{proof}
By \cite[Corollary 4.18]{fletcher-NYJM}, the classes of $i: C([0,1], A) \to \mathcal{NT}_X$ and $i^t : A
\to \mathcal{NT}_{X^t}$ are $KK$-equivalences. Since $[0,1]$ is contractible, the evaluation map
$\varepsilon_t^0 : C([0,1], A) \to A$ also induces a $KK$-equivalence. We have $i^t \circ
\varepsilon^0_t = \varepsilon_{t*} \circ i$, and so in $KK$-theory,  we have
\[
[\varepsilon_{t*}] = [i^t] \hatimes [\varepsilon^0_t] \hatimes [i]^{-1}.
\]
Since the three factors on the right-hand side are $KK$-equivalences, we deduce that
$[\varepsilon_{t*}]$ is a $KK$-equivalence as well.
\end{proof}

Our main theorem states that homotopy of product systems over $\N^k$ preserves the $K$-theory of
their Cuntz--Nica--Pimsner algebras.

\begin{thm}
\label{thm:htopy-KK} Let $X$ be a homotopy of product systems over $\N^k$ whose coefficient
algebra $A\otimes C([0,1])$ acts faithfully by compact operators. For any $t \in [0,1]$, the
evaluation map $\varepsilon_t$ induces an isomorphism
\[
    K_*(\mathcal{NO}_{X^t}) \cong K_*(\mathcal{NO}_{X}).
\]
In particular, if $A$ satisfies the UCT and each $X_n$ is countably generated, then each $\mathcal{NO}_{X^t}$ is $KK$-equivalent to
$\mathcal{NO}_{X}$.
\end{thm}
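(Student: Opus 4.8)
The plan is to leverage Proposition~\ref{pr:evaluation-maps-commute}, which already realises $\mathcal{NO}_X$ as a $C([0,1])$-algebra with fibre $\mathcal{NO}_{X^t}$ at $t$ and quotient maps $\tilde\varepsilon_{t*}$, and then show that these evaluation maps are $K$-equivalences by an induction on $k$ modelled on Elliott's computation of the $K$-theory of the noncommutative torus, with Pimsner's six-term exact sequence for Cuntz--Pimsner algebras \cite{Katsura, Pimsner} playing the role of the Pimsner--Voiculescu sequence. It is convenient to run the induction over a wider class of coefficient algebras: call a $C([0,1])$-algebra $B$ \emph{good} if each evaluation $B \to B_t$ induces an isomorphism on $K$-theory. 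The inductive claim is that whenever $B$ is a good $C([0,1])$-algebra and $Y$ is a fibred product system over $\N^j$ with coefficient algebra $B$ whose left actions are all injective and by compacts, then $\mathcal{NO}_Y$ is a good $C([0,1])$-algebra with fibres $\mathcal{NO}_{Y^t}$. That $\mathcal{NO}_Y$ is a $C([0,1])$-algebra with these fibres and evaluation maps is exactly Proposition~\ref{pr:evaluation-maps-commute}, whose proof is insensitive to the coefficient algebra having the special form $C([0,1],A)$ rather than being an arbitrary $C([0,1])$-algebra; and by Lemma~\ref{lem:bundle of product systems} injectivity and compactness of the left actions are inherited by each fibre $Y^t$. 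The case $j=0$ is trivial, since then $\mathcal{NO}_Y = B$; and the theorem follows from the case $j=k$ applied to $B = C([0,1],A)$, which is good because evaluation $C([0,1],A)\to A$ at $t$ is a $*$-homotopy equivalence, with homotopy inverse the inclusion of $A$ as the constant functions.

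For the inductive step, write $Y'$ for the restriction of $Y$ to the first $j-1$ coordinates of $\N^j$ and set $C := \mathcal{NO}_{Y'}$. I would use (proving it if no convenient reference is at hand) the standard iterated description of Cuntz--Nica--Pimsner algebras of product systems over $\N^j$ with injective left actions by compacts: $\mathcal{NO}_Y \cong \mathcal{O}_Z$, where $Z$ is the $C^*$-correspondence over $C$ assembled from the $j$-th coordinate module $Y_{e_j}$, its left $C$-action built from the commutation isomorphisms $Y'_p \otimes_B Y_{e_j} \cong Y_{e_j} \otimes_B Y'_p$, and this left action is again injective and by compacts. This construction is compatible with the fibring: applying the inductive hypothesis to $Y'$ makes $C$ a good $C([0,1])$-algebra, $Z$ becomes a fibred correspondence over $C$, and its fibre $Z^t$ is the correspondence over $C_t \cong \mathcal{NO}_{(Y')^t} = \mathcal{NO}_{(Y^t)'}$ assembled from $(Y^t)_{e_j}$, so that $\mathcal{O}_{Z^t} \cong \mathcal{NO}_{Y^t}$. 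The inductive step therefore reduces to the following Cuntz--Pimsner analogue of the crossed-product step in Elliott's argument: if $B$ is a good $C([0,1])$-algebra and $Z$ is a fibred $C^*$-correspondence over $B$ with injective left action by compacts, then $\mathcal{O}_Z$ is a good $C([0,1])$-algebra with fibres $\mathcal{O}_{Z^t}$.

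To prove this I would apply the six-term exact sequence \cite[Theorem~8.6]{Katsura}. Because the left action of $B$ on $Z$ is injective and by compacts, the Katsura ideal $J_Z$ is all of $B$, so the sequence reads
\[
\cdots \longrightarrow K_i(B) \xrightarrow{1-[Z]} K_i(B) \xrightarrow{(j_B)_*} K_i(\mathcal{O}_Z) \xrightarrow{\partial} K_{i+1}(B) \longrightarrow \cdots,
\]
with $[Z]$ the self-map of $K_*(B)$ given by Kasparov multiplication by the class of $Z$, and likewise for each fibre with $B$, $Z$, $\mathcal{O}_Z$ replaced by $B_t$, $Z^t$, $\mathcal{O}_{Z^t}$. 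The homomorphisms $\varepsilon_t\colon B\to B_t$ and $\tilde\varepsilon_{t*}\colon\mathcal{O}_Z\to\mathcal{O}_{Z^t}$ induce a morphism between these two sequences: the squares coming from $(j_B)_*$ commute because $\tilde\varepsilon_{t*}\circ j_B = j_{B_t}\circ\varepsilon_t$; the squares coming from $1-[Z]$ commute because $Z^t\cong Z\otimes_{\varepsilon_t}B_t$, so Kasparov multiplication is compatible with $\varepsilon_t$ by associativity of the Kasparov product; and the squares coming from the boundary maps commute by naturality of \cite[Theorem~8.6]{Katsura} under the morphism of correspondences $(Z,B)\to(Z^t,B_t)$, equivalently by naturality in $Z$ of the exact triangle $B\xrightarrow{1-[Z]}B\to\mathcal{O}_Z$ in $KK$-theory. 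Since the two flanking vertical maps $K_*(B)\to K_*(B_t)$ are isomorphisms by goodness of $B$, the five lemma forces the middle one $K_*(\mathcal{O}_Z)\to K_*(\mathcal{O}_{Z^t})$ to be an isomorphism. I expect the main obstacle to be precisely assembling this naturality: both the iterated Cuntz--Pimsner description of $\mathcal{NO}_Y$ together with its compatibility with the fibring, and the functoriality of the six-term sequence; once these are in place the five-lemma step is purely formal.

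For the final ``in particular'' assertion, assume in addition that $A$ is separable and satisfies the UCT and each $X_n$ is countably generated. I would run the same induction, using that $\mathcal{T}_Z$ is $KK$-equivalent to its coefficient algebra when $Z$ is countably generated over a separable algebra \cite[Theorem~4.4]{Pimsner}, that the kernel of $\mathcal{T}_Z\to\mathcal{O}_Z$ is Morita equivalent to the coefficient algebra, and that the UCT class is closed under extensions, to conclude that $\mathcal{NO}_X$ and each $\mathcal{NO}_{X^t}$ lie in the UCT class. The universal coefficient theorem then identifies the invertible elements of $KK(\mathcal{NO}_X,\mathcal{NO}_{X^t})$ with those acting by isomorphisms on $K$-theory, so the $K$-theory isomorphism induced by $\tilde\varepsilon_{t*}$ is implemented by a $KK$-equivalence $[\tilde\varepsilon_{t*}]\in KK(\mathcal{NO}_X,\mathcal{NO}_{X^t})$.
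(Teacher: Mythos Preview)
Your proposal is correct and follows essentially the same strategy as the paper: induction on $k$, the iterated Cuntz--Pimsner description of $\mathcal{NO}_X$ as $\mathcal{O}_Z$ for a correspondence $Z$ over $\mathcal{NO}_{X_{<k}}$, Pimsner's six-term sequence, naturality, and the Five Lemma. The paper organises the bookkeeping slightly differently---it keeps the coefficient algebra fixed as $C([0,1],A)$ throughout and invokes Fletcher's explicit construction \cite[Theorem~4.7, Proposition~4.3]{fletcher-NYJM} together with the naturality statement \cite[Theorem~4.4]{KPS7}, thereby avoiding the need to generalise Proposition~\ref{pr:evaluation-maps-commute} to arbitrary $C([0,1])$-algebras---while you abstract to ``good'' $C([0,1])$-algebras and reduce to a single-correspondence step; and for the UCT assertion the paper simply cites \cite[Corollary~5.21]{fletcher-NYJM} rather than rederiving closure under the bootstrap class. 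These are purely organisational differences: both routes rest on the same ideas and the same key inputs.
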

\begin{proof}
We will proceed by induction on $k$. Recall that $X_0 \cong C([0,1],A)$, and so $X_0^t \cong A$.
Hence, if $k = 0$ then $\mathcal{NO}_X = C([0,1], A)$ and $\mathcal{NO}_{X^t} = A$, and
$\widetilde{\varepsilon_{t*}}$ is just the evaluation map $\varepsilon_t : C([0,1], A) \to A$.
Thus $\widetilde{\varepsilon_{t*}}$ induces a $KK$-equivalence.

Now suppose that the result holds for all product systems over $\N^{k-1}$. Let $X_{<k}$ be the
product system over $\N^{k-1}$ obtained by restriction of $X$, which is also a homotopy of product
systems with coefficient algebra $A$. Theorem~4.7 of \cite{fletcher-NYJM} (see also
\cite[Theorem~3.4.21]{fletcher-thesis-1}) shows that there is a product system $Y$ of Hilbert
$\mathcal{NO}_{X_{<k}}$-bimodules over $\N$ in which the left action is faithful and by compact
operators and such that $\mathcal{NO}_X \cong \mathcal{NO}_Y$. As $Y$ is a product system over
$\N$,  $\mathcal{NO}_Y = \mathcal{O}_{Y_1}$.

By definition (see \cite[Proposition~4.3]{fletcher-NYJM} or
\cite[Proposition~3.4.7]{fletcher-thesis-1}), if $\jmath : \mathcal{NO}_{X_{<k}} \to
\mathcal{NO}_{X}$ is the homomorphism induced by the inclusion $X_{<k} \hookrightarrow X$, then
the module $Y_1$ is equal to $\clsp\{i_{e_k}(\xi)\jmath(b) : \xi \in X_{e_k}, b \in
\mathcal{NO}_{X_{<k}}\} \subseteq \mathcal{NO}_X$, and similarly for $Y^t_1$ for each fixed $t$.
So the map $\widetilde{\varepsilon_{t*} }: \mathcal{NO}_X \to \mathcal{NO}_{X^t}$ restricts to a bimodule map
$(\varepsilon^Y_{t}, {\varepsilon^{<k}_{t*}}) : (Y_1, \mathcal{NO}_{X_{<k}}) \to (Y_1^t,
\mathcal{NO}_{X^t_{<k}})$, and by definition the homomorphism $\mathcal{NO}_X \to
\mathcal{NO}_{X^t}$ induced by this bimodule map is the original $\widetilde{\varepsilon_{t*}}$. Hence the
final statement (concerning naturality) of \cite[Theorem~4.4]{KPS7}, shows that Pimsner's six-term
exact sequences in $KK$-theory \cite[Theorem~4.9(1)]{Pimsner} applied with $B = \C$ to each of
$Y_1$ and $Y^t_1$, assemble into the following commuting diagram:
\[
\begin{tikzpicture}[yscale=1.2, xscale=1.4]
 \node[inner sep= 1pt] (I-1) at (4.5,-2.5) {$K_1(\mathcal{NO}_{X_{<k}})$.};
    \node[inner sep= 1pt] (A-1) at (0, -2.5) {$K_1(\mathcal{NO}_{X_{<k}})$};
    \node[inner sep= 1pt] (quo-1) at (-4.5,-2.5) {$K_1(\mathcal{NO}_{X})$};
    \node[inner sep= 1pt] (quo-0) at (4.5,2.5) {$K_0(\mathcal{NO}_{X})$};
    \node[inner sep= 1pt] (A-0) at (0,2.5) {$K_0(\mathcal{NO}_{X_{<k}})$};
    \node[inner sep= 1pt] (I-0) at (-4.5,2.5) {$K_0(\mathcal{NO}_{X_{<k}})$};
\draw[->] (I-1) to node[pos=0.5, above] {$1 - [Y_1]$} (A-1);
\draw[->] (A-1) to node[pos=0.5, above] {$[i_0]$} (quo-1) {};
\draw[->] (quo-1) to (I-0) {};
\draw[->] (I-0) to node[pos=0.5, above] {$1 - [Y_1]$} (A-0);
\draw[->] (A-0) to node[pos=0.5, above] {$[i_0]$} (quo-0) {};
\draw[->] (quo-0) to (I-1) {};

    \node[inner sep= 1pt] (I-1-in) at (3,-1) {$K_1(\mathcal{NO}_{X^t_{<k}})$};
    \node[inner sep= 1pt] (A-1-in) at (0, -1) {$K_1(\mathcal{NO}_{X^t_{<k}})$};
    \node[inner sep= 1pt] (quo-1-in) at (-3,-1) {$K_1(\mathcal{NO}_{X^t} )$};
    \node[inner sep= 1pt] (quo-0-in) at (3,1) {$K_0(\mathcal{NO}_{X^t} )$};
    \node[inner sep= 1pt] (A-0-in) at (0,1) {$K_0(\mathcal{NO}_{X^t_{<k}})$};
    \node[inner sep= 1pt] (I-0-in) at (-3,1) {$K_0(\mathcal{NO}_{X^t_{<k}})$};
\draw[->] (I-1-in) to node[pos=0.5, above] {$1 - [Y^t_1]$} (A-1-in) {};
\draw[->] (A-1-in) to node[pos=0.5, above] {$[i^t_0]$} (quo-1-in) {};
\draw[->] (quo-1-in) to (I-0-in) {};
\draw[->] (I-0-in) to node[pos=0.5, above] {$1 - [Y^t_1]$} (A-0-in) {};
\draw[->] (A-0-in) to node[pos=0.5, above] {$[i^t_0]$} (quo-0-in) {};
\draw[->] (quo-0-in) to (I-1-in) {};

\draw[->] (I-1) to node[pos=0.75, anchor=north east, inner sep=0.5pt] {$[\varepsilon^{<k}_{t*}]$} (I-1-in) {};
\draw[->] (A-1) to node[pos=0.5, anchor=east, inner sep=1pt] {$[\varepsilon^{<k}_{t*}]$} (A-1-in) {};
\draw[->] (quo-1) to node[pos=0.5, anchor=south east, inner sep=0.5pt] {$[\varepsilon_{t*}]$} (quo-1-in) {};
\draw[->] (I-0) to node[pos=0.5, anchor=south west, inner sep=0.5pt] {$[\widetilde{\varepsilon^{<k}_{t*}}]$} (I-0-in) {};
\draw[->] (A-0) to node[pos=0.5, anchor=west, inner sep=1pt] {$[\varepsilon^{<k}_{t*}]$} (A-0-in) {};
\draw[->] (quo-0) to node[pos=0.5, anchor=north west, inner sep=0.5pt] {$[\widetilde{\varepsilon_{t*}}]$} (quo-0-in) {};
\end{tikzpicture}
\]

The inductive hypothesis implies that the upper-left, upper-middle, lower-right and lower-middle
outside-to-inside maps are isomorphisms, and so the Five Lemma shows that the remaining two
outside-to-inside maps are isomorphisms as well.

Corollary 5.21 of \cite{fletcher-NYJM} establishes that if $A$ satisfies the Universal Coefficient
Theorem, then our hypotheses on $A$ and $X$ imply that $\mathcal{NO}_X$ and $\mathcal{NO}_{X^t}$
also satisfy the UCT.  Consequently, the above isomorphisms in $K$-theory give a $KK$-equivalence
in this case.
\end{proof}

\section{Applications to \texorpdfstring{$k$}{k}-graphs}\label{sec:k-graphs}

The main result of this section, Theorem~\ref{thm:twisted 2-graph K-th}, uses
Theorem~\ref{thm:htopy-KK} to show that any two twisted Cuntz--Krieger algebras of $2$-graphs with
isomorphic skeletons have isomorphic $K$-groups. Our approach relies on casting the notion of
homotopy of 2-cocycles for $k$-graphs (previously studied in relation to $K$-theory in \cite{KPS5,
gillaspy-PJM}) in the language of product systems. Most of the section is aimed at proving this
result about 2-graphs, but we also pause to investigate some potential applications to $k$-graphs
for $k \ge 2$. Consequently, many of the results that contribute to the proof of
Theorem~\ref{thm:twisted 2-graph K-th} are stated for arbitrary $k$.

We begin by introducing the notation and definitions needed for our construction of the product systems associated to twisted $k$-graph $C^*$-algebras.

\subsection{Background and notation for twisted \texorpdfstring{$k$}{k}-graph \texorpdfstring{$C^*$}{C*}-algebras}

We write $\N^k$ for the monoid of $k$-tuples of nonnegative integers under coordinatewise
addition, and sometimes regard it as a small category with one object. We write $\{e_1, \dots,
e_k\}$ for the canonical generators of $\N^k$. Recall from \cite{kp2} that a {\em $k$-graph}, or a higher-rank graph of rank $k$, is a
countable small category $\Lambda$ equipped with a functor $d : \Lambda \to \N^k$ that satisfies
the {\em factorisation property}: whenever $d(\lambda) = m+n$ there exist unique $\mu \in d^{-1}(m)$ and
$\nu \in d^{-1}(n)$ such that $\lambda = \mu\nu$. We write $\Lambda^n := d^{-1}(n)$. The
factorisation property guarantees that $\Lambda^0 = \{\operatorname{id}_o : o \in
\operatorname{Obj}(\Lambda)\}$. Given subsets $E,F \subseteq \Lambda$, we write $EF = \{\mu\nu :
\mu \in E, \nu \in F, s(\mu) = r(\nu)\}$. In the instance of singleton sets, we simplify notation
by writing $\mu F$ rather than $\{\mu\} F$ and $E \nu$ rather than $E \{\nu\}$.

We say that the $k$-graph $\Lambda$ is \emph{row finite with no sources} if $0 < |v\Lambda^n| <
\infty$ for all $v \in \Lambda^0$ and $n \in \N^k$.  This hypothesis, which we will assume
throughout this paper, facilitates the construction and analysis of the  $C^*$-algebra associated
to the $k$-graph.

As introduced in \cite{kps3, KPS4}, there are two types of 2-cocycle on a $k$-graph $\Lambda$:
categorical and cubical. However, \cite[Theorem~4.15]{KPS4} exhibits an isomorphism between the
cubical and categorical second cohomology groups (see also \cite{GW}).  Moreover, by
\cite[Corollary~5.7]{KPS4}, this isomorphism induces an isomorphism of the associated twisted
$C^*$-algebras, so one may freely choose to work with categorical or cubical 2-cocycles without
loss of generality. We have found cubical 2-cocycles to be a more natural framework for describing
the product systems associated to $k$-graphs.

To define cubical cocycles, observe that if $1 \le i_1 < i_2 < i_3 \le k$, and if $d(\lambda) =
e_{i_1} + e_{i_2} + e_{i_3}$, then the factorisation property shows that for each $j \in \{1, 2,
3\}$ there are factorisations $\lambda = F^0_j(\lambda)\beta_j = \alpha_j F^1_j(\lambda)$ such
that $d(\alpha_j) = d(\beta_j) = e_{i_j}$.
\[
\begin{tikzpicture}[xscale=2.2,yscale=2]
\begin{scope}
    \node (flo) at (0,0,1) {};
    \node (flt) at (0,1,1) {};
    \node (fro) at (1,0,1) {};
    \node (frt) at (1,1,1) {};
    \node (alo) at (0,0,0) {};
    \node (alt) at (0,1,0) {};
    \node (aro) at (1,0,0) {};
    \node (art) at (1,1,0) {};
    \node at (1, 0.5, 0.5) {\tiny$F^1_1 (\lambda)$};
    \draw[->] (fro) to node[pos=0.5, above] {$\alpha_1$} (flo) {};
    \draw[->, dashed] (art) to (aro) {};
    \draw[->, dashed] (frt) to (fro) {};
    \draw[->, dotted] (art) to (frt) {};
    \draw[->, dotted] (aro) to (fro) {};
\end{scope}
\begin{scope}[yshift=-1.7cm]
    \node (flo) at (0,0,1) {};
    \node (flt) at (0,1,1) {};
    \node (fro) at (1,0,1) {};
    \node (frt) at (1,1,1) {};
    \node (alo) at (0,0,0) {};
    \node (alt) at (0,1,0) {};
    \node (aro) at (1,0,0) {};
    \node (art) at (1,1,0) {};
    \node at (0, 0.5, 0.5) {\tiny$F^0_1(\lambda)$};
    \draw[->] (art) to node[pos=0.5, above] {$\beta_1$} (alt) {};
    \draw[->, dashed] (alt) to (alo) {};
    \draw[->, dashed] (flt) to (flo) {};
    \draw[->, dotted] (alt) to (flt) {};
    \draw[->, dotted] (alo) to (flo) {};
\end{scope}
\begin{scope}[xshift=2.2cm]
    \node (flo) at (0,0,1) {};
    \node (flt) at (0,1,1) {};
    \node (fro) at (1,0,1) {};
    \node (frt) at (1,1,1) {};
    \node (alo) at (0,0,0) {};
    \node (alt) at (0,1,0) {};
    \node (aro) at (1,0,0) {};
    \node (art) at (1,1,0) {};
    \node at (0.5, 0, 0.5) {\tiny$F^0_2(\lambda)$};
    \draw[->] (aro) to (alo) {};
    \draw[->] (fro) to (flo) {};
    \draw[->, dotted] (alo) to (flo) {};
    \draw[->, dotted] (aro) to (fro) {};
    \draw[->, dashed] (art) to node[pos=0.5, left] {$\beta_2$}  (aro) {};
\end{scope}
\begin{scope}[xshift=2.2cm, yshift=-1.7cm]
    \node (flo) at (0,0,1) {};
    \node (flt) at (0,1,1) {};
    \node (fro) at (1,0,1) {};
    \node (frt) at (1,1,1) {};
    \node (alo) at (0,0,0) {};
    \node (alt) at (0,1,0) {};
    \node (aro) at (1,0,0) {};
    \node (art) at (1,1,0) {};
    \node at (0.5, 1, 0.5) {\tiny$F^1_2(\lambda)$};
    \draw[->, dashed] (flt) to node[pos=0.5, left] {$\alpha_2$} (flo) {};
    \draw[->, dotted] (alt) to (flt) {};
    \draw[->, dotted] (art) to (frt) {};
    \draw[->] (art) to (alt) {};
    \draw[->] (frt) to (flt) {};
\end{scope}
\begin{scope}[xshift=4.4cm]
    \node (flo) at (0,0,1) {};
    \node (flt) at (0,1,1) {};
    \node (fro) at (1,0,1) {};
    \node (frt) at (1,1,1) {};
    \node (alo) at (0,0,0) {};
    \node (alt) at (0,1,0) {};
    \node (aro) at (1,0,0) {};
    \node (art) at (1,1,0) {};
    \node at (0.5, 0.5, 0) {\tiny$F^1_3(\lambda)$};
    \draw[->] (aro) to (alo) {};
    \draw[->] (art) to (alt) {};
    \draw[->, dashed] (art) to (aro) {};
    \draw[->, dashed] (alt) to (alo) {};
    \draw[->, dotted] (alo) to node[pos=0.5, anchor=south east] {$\alpha_3$}  (flo) {};
\end{scope}
\begin{scope}[xshift=4.4cm, yshift=-1.7cm]
    \node (flo) at (0,0,1) {};
    \node (flt) at (0,1,1) {};
    \node (fro) at (1,0,1) {};
    \node (frt) at (1,1,1) {};
    \node (alo) at (0,0,0) {};
    \node (alt) at (0,1,0) {};
    \node (aro) at (1,0,0) {};
    \node (art) at (1,1,0) {};
    \node at (0.5, 0.5, 1) {\tiny$F^0_3(\lambda)$};
    \draw[->] (fro) to (flo) {};
    \draw[->] (frt) to (flt) {};
    \draw[->, dashed] (frt) to (fro) {};
    \draw[->, dashed] (flt) to (flo) {};
    \draw[->, dotted] (art) to node[pos=0.5, anchor=south east] {$\beta_3$}  (frt) {};
\end{scope}
\end{tikzpicture}
\]
A \emph{cubical 2-cocycle} on a $k$-graph $\Lambda$ is a function $\phi :
\bigsqcup_{i\not=j}\Lambda^{e_i + e_j} \to \T$ that satisfies the following cocycle identity:
whenever $1 \le i_1 < i_2 < i_3 \le k$ and $d(\lambda) = e_{i_1} + e_{i_2} + e_{i_3}$ as above, we
have
\[
\phi(F^0_1(\lambda))\phi(F^1_2(\lambda))\phi(F^0_3(\lambda))
    = \phi(F^1_1(\lambda))\phi(F^0_2(\lambda))\phi(F^1_3(\lambda)).
\]
Intuitively, the products of the values over the opposing hemispheres centred on the
top-front-left vertex (the bottom row above) and the bottom-back-right vertex (the top row above)
of a cube are equal.

If $\phi$ is a cubical $2$-cocycle on a row-finite $k$-graph $\Lambda$ with no sources, then a {\em
Cuntz--Krieger $\phi$-representation} of $\Lambda$ in a $C^*$-algebra consists of mutually
orthogonal projections $\{p_v : v \in \Lambda^0\}$ and partial isometries $\{s_\mu : \mu \in
\bigsqcup_i \Lambda^{e_i}\}$ satisfying
\begin{enumerate}
\item $s^*_\mu s_\mu = p_{s(\mu)}$ for all $\mu \in \bigsqcup_i \Lambda^{e_i}$,
\item $s_{\mu'} s_{\nu'} = \phi(\mu\nu)s_\mu s_\nu$ whenever $1 \le i < j \le k$ and $\mu,\mu'
    \in \Lambda^{e_i}$ and $\nu,\nu' \in \Lambda^{e_j}$ satisfy $\mu\nu = \nu'\mu'$, and
\item $p_v = \sum_{\mu \in v\Lambda^{e_i}} s_\mu s^*_\mu$ for all $v \in \Lambda^0$ and $i \le
    k$.
\end{enumerate}
The twisted $C^*$-algebra $C^*_\phi(\Lambda)$ is the universal $C^*$-algebra generated by a
Cuntz--Krieger $\phi$-representation of $\Lambda$. If $\phi$ is the trivial 2-cocycle, so that
$\phi(\lambda) = 1$ for all $\lambda \in \bigsqcup_{i \not= j} \Lambda^{e_i + e_j}$, then
$C^*_\phi(\Lambda) \cong C^*(\Lambda)$ is the usual $k$-graph $C^*$-algebra.

The following notions, while not standard in the literature on $k$-graphs, underlie our results in this section.
Given directed graphs $E, F$ with common vertex set $E^0 = F^0$, and given $v,w \in E^0$, we write
\[
E^1F^1 := \{ef : e \in E^1, f\in F^1, \text{ and }s(e) = r(f)\},
\]
and  for $v,w \in E^0$ we write
\[
v E^1 F^1 w = \{ef \in E^1F^1 : r(e) = v\text{ and }s(f) = w\}.
\]

By a \emph{$k$-skeleton}, we mean a tuple $E = (E_1, \dots, E_k)$ of row-finite directed graphs
with no sources and with common vertex set $E^0$ such that for all $v,w\in E^0$ and all $i \not= j
\le k$, we have
\[
|v E^1_i E^1_j w| = |vE^1_jE^1_i w|.
\]
We say that two $k$-skeletons $(E_1, \ldots, E_k)$ and $(F_1, \ldots, F_k)$ are {\em isomorphic}
if there exist bijections $\rho^0: E_0 \to F_0$ and $\rho^{i}: E_i \to F_i$ such that $\rho^0
\circ s = s \circ \rho^i, \rho^0 \circ r = r \circ \rho^i$ for $1 \leq i \leq k$.

Every $k$-graph $\Lambda$ gives rise to a $k$-skeleton (namely $E_i = \Lambda^{e_i}$).  The
converse is not true; however, the additional structure needed for a  $k$-skeleton  to give rise
to a $k$-graph is described in \cite{HRSW}.

\subsection{Unitary cocycles and higher-rank graphs}
The main result of this section is the following.

\begin{thm}\label{thm:twisted 2-graph K-th}
Suppose that $\Lambda$ and $\Gamma$ are row-finite $2$-graphs with isomorphic skeletons.  Let
$\phi : \Lambda^{(1,1)} \to \T$ and $\psi : \Gamma^{(1,1)} \to \T$ be cubical $2$-cocycles. Then
$K_*(C^*_\phi(\Lambda)) \cong K_*(C^*_\psi(\Gamma))$.
\end{thm}

The next lemma underlies Definition~\ref{def:unitary cocycle} below.

\begin{lemma}\label{lem:U's induce iso}
Let $E = (E_1, E_2)$ be a $2$-skeleton. For each $v,w \in E^0$ suppose that $U(v,w)$ is a unitary
operator $U(v,w) \in \mathcal{U}(\C^{v E^1_2 E^1_1 w}, \C^{v E^1_1 E^1_2 w})$. Let $A :=
C_0(E^0)$, and for $i = 1,2$, let $X_i := X(E_i)$ be the graph bimodule of $E_i$ as in
Example~\ref{ex:graph module}. Then there is an isomorphism $ U : X_2 \otimes_A X_1 \to X_1
\otimes_A X_2$ such that $ U(\delta_f \otimes \delta_e) = U(r(f),s(e))(\delta_f \otimes \delta_e)$
for all $fe \in E^1_2E^1_2$.
\end{lemma}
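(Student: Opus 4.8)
The plan is to build the map $U$ fibrewise over the vertex pairs, using the fact that, by definition of the graph bimodule and the balanced tensor product, both $X_2 \otimes_A X_1$ and $X_1 \otimes_A X_2$ decompose as orthogonal direct sums indexed by vertices. Concretely, first I would observe that for any row-finite graph $F$ with no sources, $X(F)$ has orthonormal-like basis $\{\delta_e : e \in F^1\}$ with $\langle \delta_e, \delta_f\rangle_A = \delta_{e,f}\,\delta_{s(e)}$ and $\delta_e \cdot \delta_{s(e)} = \delta_e$, $\delta_{r(e)}\cdot\delta_e = \delta_e$. Consequently, in $X_2 \otimes_A X_1$ the elementary tensor $\delta_f \otimes \delta_e$ is nonzero precisely when $s(f) = r(e)$ (so that $fe \in E^1_2 E^1_1$), and the family $\{\delta_f \otimes \delta_e : fe \in E^1_2 E^1_1\}$ spans a dense subspace; moreover $\langle \delta_f \otimes \delta_e, \delta_{f'}\otimes \delta_{e'}\rangle_A = \delta_{(f,e),(f',e')}\,\delta_{s(e)}$, so these vectors are ``orthonormal'' and $X_2 \otimes_A X_1 = \bigoplus_{w \in E^0} (X_2\otimes_A X_1)\cdot \delta_w$, with the $w$-summand spanned by those $\delta_f\otimes\delta_e$ with $s(e) = w$. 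Similarly, the left action satisfies $\delta_v \cdot(\delta_f\otimes\delta_e) = (\delta_v\cdot\delta_f)\otimes\delta_e$, which is $\delta_f\otimes\delta_e$ if $r(f)=v$ and $0$ otherwise, so the $v$-cosummand on the left is spanned by $\{\delta_f\otimes\delta_e : r(f)=v\}$. Hence $(X_2\otimes_A X_1)$ has an internal direct-sum decomposition indexed by pairs $(v,w)$, with $(v,w)$-block having orthonormal basis indexed by $vE^1_2E^1_1 w$, i.e. isometrically identified with $\C^{vE^1_2E^1_1w}$; and the identical analysis identifies the $(v,w)$-block of $X_1\otimes_A X_2$ with $\C^{vE^1_1E^1_2w}$.

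Next I would define $U$ on the dense spanning set by the stated formula $U(\delta_f\otimes\delta_e) = U(r(f),s(e))(\delta_f\otimes\delta_e)$, interpreting the right-hand side via the identifications above: $U(r(f),s(e))$ is a unitary from $\C^{r(f)E^1_2E^1_1s(e)}$ to $\C^{r(f)E^1_1E^1_2s(e)}$, which we apply to the basis vector $\delta_f\otimes\delta_e$ of the $(r(f),s(e))$-block and read the result back inside $X_1\otimes_A X_2$. Since $U$ is defined blockwise as a direct sum of the unitaries $U(v,w)$ over all $(v,w)\in E^0\times E^0$ (with the $(v,w)$-block of the domain being zero unless $vE^1_2E^1_1w\neq\emptyset$, which by the $k$-skeleton condition $|vE^1_2E^1_1w|=|vE^1_1E^1_2w|$ happens iff the codomain block is also nonzero), it extends to a well-defined bounded operator $X_2\otimes_A X_1\to X_1\otimes_A X_2$. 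That $U$ is a right $A$-module map is immediate because $A = C_0(E^0)$ acts diagonally with respect to the $(v,w)$-decomposition and $U$ respects the blocks; that it is a left $A$-module map is likewise immediate; and $U$ preserves the $A$-valued inner product because on each block it is the unitary $U(v,w)$ between spaces whose inner products are the standard ones scaled by $\delta_w$. Finally, $U$ is surjective because each $U(v,w)$ is, so $U$ is a Hilbert $A$-bimodule isomorphism; adjointability with $U^* = \bigoplus_{v,w} U(v,w)^*$ follows from the same block decomposition.

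The only real subtlety — and the step I would be most careful about — is well-definedness of $U$ on the balanced tensor product: one must check that the prescription respects the relations $\delta_f\cdot a \otimes \delta_e = \delta_f\otimes a\cdot\delta_e$ and, more importantly, that distinct elementary tensors representing the same element of $X_2\otimes_A X_1$ are sent to the same place. This is handled cleanly by the orthonormal block description above: since the $\delta_f\otimes\delta_e$ with $fe\in E^1_2E^1_1$ form an honest orthonormal basis of $X_2\otimes_A X_1$ (no further relations among them), defining $U$ on this basis and extending linearly and continuously is unambiguous, and then one checks post hoc that $U$ agrees with the balancing relations on arbitrary elementary tensors $\delta_f\cdot a\otimes\delta_e$ by expanding $a = \sum_v a(v)\delta_v$. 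I would write out this basis argument carefully and then let the module-map and inner-product-preservation verifications be the routine ``simple calculation'' they are.
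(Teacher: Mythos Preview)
Your proposal is correct and follows essentially the same approach as the paper: decompose both tensor products into orthogonal blocks indexed by vertex pairs $(v,w)$, apply the given unitaries $U(v,w)$ blockwise, and check that the resulting map preserves the $A$-valued inner product and is surjective. The only difference is cosmetic: the paper first passes through the auxiliary graph modules $X_{21} := X(E^0, E^1_2E^1_1, r, s)$ and $X_{12} := X(E^0, E^1_1E^1_2, r, s)$ via the identification $X_{ij} \cong X_i \otimes_A X_j$ (citing \cite{RaeS}) and carries out the inner-product computation there, whereas you work directly with the balanced tensor products themselves.
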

\begin{proof}
Let $X_{12}$ and $X_{21}$ be the graph modules for the directed graphs $(E^0, E^1_1E^1_2, r, s)$
and $(E^0, E^1_2E^1_1, r, s)$ respectively. Routine calculations with inner products (see the
proof of \cite[Proposition~3.2]{RaeS}) show that $X_{ij} \cong X_i \otimes_A X_j$ via a map
satisfying $\delta_{ef} \mapsto \delta_e \otimes \delta_f$, so it suffices to show that there is
an isomorphism $X_{21} \to X_{12}$ satisfying $\delta_{fe} \mapsto U(r(f), s(e))\delta_{fe}$ for
all $fe \in E^1_2 E^1_1$. For this, fix $fe, hg \in E^1_2 E^1_1$. For $w \in E^0$, we calculate:
\begin{align*}
\langle U(r(f), s(e)) &\delta_{fe}, U(r(h), s(g)) \delta_{hg}\rangle_A(w)\\
    &= \sum_{\beta \in E^1_1E^2_1w} \overline{(U(r(f), s(e))\delta_{fe})(\beta)} (U(r(h), s(g))\delta_{hg})(\beta).
\intertext{This is equal to zero unless $r(h) = r(f)$ and $s(g) = s(e)$, in which case, $U(r(h), s(g)) =
U(r(f), s(e))$, and since this is a unitary operator, we may continue the calculation}
    &= \delta_{r(f), r(h)}\delta_{s(e), s(g)} \langle U(r(f), s(e))\delta_{fe}, U(r(f), s(e)) \delta_{hg}\rangle_A(w)\\
    &= \langle \delta_{fe}, \delta_{hg}\rangle_A(w).
\end{align*}
It follows by linearity and continuity that there is an inner-product-preserving linear operator
\begin{equation}\label{eq:U's assemble}
U : X_{21} \to X_{12}
\end{equation}
such that $U|_{\delta_v \cdot X_{21} \cdot \delta_w} = U(v,w)$ for all $v,w$. Since each
$U|_{\delta_v \cdot X_{21} \cdot \delta_w}$ is surjective onto $\delta_v \cdot X_{12} \cdot
\delta_w$, and since these subspaces span $X_{12}$, we see that $U$ is surjective and hence an
isomorphism.
\end{proof}

\begin{defn}
\label{def:unitary cocycle}
Let $E = (E_1, \dots, E_k)$ be a $k$-skeleton and $A = C_0(E_0)$. By a \emph{unitary cocycle for $E$}, we mean a
collection $\{U_{i,j}(v,w) : v,w \in E^0, 1 \le i < j \le k\}$ of unitary operators $U_{i,j}(v,w)
\in \mathcal{U}(\C^{v E^1_j E^1_i w}, \C^{v E^1_i E^1_j w})$ such that the isomorphisms $U_{i,j} :
X(E_j) \otimes_A X(E_i) \to X(E_i) \otimes_A X(E_j)$ for $i < j$ given by Lemma~\ref{lem:U's
induce iso} satisfy the cocycle identity
\begin{equation}\label{eq:cocycle id}
\begin{split}
(U_{i,j} \otimes 1_{X_l})&(1_{X_j} \otimes U_{i,l})(U_{j,l} \otimes 1_{X_i})\\
    &= (1_{X_i} \otimes U_{j,l})(U_{i,l} \otimes 1_{X_j})(1_{X_l} \otimes U_{i,j})\quad\text{ for all $1 \le i < j < l \le k$.}
\end{split}
\end{equation}
\end{defn}

\begin{rmk}\label{rmk:k=2}
If $k = 2$ then the condition~\eqref{eq:cocycle id} is vacuous because the inequality $1 \le i < j
< l \le 2$ has no solutions. So a unitary cocycle for a $2$-skeleton $(E_1, E_2)$ is nothing more
than a collection $\{U_{1,2}(v,w) : v,w \in E^0\}$ of unitary operators $U_{1,2}(v,w) \in
\mathcal{U}(\C^{v E^1_2 E^1_1 w}, \C^{v E^1_1 E^1_2 w})$. In particular, since each $|v E^1_2
E^1_1 w| = |v E^1_1 E^1_2 w|$, every $2$-skeleton admits many unitary cocycles.

By contrast, \cite[Example~5.15(ii)]{KPS2} presents an example, due to Jack Spielberg, of a
$3$-skeleton that cannot be the skeleton of a $3$-graph, and it is straightforward to extend the arguments used in that example to see that  this $3$-skeleton does not admit any unitary cocycles. In
particular, for $k \ge 3$ the existence of a unitary cocycle for $(E_1, \dots, E_k)$ is a
nontrivial additional hypothesis in our results henceforth. However, Proposition~\ref{prp:k-graph
to cocycle} below shows that there are plenty of $k$-skeletons that do admit unitary
cocycles.
\end{rmk}

We are interested in unitary cocycles because they correspond exactly with product systems over
$\N^k$ with generating fibres $X_{e_i} = X(E_i)$. To be precise, the results of  \cite{fs} (in particular \cite[Remark 2.3]{fs}) imply that every unitary cocycle determines a product system.  This is the content of the next lemma.

\begin{lemma}\label{lem:cocycle->prodsys}
Let $E = (E_1, \dots, E_k)$ be a $k$-skeleton, and let $A = C_0(E^0)$.
\begin{enumerate}
\item Let $U = \{U_{i,j}(v,w) : v,w \in E^0, 1 \le i < j \le k\}$ be a unitary cocycle for $E$.
    There is a unique product system $X_U$ over $\N^k$ with coefficient algebra $A$ such that:
    $X_{e_i} = X(E_i)$ for $i \le k$; for all $1 \le i < j \le k$, we have $X_{e_i + e_j} =
    X_{e_i} \otimes_A X_{e_j}$; and the multiplication in $X$ satisfies, for $1 \le i < j \le
    k$, $\xi \in X_{e_i}$ and $\eta \in X_{e_j}$,
    \begin{equation}\label{eq:the multiplication}
        \xi \eta = \xi \otimes \eta\quad\text{ and }\quad \eta \xi = U_{i,j}(\eta \otimes \xi).
    \end{equation}
\item Suppose that $X$ is a product system over $\N^k$ with coefficient algebra $A$ such that
    $X_{e_i} \cong X(E_i)$ for all $i \le k$. Then there exists a unitary cocycle $U$ for $E$
    such that $X \cong X_U$.
\end{enumerate}
\end{lemma}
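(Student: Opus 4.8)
The plan is to build $X_U$ by hand as a graded family of Hilbert bimodules, using the unitary cocycle to specify how to commute a fibre $X_{e_j}$ past a fibre $X_{e_i}$ when $i<j$, and then to invoke the results of \cite{fs} to see that these commutation data are exactly what is needed to assemble an associative product system over $\N^k$. Concretely, for $n = \sum_i n_i e_i \in \N^k$, one wants to define $(X_U)_n$ as an iterated balanced tensor product $X(E_1)^{\otimes n_1} \otimes_A \cdots \otimes_A X(E_k)^{\otimes n_k}$ (taken in increasing order of the indices), and to define, for $i < j$, a ``flip'' isomorphism $X(E_j) \otimes_A X(E_i) \to X(E_i) \otimes_A X(E_j)$ by $U_{i,j}$; interpolating a single $X(E_j)$ through a string of earlier fibres by applying $U_{i,j}\otimes 1$ and $1 \otimes U_{i,j}$ repeatedly then gives, for each pair $m,n$ with support in the ``wrong order,'' a canonical unitary $(X_U)_m \otimes_A (X_U)_n \to (X_U)_{m+n}$; composing such flips gives the general multiplication map $M_{m,n}$. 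The first step is therefore to write down these candidate maps $M_{m,n}$ precisely.

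The second and main step is to verify associativity, $M_{r,m+n}\circ(1_{(X_U)_r}\otimes M_{m,n}) = M_{r+m,n}\circ(M_{r,m}\otimes 1_{(X_U)_n})$. This is where \cite{fs} does the work: the point is that when one reduces both sides to a product of elementary flips $1^{\otimes a}\otimes U_{i,j}\otimes 1^{\otimes b}$, associativity becomes a statement about these elementary flips satisfying (a) the trivial ``far-commutation'' relations $(U_{i,j}\otimes 1)(1\otimes U_{i',j'}) = (1 \otimes U_{i',j'})(U_{i,j}\otimes 1)$ when the supports are disjoint, and (b) precisely the hexagonal/Yang--Baxter identity \eqref{eq:cocycle id} for each triple $i<j<l$. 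These are exactly the coherence conditions that make the corresponding presentation of $\N^k$ by generators and relations into a genuine product system; \cite{fs} establishes that a compatible family of such commuting isomorphisms on the coordinate fibres extends uniquely to a product system over $\N^k$, so I would cite the relevant theorem of \cite{fs} rather than redo the word-combinatorics. Conditions (i) and (iii) in the definition of a product system ($X_e = {}_AA_A$ and compatibility of multiplication by $A$ with the bimodule actions) are immediate from the construction, and \eqref{eq:the multiplication} holds by definition of the flips. Uniqueness follows because any product system with the stated fibres and multiplication must have its $M_{m,n}$ built from the coordinate multiplications and the flips $U_{i,j}$ in the prescribed way, so it agrees with $X_U$ fibrewise and map-wise.

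For part (2), suppose $X$ is a product system over $\N^k$ with $X_{e_i}\cong X(E_i)$. Fixing isomorphisms $X_{e_i}\cong X(E_i)$, the multiplication maps of $X$ restrict, for each $i<j$, to an isomorphism $X_{e_i}\otimes_A X_{e_j}\to X_{e_i+e_j}$ and also $X_{e_j}\otimes_A X_{e_i}\to X_{e_j + e_i} = X_{e_i+e_j}$; composing the second with the inverse of the first yields a Hilbert-bimodule isomorphism $X(E_j)\otimes_A X(E_i)\to X(E_i)\otimes_A X(E_j)$. By Lemma~\ref{lem:U's induce iso} applied to the $2$-skeleton $(E_i,E_j)$ (or rather its converse direction: any such isomorphism is diagonalised by the vertex-projections since $\delta_v\cdot(X_{e_j}\otimes X_{e_i})\cdot\delta_w$ has an orthonormal basis indexed by $vE^1_jE^1_iw$ and these subspaces are preserved), this isomorphism is of the form $U_{i,j}$ for a uniquely determined family $U_{i,j}(v,w)\in\mathcal U(\C^{vE^1_jE^1_iw},\C^{vE^1_iE^1_jw})$. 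Associativity of the multiplication in $X$, restricted to triple products from $X_{e_i}, X_{e_j}, X_{e_l}$ with $i<j<l$, is exactly the identity \eqref{eq:cocycle id}, so $U = \{U_{i,j}(v,w)\}$ is a unitary cocycle; and then the uniqueness clause of part (1) gives $X\cong X_U$. The main obstacle throughout is the bookkeeping in step two --- keeping track of which elementary flip is applied where when an arbitrary $(X_U)_m$ is merged into $(X_U)_n$ --- but since \cite{fs} is designed precisely to package this bookkeeping, I expect the proof to be a citation of \cite{fs} together with the verification that \eqref{eq:cocycle id} is the hypothesis \cite{fs} requires.
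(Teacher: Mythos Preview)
Your proposal is correct and follows essentially the same approach as the paper: part~(1) is handled by a direct appeal to \cite[Theorem~2.1]{fs} (your discussion of iterated tensor products, elementary flips, and the hexagon identity is exactly the content of that theorem), and part~(2) proceeds, as in the paper, by composing the two multiplication maps into $X_{e_i+e_j}$ to obtain the flip isomorphism, decomposing it along the vertex projections to extract the unitaries $U_{i,j}(v,w)$, and then invoking the uniqueness theorem \cite[Theorem~2.2]{fs}. Your version is in fact slightly more explicit than the paper's in two respects: you spell out that associativity of the given product system on triple products from $X_{e_i}, X_{e_j}, X_{e_l}$ forces the cocycle identity~\eqref{eq:cocycle id} (the paper leaves this implicit in the citation of \cite{fs}), and you treat general $i<j$ rather than writing everything with indices $1,2$.
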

\begin{proof}
(1) This follows immediately from \cite[Theorem~2.1]{fs}.

(2) Let $W : X_{e_2} \otimes_A X_{e_1} \to X_{e_1} \otimes_A X_{e_2}$ be the isomorphism
determined by the composition of the multiplication map $X_{e_2} \otimes_A X_{e_1} \to X_{(1,1)}$
with the inverse of the multiplication map $X_{e_1} \otimes_A X_{e_2} \to X_{(1,1)}$. Write
$E_{12}$ for the graph $(E^0, E^1_1E^1_2, r, s)$ and $E_{21}$ for the graph $(E^0, E^1_2E^1_1, r,
s)$. By identifying each $X_{e_i}$ with $X(E_i)$, we can regard $W$ as an isomorphism $X(E_2)
\otimes_A X(E_1) \to X(E_1) \otimes_A X(E_2)$. A straightforward calculation shows that there are
isomorphisms $M_{i,j} : X(E_i) \otimes_A X(E_j) \to X(E_{ij})$ such that $M_{i,j}(\delta_e \otimes
\delta_f) = \delta_{ef}$ whenever $s(e) = r(f)$. So we obtain an isomorphism $\widetilde{W} :
X(E_{21}) \to X(E_{12})$ given by $\widetilde{W} = M_{1,2} \circ W \circ M_{2,1}^{-1}$. Since this
is an isomorphism of Hilbert modules, it restricts to isomorphisms $\widetilde{W}(v,w) : \delta_v
\cdot X(E_{21}) \cdot \delta_w \to \delta_v \cdot X(E_{12}) \cdot \delta_w$. These spaces are
canonically isomorphic to $\C^{vE^1_2E^1_1 w}$ and $\C^{vE^1_1E^1_2 w}$, and so
$\widetilde{W}(v,w)$ determines a unitary $U_{1,2}(v,w) \in \mathcal{U}(\C^{vE^1_2E^1_2 w},
\C^{vE^1_1E^1_2 w})$.

The isomorphism $U_{1,2} : X(E_2) \otimes_A X(E_1) \to X(E_1) \otimes_A X(E_2)$ obtained from the maps
$\{ U_{1,2}(v,w)\}_{v,w}$ as in Lemma~\ref{lem:U's induce iso} satisfies $M_{1,2}(U_{1,2}(\delta_f \otimes
\delta_e)) = \widetilde{W}(M_{2,1}(\delta_f \otimes \delta_e))$ whenever $s(f) = r(e)$, and
therefore $U_{1,2} = W$. So the uniqueness result \cite[Theorem~2.2]{fs} shows that the product
system $X$ is isomorphic to $X_U$ as claimed.
\end{proof}

Our next result shows that for any row-finite $k$-graph $\Lambda$ with no sources, and for any
2-cocycle on $\Lambda$, we obtain a unitary cocycle such that the twisted $k$-graph $C^*$-algebra
coincides with the Cuntz--Nica--Pimsner algebra of the product system over $\N^k$ determined by
the unitary cocycle.

\begin{prop}\label{prp:k-graph to cocycle}
Let $\Lambda$ be a row-finite $k$-graph with no sources and suppose that $\phi :
\bigsqcup_{i<j\le k} \Lambda^{e_i+e_j} \to \T$ is a cubical 2-cocycle. For $i \le k$, let $E_i$
be the directed graph $E_i = (\Lambda^0, \Lambda^{e_i}, r, s)$. Then $E = (E_1, \dots, E_k)$ is
a $k$-skeleton. There is a unitary cocycle $U = U^{\Lambda,\phi}$ for $E$ such that for all $v,w
\in \Lambda^0$ and $1 \le i < j \le k$, and for all $e,e' \in E^1_i$ and $f,f' \in E^1_j$
satisfying $ef = f'e'$ in $\Lambda$, we have
\[
U_{i,j}(v,w)(\delta_{f'e'}) = \phi(ef)\delta_{ef}.
\]
Let $X_U$ be the product system associated to $U$ in Lemma~\ref{lem:cocycle->prodsys}. Then
there is an isomorphism $\pi : C^*_\phi(\Lambda) \cong \mathcal{NO}_{X_U}$ such that $\pi(p_v) =
i_0(\delta_v)$ for all $v \in \Lambda^0$, and such that $\pi(s_\lambda) i_{e_i}(\delta_\lambda)$
for all $i \le k$ and $\lambda \in \Lambda^{e_i}$.
\end{prop}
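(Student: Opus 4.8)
The plan is to break the proof into three parts: first verify that $E$ is a $k$-skeleton; second construct the unitary cocycle $U^{\Lambda,\phi}$ and check the cocycle identity~\eqref{eq:cocycle id}; and third establish the isomorphism $\pi : C^*_\phi(\Lambda) \cong \mathcal{NO}_{X_U}$ by exhibiting mutually inverse homomorphisms via universal properties. For the first part, the factorisation property of $\Lambda$ gives, for each $v,w \in \Lambda^0$ and $i \ne j$, a bijection $v\Lambda^{e_i}\Lambda^{e_j}w \to v\Lambda^{e_j}\Lambda^{e_i}w$ sending $ef$ to the unique $f'e'$ with $ef = f'e'$; identifying $\Lambda^{e_i}\Lambda^{e_j}$ with $E^1_iE^1_j$, this says $|vE^1_iE^1_jw| = |vE^1_jE^1_iw|$, so $E$ is a $k$-skeleton, and row-finiteness with no sources is inherited from $\Lambda$.

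For the second part, for each $1 \le i < j \le k$ and $v,w \in \Lambda^0$, define $U_{i,j}(v,w) \in \mathcal{U}(\C^{vE^1_jE^1_iw}, \C^{vE^1_iE^1_jw})$ by $U_{i,j}(v,w)\delta_{f'e'} = \phi(ef)\delta_{ef}$, where $ef = f'e'$ is the factorisation; since $|\phi(ef)| = 1$ and $ef \mapsto f'e'$ is a bijection, this is unitary. To verify the cocycle identity~\eqref{eq:cocycle id}, apply both composites to a basis element $\delta_{hgf}$ of $X(E_l)\otimes_A X(E_j)\otimes_A X(E_i)$ (with $h \in E^1_l$, $g \in E^1_j$, $f \in E^1_i$, appropriately composable): both sides re-sort $hgf$ into an $i$-$j$-$l$ ordered path in $\Lambda$, picking up a product of three $\phi$-values along the way, and the two orderings of the swaps correspond to the front/top/left versus back/bottom/right faces of the $3$-cube spanned by $hgf$. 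The equality of the resulting scalars is exactly the cubical $2$-cocycle identity for $\phi$ evaluated on the degree-$(e_i+e_j+e_l)$ path $hgf$ (after relabelling colours); this bookkeeping is the only genuinely delicate computation, and I expect it to be the main obstacle. By Lemma~\ref{lem:cocycle->prodsys}(1) we then obtain the product system $X_U$.

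For the third part, I would build $\pi$ and its inverse from universal properties. A Cuntz--Krieger $\phi$-representation of $\Lambda$ lives inside $\mathcal{NO}_{X_U}$: set $P_v := i_0(\delta_v)$ and, for $\mu \in \Lambda^{e_i}$, $S_\mu := i_{e_i}(\delta_\mu)$. Relations (i) and (iii) of a Toeplitz/Nica representation give $S_\mu^*S_\mu = i_0(\langle \delta_\mu,\delta_\mu\rangle) = P_{s(\mu)}$ and, using that in $\mathcal{NO}_{X_U}$ the left action is faithful and by compacts so Cuntz--Nica--Pimsner covariance reduces to $i^{(e_i)}\circ\phi_{e_i} = i_0$ (by \cite[Corollary~5.2]{sims--yeend}), the Cuntz--Krieger relation $P_v = \sum_{\mu \in v\Lambda^{e_i}} S_\mu S_\mu^*$; the twisted commutation relation $S_{\mu'}S_{\nu'} = \phi(\mu\nu)S_\mu S_\nu$ for $\mu\nu = \nu'\mu'$ follows from the multiplication rule~\eqref{eq:the multiplication} in $X_U$ together with the defining formula for $U_{i,j}$. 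Universality of $C^*_\phi(\Lambda)$ then yields $\pi : C^*_\phi(\Lambda) \to \mathcal{NO}_{X_U}$ with the stated values. Conversely, the $\phi$-representation $\{p_v, s_\mu\}$ in $C^*_\phi(\Lambda)$ determines a representation of each generating fibre $X(E_i)$ (extending linearly, $\delta_\mu \mapsto s_\mu$), and one checks these assemble into a Nica covariant representation of $X_U$ — Nica covariance and the factorisation/commutation relations match because the skeleton relations and the twisted commutation relations in $C^*_\phi(\Lambda)$ encode exactly the multiplication $\eta\xi = U_{i,j}(\eta\otimes\xi)$ — which is Cuntz--Nica--Pimsner covariant by the Cuntz--Krieger relation (iii) again via \cite[Corollary~5.2]{sims--yeend}, hence induces $\varpi : \mathcal{NO}_{X_U} \to C^*_\phi(\Lambda)$. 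Checking $\pi$ and $\varpi$ are mutually inverse is routine on generators, completing the proof.
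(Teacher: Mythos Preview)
Your treatment of the first two parts---showing $E$ is a $k$-skeleton, defining $U_{i,j}(v,w)$, and verifying the cocycle identity~\eqref{eq:cocycle id} by tracking both three-fold composites on a basis tensor and matching the resulting scalar products against the cubical cocycle condition---is essentially the paper's argument, and your identification of this bookkeeping as the delicate step is accurate.

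Your approach to the isomorphism diverges from the paper. The paper builds only $\pi$: it checks that $P_v := i_0(\delta_v)$ and $S_\mu := i_{e_i}(\delta_\mu)$ form a Cuntz--Krieger $\phi$-representation (exactly as you do), obtains $\pi$ by universality, argues surjectivity directly by exhibiting each $i_n(X_n)$ as a span of products of $S_\lambda$'s, and then proves injectivity by passing to a categorical cocycle $c$ via \cite[Theorem~4.15 and Proposition~5.7]{KPS4} and applying the gauge-invariant uniqueness theorem \cite[Corollary~7.7]{KPS4}. You instead propose constructing an explicit inverse $\varpi : \mathcal{NO}_{X_U} \to C^*_\phi(\Lambda)$ from a Cuntz--Nica--Pimsner covariant representation of $X_U$. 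This is a legitimate alternative and has the advantage of being self-contained (no appeal to the cubical/categorical dictionary or to a gauge-invariant uniqueness theorem). The cost is that the phrase ``one checks these assemble into a Nica covariant representation of $X_U$'' hides nontrivial work: you must extend the maps $\delta_\mu \mapsto s_\mu$ on the generating fibres $X_{e_i}$ to maps on \emph{every} $X_n$ (appealing, say, to the universal property implicit in \cite[Theorem~2.1]{fs}), verify the Toeplitz relations hold for arbitrary $p,q \in \N^k$, verify Nica covariance for arbitrary pairs $p,q$, and check that $\psi^{(p)}\circ\phi_p = \psi_e$ for all $p$ rather than just the generators. None of this is hard in the present graph-module setting, but it is more than you indicate; the paper's route trades these verifications for a single appeal to an off-the-shelf uniqueness theorem.
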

\begin{proof}
The factorisation rules in $\Lambda$ determine bijections $f_{ij} : v\Lambda^{e_i} \Lambda^{e_j} w
\to v\Lambda^{e_i+e_j} w$, and hence bijections $F_{ij} := f_{ij}^{-1} \circ f_{ji} :
v\Lambda^{e_j}\Lambda^{e_i}w \to v\Lambda^{e_i}\Lambda^{e_j}w$ for $i < j$, such that if $e, e' \in \Lambda^{e_j} = E^1_i$ and $f, f' \in \Lambda^{e_i} = E^1_j$ satisfy $ef = f'e'$, then $F_{ij}(ef)  = f'e'.$ In particular, $E$
is a $k$-skeleton. The maps $F_{ij}$ induce a unitary (in fact a permutation matrix) $V_{ij}(v,w) : \C^{v E^1_jE^1_i w} \to
\C^{vE^1_iE^1_j w}$ for each $v, w \in E^0$, and post-composing $V_{i,j}(v,w)$ with the diagonal unitary
$\operatorname{diag}(\phi(\lambda))_{\lambda \in vE^1_iE^1_j w}$ yields a unitary $U(v,w)_{i,j}$
satisfying the desired formula. If $e f g  \in E^1_i E^1_j E^1_l$ with $1 \le l < j < i \le k$,
and if the factorisation rules give $e f g= f^1e^1g = f^1g^1e^2 = g^2f^2e^2$ and also $e f g=
eg_1f_1 = g_2e_1f_1 = g_2f_2e_2$, then associativity of composition in $\Lambda$ gives $e^2 = e_2$, $f^2 = f_2$ and $g^2 = g_2$:
\[
\begin{tikzpicture}[scale=2]
 \node (00) at (0,0) {};
    \node (01) at (0,1) {};
    \node (10) at (1,0) {};
    \node (11) at (1,1) {};
    \node (53) at (0.5,0.3) {};
    \node (513) at (0.5, 1.3) {};
    \node (1513) at (1.5, 1.3) {};
    \node (153) at (1.5, 0.3) {};
    \draw[-stealth] (01)-- node[left] {$f^1$} (00);
    \draw[-stealth, dotted] (10)-- node[below] {$e$} (00);
    \draw[-stealth] (11)-- node[left, near start] {$f$} (10);
    \draw[-stealth, dotted] (11)-- node[above, near start ] {$e^1$} (01);
    \draw[-stealth, dashed] (1513) -- node[left] {$g$} (11);
    \draw[-stealth, dashed] (513) -- node[left] {$g^1$} (01);
    \draw[-stealth, dashed] (153) -- node[right] {$g_1$} (10);
    \draw[-stealth, dashed] (53) -- node[right] {$g^2$} (00);
    \draw[-stealth,dotted] (1513) -- node[above] {$e^2$} (513);
    \draw[-stealth] (513) -- node[left, near end] {$f^2$} (53);
    \draw[-stealth,dotted] (153) -- node[above, near start] {$e_1$} (53);
    \draw[-stealth] (1513) -- node[right] {$f_1$} (153);
\end{tikzpicture}
\]

Quick calculations  then give
\begin{align*}
(U_{i,j} \otimes 1_{X_l})(1_{X_j} \otimes U_{i,l})(U_{j,l} \otimes 1_{X_i})(\delta_e \otimes \delta_f \otimes \delta_g)
    &= \phi (ef)\phi(e^1g)\phi(f^1g^1) \delta_{g^2} \otimes \delta_{f^2} \otimes \delta_{e^2},\\
\intertext{and}
(1_{X_i} \otimes U_{j,l})(U_{i,l} \otimes 1_{X_j})(1_{X_l} \otimes U_{i,j})(\delta_e \otimes \delta_f \otimes \delta_g)
    &= \phi (fg)\phi(eg_1)\phi (e_1f_1) \delta_{g_2} \otimes \delta_{f_2} \otimes \delta_{e_2} \\
    &=  \phi (fg)\phi(eg_1)\phi (e_1f_1) \delta_{g^2} \otimes \delta_{f^2} \otimes \delta_{e^2}.
\end{align*}
The cubical cocycle condition shows that $c(ef)c(e^1g)c(f^1g^1) = c(fg)c(eg_1)c(e_1f_1)$. Thus
condition~\eqref{eq:cocycle id} holds when both sides are applied to an elementary tensor
$\delta_e \otimes \delta_f \otimes \delta_g$ corresponding to a path $e f g \in E^1_i E^1_j
E^1_l$. Since these are the only elementary tensors of basis vectors that are nonzero, they span
$X_i \otimes_A X_j \otimes_A X_k$, and therefore $U$ is a unitary cocycle for $E$ as claimed.

For $v \in \Lambda^0$, define $P_v := i_0(\delta_v)$ and for $\lambda \in \Lambda^{e_i}$ define
$S_\lambda := i_{e_i}(\delta_\lambda)$. We claim that these form a Cuntz--Krieger
$\phi$-representation of $\Lambda$ as in \cite[Definition~7.4]{kps3}. The $P_v$ are mutually
orthogonal projections because the $\delta_v$ are. For $\lambda \in \Lambda^{e_i}$, we have
\[
S_\lambda^* S_\lambda
    = i_{e_i}(\delta_\lambda)^*i_{e_i}(\delta_\lambda)
    = i_0(\langle \delta_\lambda, \delta_\lambda\rangle_A)
    = i_0(\delta_s(\lambda))
    = P_{s(\lambda)}
\]
which is condition~(1) of \cite[Definition~7.4]{kps3}.

To check condition~(2), fix $1 \le i < j \le k$ and fix $\mu,\mu' \in \Lambda^{e_i}$ and $\nu,\nu'
\in \Lambda^{e_j}$ such that $\mu\nu = \nu'\mu'$. Using~\eqref{eq:the multiplication} at the
second and fifth equalities, and using the definition of $U_{r(\nu'), s(\mu')}$ at the fourth, we
calculate:
\begin{align*}
s_\nu' s_\mu'
    &= i_{e_2}(\delta_{\nu'})i_{e_1}(\delta_{\mu'})
    = i_{(1,1)}(U(\delta_{\nu'}\otimes \delta_{\mu'}))
    = i_{(1,1)}(U_{r(\nu'), s(\mu')}(\delta_{\nu'}\otimes \delta_{\mu'}))\\
    &= i_{(1,1)}(\phi(\mu\nu)\delta_{\mu}\otimes \delta_{\nu})
    = \phi(\mu\nu)i_{e_1}(\delta_{\mu})i_{e_2}(\delta_{\nu})
    = \phi(\mu\nu) s_\mu s_\nu.
\end{align*}

Finally, to check condition~(3), fix $v \in E^0$. Writing $\alpha : A \to \K(X_i)$ for the
homomorphism implementing the left action, we have $\sum_{e \in v\Lambda^{e_i}} \theta_{\delta_e,
\delta_e} = \alpha(\delta_v)$, and so the Cuntz--Pimsner covariance condition gives $\sum_{e \in
v\Lambda^{e_i}} S_e S^*_e = i^{(e_i)}\big(\sum_e \theta_{\delta_e, \delta_e}\big) = i_0(\delta_v)
= P_v$. Thus the operators $\{P_v\}_{v \in \Lambda^0} $ and $\{S_\lambda\}_{\lambda \in \Lambda^{e_i}}$ form a Cuntz--Krieger $\phi$-representation of $\Lambda$ as
claimed.

The universal property of $C^*_\phi(\Lambda)$ now gives a homomorphism $\pi : C^*_\phi(\Lambda)
\to \mathcal{NO}_{X}$ that carries $p_v$ to $P_v$ and $s_\lambda$ to $S_\lambda$ for $\lambda \in
\bigsqcup_{i \le k} \Lambda^{e_i}$. Since $A$ is spanned by the $\delta_v$ and each $X_i$ is
spanned by $\{\delta_{\lambda} : \lambda \in \Lambda^{e_i}\}$ we see that the range of $\pi$
contains $C^*(i_0(A) \cup \bigcup_i i_{e_i}(X_i))$. Since multiplication gives isomorphisms $X_n
\cong X_1^{\otimes n_1} \otimes_A X_2^{\otimes n_2} \otimes_A \cdots \otimes_A X_k^{\otimes n_k}$,
we see that
\[
i_n(X_n) = \clsp\{S_{\lambda^1_1} \dots S_{\lambda^1_{n_1}} S_{\lambda^2_1} \dots S_{\lambda^2_{n_2}} \dots S_{\lambda^k_1} \dots S_{\lambda^k_{n_k}} :
    \lambda^i_j \in \Lambda^{e_i}\}
\]
and so is contained in the image of $\pi$. Since $i_0$ is injective \cite[Lemma~3.5 and
Theorem~4.1]{sims--yeend}, the $P_v$ are all nonzero. Using \cite[Theorem~4.15 and
Proposition~5.7]{KPS4} we see that there is an isomorphism $\omega : C^*_\phi(\Lambda) \cong
C^*(\Lambda, c)$, for some categorical 2-cocycle $c$, such that $\omega$ carries $p_v$ to $p_v$ for each $v \in
\Lambda^0$, and then an application of the gauge-invariant uniqueness theorem
\cite[Corollary~7.7]{KPS4} shows that $\pi \circ \omega^{-1}$ is injective, and therefore $\pi$
is injective. So $C^*_\phi(\Lambda) \cong \mathcal{NO}_{Y^0}$ as claimed.
\end{proof}

Lemma~\ref{lem:cocycle->prodsys} also allows us to construct homotopies of product systems using
continuous paths of unitary cocycles.

\begin{defn}
\label{def:path of unitaries}
Let $E = (E_1, \dots, E_k)$ be a $k$-skeleton. A \emph{continuous path of unitary cocycles for
$E$} is a family
\[
    \{U^t_{i,j}(v,w) : v,w \in E^0, 1 \le i < j \le k, t \in [0,1]\}
\]
such that for each $t$, the family $\{U^t_{i,j}(v,w) : v,w \in E^0, 1 \le i < j \le k\}$ is a
unitary cocycle for $E$, and such that for each $i < j$ and each $v,w \in E^0$, the function $t
\mapsto U^t_{i,j}(v,w)$ is a continuous function from $[0,1]$ to $\mathcal{U}(\C^{vE^1_jE^1_i
w}, \C^{vE^1_iE^1_j w})$.
\end{defn}

\begin{prop}\label{prp:path of cocycles}
Let $E = (E_1, \dots, E_k)$ be a $k$-skeleton, and suppose that the family $U = \{U^t_{i,j}(v,w) :
v,w \in E^0, 1 \le i < j \le k, t \in [0,1]\}$ is a continuous path of unitary cocycles for $E$.
Let $A = C_0(E^0)$, and for each $i$, let $X_i = X(E_i)$ the graph bimodule for $E_i$. Define $B
:= C([0,1], A)$ and for each $i$, define $Y_i := C([0,1], X_i)$ regarded as a Hilbert $B$-bimodule
under the actions $(f \cdot F \cdot g)(t) = f(t)\cdot F(t)\cdot g(t)$ and $\langle F,
G\rangle_B(t) = \langle F(t), G(t)\rangle_A$. Then there is a unique product system $Y$ over
$\N^k$ with coefficient algebra $B$ such that: $Y_{e_i} = Y_i$ for $i \le k$; for all $1 \le i < j
\le k$, we have $Y_{e_i + e_j} = Y_i \otimes_B Y_j$; and multiplication in $Y$ satisfies
\begin{equation}\label{eq:the multiplication with t}
(F G)(t) = F(t)  \otimes_A G(t) \quad\text{ and }\quad (G F)(t) = U^t_{i,j}(G(t) \otimes F(t))
\end{equation}
whenever $1 \le i < j \le k$, $F \in Y_i$, $G \in Y_j$ and $t \in [0,1]$. The product system $Y$
is a homotopy of product systems from $X_{U^0}$ to $X_{U^1}$.
\end{prop}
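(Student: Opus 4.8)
The plan is to convert the continuous path $U$ into a single family of commutation isomorphisms over $B = C([0,1],A)$, feed these into \cite[Theorem~2.1]{fs} to obtain $Y$, and then recover the fibres $Y^t$ using Lemma~\ref{lem:bundle of product systems} together with the uniqueness statement \cite[Theorem~2.2]{fs}. The hard part will be Step~1 below: promoting the pointwise unitaries $U^t_{i,j}$ to a genuine adjointable operator on the module $Y_j \otimes_B Y_i$ over $C([0,1],A)$, i.e.\ checking that continuity of $t \mapsto U^t_{i,j}(v,w)$, combined with the block-diagonal structure of the operators, forces the assembled map to preserve continuous sections; everything after that is formal.

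First I would build the commutation isomorphisms over $B$. For $i<j$, let $U^t_{i,j}\colon X_j \otimes_A X_i \to X_i \otimes_A X_j$ be the isomorphism assembled from $\{U^t_{i,j}(v,w)\}_{v,w}$ via Lemma~\ref{lem:U's induce iso}, and use the identification $Y_i \otimes_B Y_j \cong C([0,1], X_i \otimes_A X_j)$ (coming, as in Example~\ref{eg:trivial homotopy}, from the fact that external tensor products factor through internal ones). I claim that $(U_{i,j}F)(t) := U^t_{i,j}(F(t))$ defines a Hilbert $B$-bimodule isomorphism $U_{i,j}\colon Y_j \otimes_B Y_i \to Y_i \otimes_B Y_j$. $B$-bilinearity and inner-product-preservation are immediate pointwise from the corresponding properties of each $U^t_{i,j}$, and the analogous formula with $(U^t_{i,j})^{-1}$ produces a two-sided inverse, so the only issue is that $U_{i,j}F$ is a continuous section. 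Here I would use that, under the identification of $X_j \otimes_A X_i$ with the graph module of $(E^0, E^1_jE^1_i, r, s)$ from the proof of Lemma~\ref{lem:U's induce iso}, this module is the Hilbert-module direct sum of the finite-dimensional blocks $\C^{vE^1_jE^1_iw}$; since $F([0,1])$ is compact, for each $\varepsilon>0$ there is a finite $V_0 \subseteq E^0$ with $\|F(t) - P_{V_0}F(t)\| < \varepsilon$ for all $t$, where $P_{V_0}$ is the projection onto $\bigoplus_{v,w\in V_0}\C^{vE^1_jE^1_iw}$. As $U^t_{i,j}$ carries each $(v,w)$-block onto the corresponding block of $X_i\otimes_A X_j$ and is isometric, the section $t \mapsto U^t_{i,j}(P_{V_0}F(t))$ is continuous (a finite sum of blocks, on each of which $U^t_{i,j}$ acts as the continuous matrix-valued function $U^t_{i,j}(v,w)$) and $\|U^t_{i,j}(F(t)) - U^t_{i,j}(P_{V_0}F(t))\| < \varepsilon$ for all $t$. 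Hence $U_{i,j}F$ is a uniform limit of continuous sections, so it is continuous.

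Next I would assemble the product system. Evaluating at $t$ shows that $\{U_{i,j}\}_{i<j}$ satisfies \eqref{eq:cocycle id} over $B$, because each side evaluated at $t$ is the corresponding side of \eqref{eq:cocycle id} for the unitary cocycle $U^t$ for $E$. So \cite[Theorem~2.1]{fs}, applied with coefficient algebra $B$ and coordinate modules $Y_i$, yields a unique product system $Y$ over $\N^k$ with $Y_{e_i}=Y_i$, $Y_{e_i+e_j}=Y_i\otimes_B Y_j$, and multiplication $\xi\eta=\xi\otimes\eta$ and $\eta\xi=U_{i,j}(\eta\otimes\xi)$ for $i<j$, $\xi\in Y_i$, $\eta\in Y_j$; unwinding the definition of $U_{i,j}$ turns this into \eqref{eq:the multiplication with t}. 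I would then check that $Y$ is nondegenerate: right-nondegeneracy is automatic, $\overline{\phi(B)Y_i}=Y_i$ follows from an approximate identity of $A$ acting pointwise together with compactness of $F([0,1])$ for $F\in Y_i$, and nondegeneracy of the remaining fibres follows since each $Y_n$ is an iterated $B$-balanced tensor product of the $Y_{e_i}$. Finally, $Y$ is fibred over $[0,1]$: on each $Y_i$ a central multiplier $f\in C([0,1])\subseteq\mathcal{ZM}(B)$ acts on either side as $t\mapsto f(t)F(t)$, because $1_{\M(A)}$ acts as the identity on the nondegenerate bimodule $X_i$; hence $f\cdot\xi=\xi\cdot f$, and this passes to every fibre of $Y$.

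Finally I would identify the fibres. By Lemma~\ref{lem:bundle of product systems}, each $Y^t$ is a product system over $\N^k$ with coefficient algebra $A$. Evaluation at $t$ gives Hilbert $A$-bimodule isomorphisms $Y^t_{e_i} = C([0,1],X_i)/\{F : F(t)=0\} \cong X_i$ and $Y^t_{e_i+e_j}\cong X_i\otimes_A X_j$, and under these \eqref{eq:the multiplication with t} shows that the multiplication maps $M^{Y^t}_{e_i,e_j}$ and $M^{Y^t}_{e_j,e_i}$ become $\xi\otimes\eta\mapsto\xi\otimes\eta$ and $\eta\otimes\xi\mapsto U^t_{i,j}(\eta\otimes\xi)$ respectively; thus the commutation isomorphism of $Y^t$ at $(i,j)$ is $U^t_{i,j}$, which is exactly the commutation isomorphism of $X_{U^t}$. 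The uniqueness theorem \cite[Theorem~2.2]{fs} then gives an isomorphism of product systems $Y^t\cong X_{U^t}$ for each $t$. Taking $t=0$ and $t=1$ yields $Y^0\cong X_{U^0}$ and $Y^1\cong X_{U^1}$, so $Y$ is a homotopy of product systems from $X_{U^0}$ to $X_{U^1}$, as claimed.
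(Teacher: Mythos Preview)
Your proposal is correct and follows essentially the same approach as the paper: assemble the pointwise $U^t_{i,j}$ into isomorphisms $U_{i,j}$ over $B$, invoke \cite[Theorem~2.1]{fs} to build $Y$, verify nondegeneracy and fibredness, and then use \cite[Theorem~2.2]{fs} to identify each fibre $Y^t$ with $X_{U^t}$. The only difference is that you supply a detailed continuity argument for $U_{i,j}F$ via compactness of $F([0,1])$ and finite block approximation, where the paper simply asserts that continuity of the path yields the isomorphism; your extra care is justified but not a different route.
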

\begin{proof}
Since each $U^t_{i,j}$ gives an isomorphism $X_j \otimes_A X_i \to X_i \otimes_A X_j$, and $U$ is
a continuous path of unitary cocycles, the map $G \otimes F \mapsto (t \mapsto U_{i,j}^t(G(t)
\otimes F(t)))$ determines an isomorphism $U_{i,j} : Y_j \otimes_B Y_i \to Y_i \otimes_B Y_j$.
Since the $U_{i,j}^t$ satisfy the cocycle identity, so do the induced isomorphisms $U_{i,j}$. So
the first statement follows from \cite[Theorem~2.1]{fs}.

The formulas for the actions show immediately that $Y$ is nondegenerate, and is fibred over
$[0,1]$. Thus, it suffices to show that $Y^t = X_{U^t}$.  Let $I_t$ be the ideal of $B$ generated
by $C_0([0,1] \setminus \{t\}) \subseteq C([0,1])$. For $F, G \in Y_i$, we have $F  + Y_i\cdot I_t
= G + Y_i\cdot I_t$ if and only if $F(t) = G(t)$. Routine calculations then show that the map
$\eta_t : F  + Y_i\cdot I_t \mapsto F(t)$ is a bimodule morphism (in fact an isomorphism) from
$Y_i^t$ to $X_i$. Since each $Y_i$ is fibred over $[0,1]$ we see that
 $(Y_i \otimes_B Y_j) \cdot I_t = (Y_i \cdot I_t) \otimes_B (Y_j \cdot I_t)$ for all $i, j$ and $t$, and that
$\eta_t$ induces an isomorphism (also denoted $\eta_t$) from $Y_i^t \otimes_A Y_j^t$
to $X_i \otimes_A X_j$. These isomorphisms satisfy
\begin{align*}
U^t_{i,j}(\eta_t((G + Y_j \cdot I_t) \otimes
(F + Y_i \cdot I_t)))
    &= U^t_{i,j}(G(t) \otimes
    F(t))
    = (U_{i,j}(G \otimes F))(t)\\
    &= \eta_t(U_{i,j}(G \otimes F) + (Y_i \otimes_B Y_j) \cdot I_t),
\end{align*}
and so it follows from the uniqueness theorem \cite[Theorem~2.2]{fs} that the product system
$Y^t$ is isomorphic to $X_{U^t}$. In particular $Y^0 \cong X_{U^0}$ and $Y^1 \cong X_{U^1}$.
\end{proof}

\begin{cor}\label{cor:connected->K-theory}
Let $E = (E_1, \dots, E_k)$ be a $k$-skeleton. If $U = \{U^t_{i,j}(v,w)\}$ is a continuous path of
unitary cocycles for $E$, then $K_*(\mathcal{NO}_{X_{U^0}}) \cong K_*(\mathcal{NO}_{X_{U^1}})$. In
particular, suppose that  $\Lambda_1$ and $\Lambda_2$ are $k$-graphs with the same skeleton and
that $\phi_l : \bigsqcup_{i < j \le k} \Lambda_l^{e_i + e_j} \to \T$ is a cubical cocycle for $i =
1,2$. If there is a continuous path of unitary cocycles from $U_{\phi_1}$ to $U_{\phi_2}$, then
$K_*(C^*_{\phi_1}(\Lambda_1)) \cong K_*(C^*_{\phi_2}(\Lambda_2))$.
\end{cor}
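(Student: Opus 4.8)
The plan is to chain together the three main results already established. Proposition~\ref{prp:path of cocycles} converts a continuous path of unitary cocycles for $E$ into a homotopy of product systems over $\N^k$; Theorem~\ref{thm:htopy-KK} shows that such a homotopy induces isomorphisms on the $K$-theory of the associated Cuntz--Nica--Pimsner algebras; and Proposition~\ref{prp:k-graph to cocycle} identifies, for a $k$-graph $\Lambda_l$ with cubical cocycle $\phi_l$, the twisted $C^*$-algebra $C^*_{\phi_l}(\Lambda_l)$ with $\mathcal{NO}_{X_{U_{\phi_l}}}$.

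First I would apply Proposition~\ref{prp:path of cocycles} to the given continuous path $U = \{U^t_{i,j}(v,w)\}$, obtaining a product system $Y$ over $\N^k$ with coefficient algebra $B := C([0,1], A)$, where $A = C_0(E^0)$, that is a homotopy of product systems from $X_{U^0}$ to $X_{U^1}$; in particular $Y^0 \cong X_{U^0}$ and $Y^1 \cong X_{U^1}$. Before invoking Theorem~\ref{thm:htopy-KK} I must verify its standing hypothesis, namely that $B = A \otimes C([0,1])$ acts faithfully and by compact operators on every fibre $Y_n$. Since each $E_i$ is a row-finite graph with no sources, the left action of $A$ on $X(E_i)$ sends $\delta_v$ to the finite sum $\sum_{e \in vE_i^1} \theta_{\delta_e,\delta_e}$, hence is by compacts, and it is faithful because $vE_i^1 \ne \emptyset$ for every $v$. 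Consequently the left action of $B$ on $Y_{e_i} = X(E_i) \otimes C([0,1])$ is faithful and by compacts, using that $\mathcal{K}(X(E_i) \otimes C([0,1])) = \mathcal{K}(X(E_i)) \otimes C([0,1])$. Each $Y_n$ is a balanced tensor product over $B$ of copies of the $Y_{e_i}$, so acting by compacts is inherited (as recalled from \cite[Lemma~A.2]{KPS7}), while faithfulness is inherited because the no-sources hypothesis forces the relevant elementary tensors of basis vectors to be nonzero.

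With the hypotheses in place, Theorem~\ref{thm:htopy-KK} applied at $t = 0$ and at $t = 1$ gives
\[
K_*(\mathcal{NO}_{X_{U^0}}) \cong K_*(\mathcal{NO}_{Y^0}) \cong K_*(\mathcal{NO}_Y) \cong K_*(\mathcal{NO}_{Y^1}) \cong K_*(\mathcal{NO}_{X_{U^1}}),
\]
which is the first assertion. For the second assertion, Proposition~\ref{prp:k-graph to cocycle} provides isomorphisms $C^*_{\phi_l}(\Lambda_l) \cong \mathcal{NO}_{X_{U_{\phi_l}}}$ for $l = 1, 2$; applying the first part to the continuous path of unitary cocycles from $U_{\phi_1}$ to $U_{\phi_2}$ yields $K_*(\mathcal{NO}_{X_{U_{\phi_1}}}) \cong K_*(\mathcal{NO}_{X_{U_{\phi_2}}})$, and combining the three isomorphisms gives $K_*(C^*_{\phi_1}(\Lambda_1)) \cong K_*(C^*_{\phi_2}(\Lambda_2))$. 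Note that we need neither the UCT nor countable generation here, since only a $K$-theory isomorphism (rather than a $KK$-equivalence) is claimed; in the $k$-graph application those hypotheses hold automatically anyway.

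The only step requiring real care is the verification in the second paragraph that the coefficient algebra of $Y$ acts faithfully and by compacts on every fibre, as everything else is a direct application of named results. I expect this bookkeeping to be the main (mild) obstacle: unlike the compactness condition, faithfulness is not automatically inherited by a balanced tensor product, so one genuinely uses that each $E_i$ has no sources.
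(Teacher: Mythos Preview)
Your proposal is correct and follows essentially the same route as the paper's proof, which simply cites Proposition~\ref{prp:path of cocycles}, then Theorem~\ref{thm:htopy-KK}, then Proposition~\ref{prp:k-graph to cocycle}. The only difference is that you explicitly verify the faithful-and-by-compacts hypothesis of Theorem~\ref{thm:htopy-KK} for the homotopy $Y$, which the paper leaves implicit; your verification is fine and is indeed the one point that deserves a sentence of justification.
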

\begin{proof}
Proposition~\ref{prp:path of cocycles} gives a homotopy of product systems from $X_{U^0}$ to
$X_{U^1}$, and then Theorem~\ref{thm:htopy-KK} implies that $K_*(\mathcal{NO}_{X_{U^0}}) \cong
K_*(\mathcal{NO}_{X_{U^1}})$ as required. The final statement follows from the first statement and
Proposition~\ref{prp:k-graph to cocycle}.
\end{proof}

The next result demonstrates that Corollary~\ref{cor:connected->K-theory} is particularly useful
when $k = 2$.

\begin{prop}\label{prp:unitary cocycles connected k=2}
Let $E = (E_1, E_2)$ be a $2$-skeleton. Then the space of unitary cocycles for $E$ is
path-connected in the sense that for any two unitary cocycles $V, W$ for $E$ there is a continuous
path $U^t$ of unitary cocycles for $E$ such that $U^0 = V$ and $U^1 = W$.
\end{prop}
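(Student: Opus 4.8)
The plan is to appeal to Remark~\ref{rmk:k=2}: for a $2$-skeleton there is no cocycle identity to satisfy, so a unitary cocycle for $E = (E_1, E_2)$ is nothing more than an arbitrary family $\{U_{1,2}(v,w) : v,w \in E^0\}$ of unitaries $U_{1,2}(v,w) \in \mathcal{U}(\C^{vE^1_2E^1_1 w}, \C^{vE^1_1E^1_2 w})$, since the constraint~\eqref{eq:cocycle id} is vacuous when $k = 2$. Thus to connect two unitary cocycles $V$ and $W$ it is enough to choose, for each pair $(v,w)$ separately, a norm-continuous path $t \mapsto U^t_{1,2}(v,w)$ of unitaries in $\mathcal{U}(\C^{vE^1_2E^1_1 w}, \C^{vE^1_1E^1_2 w})$ with $U^0_{1,2}(v,w) = V_{1,2}(v,w)$ and $U^1_{1,2}(v,w) = W_{1,2}(v,w)$; no compatibility between different pairs is needed, and for each $t$ the resulting family is automatically a unitary cocycle.

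To produce these paths, I would fix $v,w$ and form the unitary operator $W_{1,2}(v,w)V_{1,2}(v,w)^*$ on the finite-dimensional Hilbert space $\C^{vE^1_1E^1_2 w}$. Since $\mathcal{U}(\C^{vE^1_1E^1_2 w})$ is path-connected (every unitary on a finite-dimensional Hilbert space has the form $\exp(iH)$ for a self-adjoint $H$, by the spectral theorem), we may pick a self-adjoint operator $H_{v,w}$ on $\C^{vE^1_1E^1_2 w}$ with $\exp(iH_{v,w}) = W_{1,2}(v,w)V_{1,2}(v,w)^*$ and set
\[
U^t_{1,2}(v,w) := \exp(itH_{v,w})\,V_{1,2}(v,w) \in \mathcal{U}(\C^{vE^1_2E^1_1 w}, \C^{vE^1_1E^1_2 w}).
\]
Then $U^0_{1,2}(v,w) = V_{1,2}(v,w)$, $U^1_{1,2}(v,w) = W_{1,2}(v,w)$, and $t \mapsto U^t_{1,2}(v,w)$ is norm-continuous because $t \mapsto \exp(itH_{v,w})$ is. Writing $U^t := \{U^t_{1,2}(v,w) : v,w \in E^0\}$, the first paragraph shows that $\{U^t : t \in [0,1]\}$ is a continuous path of unitary cocycles for $E$ with $U^0 = V$ and $U^1 = W$.

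There is no serious obstacle in this argument: the entire content of the statement is that for $k = 2$ the constraint~\eqref{eq:cocycle id} is vacuous, so that the space of unitary cocycles is essentially a (possibly infinite) product of copies of the path-connected groups $\mathcal{U}(\C^{vE^1_1E^1_2 w})$. The only point that deserves an explicit line is the observation that a path built coordinatewise as above stays inside the set of unitary cocycles, which is immediate from Remark~\ref{rmk:k=2}; everything else is a routine continuity check.
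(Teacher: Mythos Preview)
Your proposal is correct and follows essentially the same approach as the paper: both arguments observe via Remark~\ref{rmk:k=2} that the cocycle condition is vacuous for $k=2$, then choose for each pair $(v,w)$ independently a continuous path in the path-connected finite-dimensional unitary group. The only difference is cosmetic---you write down an explicit path via $\exp(itH_{v,w})$, whereas the paper simply invokes path-connectedness of $\mathcal{U}_n$.
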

\begin{proof}
Fix $v,w \in E^0$. The space
\[
    \mathcal{U}(\C^{vE^1_jE^1_iw}, \C^{vE^1_iE^1_jw})
\]
is homeomorphic to the finite-dimensional unitary group $\mathcal{U}_{|v\Lambda^{(1,1)}w|}$, and
therefore path-connected. So we can choose a continuous path $U^t_{1,2}(v,w)$ in
$\mathcal{U}(\C^{vE^1_jE^1_iw}, \C^{vE^1_iE^1_jw})$ such that $U^0_{1,2}(v,w) = V_{1,2}(v,w)$
and such that $U^1_{1,2}(v,w) = W_{1,2}(v,w)$. Since $k = 2$, the resulting isomorphisms
$U^t_{i,j} : X(E_j) \otimes_A X(E_i) \to X(E_i) \otimes_A X(E_j)$ indexed by $1 \le i < j \le 2$
all vacuously satisfy the condition~\eqref{eq:cocycle id} (see Remark~\ref{rmk:k=2}). So the
$U^t_{i,j}(v,w)$ constitute a continuous path of unitary cocycles for $E$.
\end{proof}

We can now prove the main theorem of the section.

\begin{proof}[Proof of Theorem~\ref{thm:twisted 2-graph K-th}]
For $i = 1,2$ let $E_i$ be the graph $(\Lambda^0, \Lambda^{e_i}, r, s)$.
Proposition~\ref{prp:k-graph to cocycle} shows that there are product systems $X^{\Lambda,\phi}$
and $X^{\Gamma,\psi}$ over $\N^2$ with coefficient algebra $C_0(E^0)$ such that
$X^{\Lambda,\phi}_{e_i} \cong X^{\Gamma,\psi}_{e_i} \cong X(E_i)$ for $i = 1,2$ and such that
$\mathcal{NO}_{X^{\Lambda,\phi}} \cong C^*_\phi(\Lambda)$ and $\mathcal{NO}_{X^{\Gamma,\psi}}
\cong C^*_\psi(\Gamma)$. By Lemma~\ref{lem:cocycle->prodsys}, there are unitary cocycles
$U^{\Lambda, \phi}, U^{\Gamma, \psi}$ for $E = (E_1, E_2)$ such that $X^{\Lambda, \phi} \cong
X_{U^{\Lambda, \phi}}$ and $X^{\Gamma, \psi} \cong X_{U^{\Gamma, \psi}} $.
Proposition~\ref{prp:unitary cocycles connected k=2} then shows that there is a continuous path of
unitaries connecting $U^{\Lambda, \phi}$ and $ U^{\Gamma, \psi}$. Hence
Corollary~\ref{cor:connected->K-theory} gives the result.
\end{proof}

\begin{rmk}
Let $E = (E_1, E_2)$ be a $2$-skeleton. By \cite[Section~6]{kp2} (see also \cite[Remark~2.3]{fs}),
any range-and-source-preserving bijection $E^1_2E^1_1 \to E^1_1E^1_2$ determines a $2$-graph with
skeleton $E$. In particular, there is at least one such $2$-graph $\Lambda$. Let $M_1$ and $M_2$
be the $E^0 \times E^0$ adjacency matrices of $E_1$ and $E_2$; that is $M_i(v,w) = |v E^1_i w|$.
We regard the transpose matrices $M^T_i$ as homomorphisms $M^T_i : \Z E^0 \to \Z E^0$, and then we
obtain homomorphisms
\[
\big(1 - M^t_1, 1 - M^t_2\big) : \Z E^0 \oplus \Z E^0 \to \Z E^0\quad\text{ and }\quad
\left(\begin{matrix} M^t_2 - 1 \\ 1 - M^t_1\end{matrix}\right) : \Z E^0 \to \Z E^0 \oplus \Z E^0
\]
Evans' calculation \cite[Proposition~3.16]{Evans} of the $K$-theory of the $C^*$-algebra of a
row-finite $2$-graph shows that
\begin{equation}\label{eq:K-th}
\begin{split}
K_0(C^*(\Lambda)) &\cong \operatorname{coker}\big(1 - M^t_1, 1 - M^t_2\big) \oplus \ker\left(\begin{matrix} M^t_2 - 1 \\ 1 - M^t_1\end{matrix}\right),\quad\text{ and}\\
K_1(C^*(\Lambda)) &\cong \ker\big(1 - M^t_1, 1 - M^t_2\big)/\operatorname{image}\left(\begin{matrix} M^t_2 - 1 \\ 1 - M^t_1\end{matrix}\right).
\end{split}
\end{equation}
Combining this with Proposition~\ref{prp:k-graph to cocycle} and Theorem~\ref{thm:twisted 2-graph
K-th}, we deduce that if $\Gamma$ is any 2-graph with skeleton $E$ and $\phi$ is any 2-cocycle on
$\Gamma$, then $K_*(C^*_\phi(\Gamma) \cong K_*(C^*(\Lambda))$ is given by the
formulas~\eqref{eq:K-th}.
\end{rmk}

We can also say a little about the situation of single-vertex $k$-graphs. Although we do not
have an entirely satisfactory result in this situation, we indicate what we can say since there
has been some interest (see Corollary~5.8, Remark~5.9, and the following paragraph of
\cite{BOS}) in deciding whether the $K$-theory of the $C^*$-algebra of a $1$-vertex $k$-graph is
independent of the factorisation rules.

Fix $k \ge 0$ and integers $n_1, \dots, n_k \ge 1$. Suppose that, for each $1 \le i < j \le k$,
we have a unitary transformation $U_{i,j} : \C^{n_j} \otimes \C^{n_i} \to \C^{n_i} \otimes
\C^{n_j}$. We say that the system $U_{i,j}$ is a \emph{unitary cocycle for $(n_1,\dots, n_k)$}
if, for all $1 \le i < j < l \le k$, we have
\begin{equation}
\label{eq:cocycle-mcs}
(U_{i,j} \otimes 1_{n_l})(1_{n_j} \otimes U_{i,l})(U_{j,l} \otimes 1_{n_i})
    = (1_{n_i} \otimes U_{j,l})(U_{i,l} \otimes 1_{n_j})(1_{n_l} \otimes U_{i,j}).
\end{equation}
A {\em continuous path of unitary cocycles for $(n_1, \ldots, n_k)$} is a family $\{U^t_{i,j}:
t\in [0,1], 1 \leq i < j \leq k \}$ of unitary cocycles for $(n_1, \ldots, n_k)$ such that the map
$t \mapsto U^t_{i,j}$ is continuous for each $1 \leq i < j \leq k$. These definitions are the
translation of Definitions \ref{def:unitary cocycle}~and~\ref{def:path of unitaries} to the
simpler setting of $k$-skeletons with one vertex and $n_i$ edges of colour $i$.

\begin{example}\label{ex:k-graph unitary cocycle}
Let $\Lambda$ be a $k$-graph with a single vertex $v$ and satisfying $|\Lambda^{e_i}| = n_i$ for
$1 \le i \le k$. Let $\phi : \bigsqcup_{i \not= j} \Lambda^{e_i + e_j} \to \T$ be a cubical
$2$-cocycle. Identifying $\C^{n_i} \otimes \C^{n_j}$ with $\C^{\Lambda^{e_i}\Lambda^{e_j}}$ for
all $i \not= j$, we obtain from Proposition~\ref{prp:k-graph to cocycle} a unitary cocycle
$U^{\Lambda,\phi}$ for $(n_1, \dots, n_k)$.
\end{example}

\begin{cor} Fix integers $ k \ge 0$ and $n_1, \ldots, n_k \geq 1$.
\begin{enumerate}
\item Let $\Lambda_1$ and $\Lambda_2$ be single-vertex $k$-graphs such that $|\Lambda^{e_i}_1| =
    |\Lambda^{e_i}_2| =: n_i$ for all $i$. If there is a continuous path of unitary cocycles for
    $(n_1, \dots, n_k)$ from $U^{\Lambda_1, 1}$ to $U^{\Lambda_2, 1}$, then $K_*(C^*(\Lambda_1))
    \cong K_*(C^*(\Lambda_2))$.
\item If the space of all unitary cocycles for $(n_1, \dots, n_k)$ is path-connected, then we
    have $K_*(C^*(\Lambda_1, c_1)) \cong K_*(C^*(\Lambda_2, c_2))$ for any single vertex
    $k$-graphs $\Lambda_i$ with $|\Lambda_i^{e_j}| = n_j$ and any categorical 2-cocycles $c_i$
    on $\Lambda_i$.
\end{enumerate}
\end{cor}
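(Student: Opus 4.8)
The plan is to observe that both statements are special cases of Corollary~\ref{cor:connected->K-theory}, obtained by taking $E^0$ to be a single point. For each $i$, let $E_i$ be the directed graph with one vertex $v$ and $n_i$ loops, and put $E = (E_1,\dots,E_k)$; since $|v E^1_i E^1_j v| = n_i n_j = |v E^1_j E^1_i v|$, this $E$ is a $k$-skeleton, and with $A = C_0(E^0) = \C$ the graph bimodule $X(E_i)$ is just $\C^{n_i}$, so that $X(E_i)\otimes_A X(E_j) = \C^{n_i}\otimes\C^{n_j}$. Under these identifications the map produced by Lemma~\ref{lem:U's induce iso} sends a family $\{U_{i,j}\}$ to itself, and the cocycle identity~\eqref{eq:cocycle id} of Definition~\ref{def:unitary cocycle} becomes exactly~\eqref{eq:cocycle-mcs}; likewise a continuous path of unitary cocycles for $(n_1,\dots,n_k)$ is precisely a continuous path of unitary cocycles for $E$ in the sense used earlier. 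This dictionary is the only part of the argument that requires any care, and it is purely formal.

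Granting it, I would prove (1) as follows: by Example~\ref{ex:k-graph unitary cocycle} applied with the trivial cubical cocycle (equivalently Proposition~\ref{prp:k-graph to cocycle}), for $l = 1,2$ we have $C^*(\Lambda_l) = C^*_1(\Lambda_l) \cong \mathcal{NO}_{X_{U^{\Lambda_l,1}}}$, where $U^{\Lambda_l,1}$ is, via the identification above, a unitary cocycle for $E$. A continuous path of unitary cocycles for $(n_1,\dots,n_k)$ from $U^{\Lambda_1,1}$ to $U^{\Lambda_2,1}$ is a continuous path of unitary cocycles for $E$, so Corollary~\ref{cor:connected->K-theory} gives $K_*(C^*(\Lambda_1)) \cong K_*(\mathcal{NO}_{X_{U^{\Lambda_1,1}}}) \cong K_*(\mathcal{NO}_{X_{U^{\Lambda_2,1}}}) \cong K_*(C^*(\Lambda_2))$.

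For (2), I would first invoke \cite[Theorem~4.15 and Proposition~5.7]{KPS4} to obtain, for $l = 1,2$, a cubical cocycle $\phi_l$ on $\Lambda_l$ with $C^*(\Lambda_l, c_l) \cong C^*_{\phi_l}(\Lambda_l)$. Then Proposition~\ref{prp:k-graph to cocycle} (via Example~\ref{ex:k-graph unitary cocycle}) yields unitary cocycles $U^{\Lambda_l,\phi_l}$ for $(n_1,\dots,n_k)$ with $C^*_{\phi_l}(\Lambda_l) \cong \mathcal{NO}_{X_{U^{\Lambda_l,\phi_l}}}$. Path-connectedness of the space of all unitary cocycles for $(n_1,\dots,n_k)$ supplies a continuous path from $U^{\Lambda_1,\phi_1}$ to $U^{\Lambda_2,\phi_2}$; viewing it as a continuous path of unitary cocycles for $E$ and applying Corollary~\ref{cor:connected->K-theory} gives $K_*(C^*(\Lambda_1,c_1)) \cong K_*(C^*(\Lambda_2,c_2))$.

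There is no analytic obstacle here: everything of substance lives in Corollary~\ref{cor:connected->K-theory} and Proposition~\ref{prp:k-graph to cocycle}, and the proof is just an unwinding of definitions. The mild point to keep an eye on is the degenerate range of $k$: for $k \le 2$ the identity~\eqref{eq:cocycle-mcs} is vacuous and the space of unitary cocycles is automatically path-connected (it is a finite product of unitary groups), so part~(2) contains genuinely new content only for $k \ge 3$, where the connecting path must be chosen so as to remain among the solutions of~\eqref{eq:cocycle-mcs}.
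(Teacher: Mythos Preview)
Your proof is correct and takes the same approach as the paper, which simply states that the corollary follows immediately from Corollary~\ref{cor:connected->K-theory}. You have merely made explicit the identifications and the invocation of Proposition~\ref{prp:k-graph to cocycle} that the paper leaves implicit.
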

\begin{proof}
This follows immediately from Corollary~\ref{cor:connected->K-theory}.
\end{proof}

Unfortunately, we do not know whether the space of unitary cocycles for $(n_1, \ldots, n_k)$ is connected: certainly
given any two unitary cocycles $U$ and $V$, we can find continuous paths $W^t_{i,j}$ from
$U_{i,j}$ to $V_{i,j}$ for all $i,j$, but it is not at all clear that these can be chosen so
that, for each $t$, $\{ W^t_{i,j}: 1\leq i < j \leq k\}$ satisfies the cocycle condition \eqref{eq:cocycle-mcs}. This seems closely related to the question of whether
the space of solutions to the Yang--Baxter equation is path-connected (see \cite{Yang}).

\begin{rmk}
More generally, if $E = (E_1, \dots, E_k)$ is a $k$-skeleton such that the space of unitary
cocycles for $E$ is path-connected, then for any two $k$-graphs $\Lambda$ and $\Gamma$ with common
skeleton $\Lambda^{e_i} = E_i = \Gamma^{e_i}$, and any cubical cocycles $\phi$ for $\Lambda$ and
$\psi$ for $\Gamma$, we have $K_*(C^*_{\phi}(\Lambda)) \cong K_*(C^*_{\psi}(\Gamma))$. Again,
since each $\C^{v E_j E_i w}$ is finite dimensional, for each $u,v,i,j$ we can find a continuous
path of unitaries from $U^{\Lambda, \phi}_{i,j}(u,v)$ to $U^{\Gamma, \phi}_{i,j}(u,v)$, but it is
not at all clear that these can be chosen to satisfy the cocycle identity~\eqref{eq:cocycle id}.
\end{rmk}

\end{document}